\documentclass{my_m2an} 

\usepackage{loc}
\usepackage{lop}

\newcounter{cst}
\newcommand{\ctel}[1]{C_{\refstepcounter{cst}\label{#1}\thecst}}
\newcommand{\cter}[1]{C_{\ref{#1}}}

\title{A staggered scheme for the compressible Euler equations on general 3D meshes}
\date{}
\begin{document}

\author{Aubin Brunel}
\address{Aix-Marseille University, CNRS, France (aubin.brunel@univ-amu.fr)}
\author{Rapha\`ele Herbin}
\address{Aix-Marseille University, CNRS, France (raphaele.herbin@univ-amu.fr)}
\author{Jean-Claude Latch\'e}
\address{Institut de S\^uret\'e et de Radioprotection Nucl\'eaire (IRSN), France (jean-claude.latche]@irsn.fr)}

\begin{abstract}
    We address here the discretization of the momentum convection operator for fluid flow simulations on 2D triangular and quadrangular meshes and 3D polyhedral meshes containing hexahedra, tetrahedra, prisms and pyramids.
    The finite volume scheme that we use for the full Euler equations is based on a staggered discretization: the density unknowns are associated with a primal mesh, whereas the velocity unknowns are associated with a "fictive" dual mesh.
    Accordingly, the convection operator of the mass balance equation is derived on the primal mesh, while the the convection operator of the momentum balance equation is discretized on the dual mesh.
    To avoid any hazardous interpolation of the unknowns on  a possibly ill-defined dual mesh, the mass fluxes of the momentum convection operator are computed from the mass fluxes of the mass balance equation, so as to ensure the stability of the resulting operator.
    A coherent reconstruction of these dual fluxes is possible, based only on the kind of considered polygonal or polyhedral cell, and not on each cell itself.
    Moreover, we show that this process still yields a consistent convection operator in the Lax-Wendroff sense, that is, if a sequence of piecewise constant functions is supposed to converge to a a given limit, then the weak form of the corresponding discrete convection operator converges to the weak form of the continuous operator applied to this limit.
    The derived discrete convection operator applies to  both constant and variable density flows and may thus be implemented in a scheme for incompressible or compressible flows.
    Numerical tests are performed for the Euler equations on several types of mesh, including hybrid meshes, and show the excellent performance of the method.
\end{abstract}

\keywords{Staggered discretizations, Momentum convection operator, Finite volume, Euler equations, Compressible flows} 
\subjclass{65M08, 76M12}

\maketitle

%

We address in this paper a numerical scheme for the Euler equations, which read:
\begin{subequations}\label{eu:eq:pb}
    \begin{align}
    \label{eq:cont_mass_balance} &
    \partial_t \rho + \dive( \rho\, \bfu) = 0,
    \\[1ex] 
    \label{eu:eq:pb_mom} &
    \partial_t (\rho\, u_i) + \dive(\rho\, u_i \, \bfu) + \partial_i p= 0, \quad 1 \leq i \leq d,
    \\[1ex] 
    \label{eu:eq:pb_Etot} &
    \partial_t (\rho\, E) + \dive(\rho \, E \, \bfu) + \dive ( p \, \bfu )=0,
    \\ 
    \label{eu:eq:pb_etat} &
     p=(\gamma-1)\, \rho\, e, \qquad E=\frac 1 2|\bfu|^2+e,
    \end{align} 
\end{subequations}
where the quantities $\bfu$, $\rho$, $p$, $E$ and $e$ refer respectively to the velocity of the fluid, its density, its pressure, the total energy and the internal energy, and $\gamma$ is a coefficient specific to the fluid and is supposed strictly greater than $1$.
The problem is supposed to be posed over a spatial domain $\Omega$ and on a time interval $[0,T]$, where $\Omega$ is an open bounded connected subset of $\xR^d$ with $1 \leq d \leq 3$.
The system is supplemented by the following initial conditions:
\begin{equation}\label{eqdef:ci}
    \rho(\bfx,0) = \rho_0(\bfx), \;  \bfu(\bfx,0) = \bfu_0(\bfx), \mbox{ with } \rho_0 \in L^\infty(\Omega),  \bfu_0 \in L^\infty(\Omega)^d,
\end{equation}
and suitable boundary conditions.

In a previous paper  \cite{gas-18-mus} co-signed by two of the above authors, a staggered in space and segregated in time scheme, involving only explicit steps, was developed to approximate the solutions of the Euler equations on general simplicial or quadrilateral/hexahedral meshes.
Our aim in the present work is twofold: first we generalize the scheme to prismatic and pyramidal meshes and give a precise definition of the convective fluxes for these cells; second, we give a proof of the Lax-Wendroff consistency of the scheme for general meshes, in the sense that if a sequence of approximate solutions is assumed to converge to a certain limit as the time and space steps tend to zero, then this limit is necessarily a weak solution of the Euler equations.

Let us first recall the essential features of the proposed numerical scheme.
\begin{itemize}
    \item  The scheme features a staggered arrangement of the unknowns: the scalar variables (density, pressure) are approximated by piecewise constant functions over the cells while the velocity is approximated at the faces of the cells.

    \item The equations are discretized by a finite volume scheme. 
 
    \item Although not fully explicit, the scheme is segregated and each step is explicit, in the sense that the balance equations are solved successively and do not require any linear system solver.

    \item  The scheme solves the internal energy balance rather than the total energy balance, and thus conserves the positivity of the internal energy;
    a corrective term is added in this equation in order to avoid wrong shock solutions.
    
    \item Upwinding or more precise procedures are avalaible for the convection fluxes, which ensures the positivity of the density, internal energy and pressure.
\end{itemize}

The scheme is related to the family of flux splitting schemes of the references \cite{ste-81-flux,lio-93-new,zha-93-num,lio-06-seq,tor-12-flux}. 
However it differs from them because of the use of staggered discretization which has the advantage of providing a discrete inf-sup stability condition, and by the use of the internal energy equation. 
Moreover, the pressure gradient is discretized as the dual of the velocity divergence, which, coupled with the preservation of the positivity of the density, yields a discrete analogue of the conservation of the total energy.
The continuous mass balance equation \eqref{eq:cont_mass_balance}
is discretized on a primal mesh, whereas the momentum convection operator for the $i$-th component of the velocity (with $1 \leq i \leq d$, where $d$ is the dimension of the problem), given by
\begin{equation} \label{eq:cont_conv}
    (\mathcal{C} u)_i := \partial_t(\rho \, u_i)+\dive(\rho \, u_i \, \bfu),
\end{equation} 
is discretized on the dual mesh.
As a consequence, the mass fluxes are first defined on the primal mesh and the densities and mass fluxes are reconstructed on the dual mesh from the primal mesh densities and mass fluxes respectively, so as to ensure that a mass balance holds on the dual cells; this is a crucial step to ensure the $L^2$-stability of the scheme, see e.g. \cite{gal-08-unc}.

\medskip 

The first goal of this paper is to propose a general reconstruction method of these dual cell mass fluxes (and thus, of the momentum convection operator) on a wide range of cells, including prismatic and pyramidal cells in 3D, which are often encountered in industrial meshes. 
Indeed, industrial 3D meshes are often based on both hexahedral and tetrahedral elements cells, and the usual way to connect a tetraheda to a heaxhedra is to use pyramids
or prisms \cite{mav-97-uni,bis-98-tet,owe-00-for,sor-2003-mul,bau-14-fron}.
The scientific software CALIF3S \cite{califs} that is routinely used at IRSN for nuclear safety studies supports the use of  hybrid tetrahedral-hexahedral-prismatic-pyramidal meshes.

In the present work, we generalize the reconstruction of the dual fluxes from the primal fluxes which was introduced for simplicial and hexahedral meshes (see e.g. \cite{gas-18-mus,her-14-som}) to these hybrid meshes, and the requirement of a local mass balance equation leads to the problem of the resolution linear system.
The resulting system that needs to be solved for this reconstruction  may be underdetermined;  a procedure is constructed in order to find a solution of the system for a given polygon or polyhedron, regardless of its possible distorsion or anisotropy.

\medskip

The construction of the dual fluxes is thus based on a set of algebraic equations, rather than on integration on some well defined control volume; this might seem unorthodox in the framework of the finite volume method.
However, to support the validity of this method, we give a general consistency result: this is the second goal of the paper.
Indeed, we show that the derived discrete convection operator is weakly consistent in the Lax-Wendroff sense, \ie~ that its weak form tends to the weak form of the continuous convection operator as the  space and time steps both tend to 0.
 
\medskip
 
The paper is organized as follows. 
In section \ref{sec:mesh}, we give the time and space discretizations.
The mesh definition is given such as to take into account several kinds of grid cells in two or three space dimensions. 
Section \ref{sec:scheme} is devoted to the definition of the scheme, with special emphasis on the construction of the convection operator and its link with the mass balance equation, which is the subject of Section \ref{sec:vect_conv_op}.
In the case of pyramidal and prismatic cells, the linear systems satisfied by the dual fluxes are underdetermined; we give the  specific choice of a solution that is implemented in the code. 
The scheme is shown to be consistent in the Lax-Wendroff sense in section \ref{sec:consis}.
The behaviour of the numerical scheme on some  test cases is investigated in section \ref{sec:num}.
Finally, some lemmas needed for the Lax-Wendroff consistency are recalled in the appendix.  
 
\section{Space and time discretization} \label{sec:mesh}

Due to the staggered nature of the scheme, the discretization consists in a primal mesh and a dual mesh that is derived from the primal one.
We therefore first define the primal mesh $\mesh$, which is obtained by splitting the domain $\Omega$ into a finite family of disjoint polygons (triangles and quadrangles) when $d=2$ and polyhedra (hexahedra, tetrahedra, prisms and pyramids) when $d=3$.
Following, we will refer to a primal element as a control volume or a cell.
The set of the faces (if $d=3$, or edges if $d=2$)  of the mesh is denoted by $\edges$.
It is split into $\edges = \edgesint \cup \edgesext$, where $\edgesext$ is the set of external faces (or edges) and $\edgesint$ is the set of internal faces (or edges), that we define as follows. 
A face (or edge) $\edge \in \edges$ is said to be an external face (or edge), and thus $\edge \in \edgesext$, if it is part of the boundary of the domain (\ie\ $\edge \subset \partial \Omega$), whereas $\edge \in \edges$ is said to be an internal face (or edge), and thus $\edge \in \edgesint$, if there exists $(K,L) \in \mesh^2$ with $K \neq L$ such that $\overline K \cap \overline L = \edge$.
Such a face (or edge) is thus denoted $\edge = K | L$.
Moreover, for $K \in \mesh$ and $\edge \in \edges$, we denote by $|K|$ the measure of $K$ and $|\edge|$ the $(d-1)$-measure of $\edge$. 

Then, the dual mesh is constructed as follows: for a regular polygon or a regular polyhedron $K \in \mesh$, we denote by $\bfx_K$ the mass center of $K$ and we construct $D_{K,\edge}$ as the cone with basis $\edge$ and with vertex $\bfx_K$; this definition is extended to a general cell $K$, by supposing that $K$ is split in the same number of sub-cells (the geometry of which does not need to be specified) and with the same connectivity. 
We can now define for each $\edge \in \edges$ its associated dual cells $D_\edge$.
When $\edge \in \edgesint$ with $\edge = K|L$, we define $D_\edge := D_{K,\edge} \cup D_{L,\edge}$; if $\edge \in \edgesext \cap \edges(K)$, then we have $D_{\edge} := D_{K,\edge}$.
We then denote by $\edgesd(D_\edge)$ the set of dual faces (or edges) of $D_\edge$, and by $\edged=D_\edge|D_{\edge'}$ the face (or edge) separating two dual cells $D_\edge$ and $D_{\edge'}$. 

Next, we associate the unknowns associated with the scalar variable, such as the pressure and the density for instance, the cells of the primal mesh $\mesh$, and they are denoted in those instances by $p_K$ and $\rho_K$ for a cell $K \in \mesh$.
On the other hand, the degrees of freedom for the velocity are linked to the dual mesh faces and are denoted $\bfu_\edge= (u_{\edge,1},\ldots,u_{\edge,d})$ for an edge $\edge \in \edges$.
All the components of the velocity are thus approximated on each face of the mesh, and their degrees of freedom are identified to the mean value of the velocity component over the face. 

An example of the discretization with a few control volumes is given on Figure \ref{fig:mesh}.

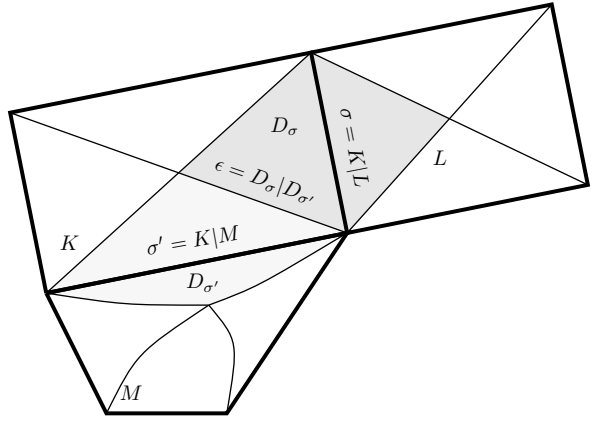
\begin{figure}
\begin{center}
\begin{tikzpicture}[scale = 0.8, every node/.style={scale=0.8}]
   \fill[color=white!90!black](3.2,3) -- (6,2) -- (7.7,3.9) -- (5.4,5) ;
   \draw (4.6,3.5) node[anchor = south west] {{$D_\edge$}} ;
   \fill [color=white!97!black] (3.2,3) -- (6,2) .. controls (4.5, 1.1) .. (3.7,0.8) .. controls (2.3,0.8) .. (1,1) ;
   \draw (3.2,0.9) node[anchor = south west] {{$D_{\edge'}$}} ;
   \draw (2.6,1.4) node[rotate = 10, anchor = south west] {$\edge'=K|M$} ;
   \draw [color=black, line width=1.5pt] (1,1) -- (6,2) -- (5.4,5) -- (0.4,4) -- (1,1) ;
   \draw [color=black, line width=0.5pt] (1,1) -- (5.4,5) ;
   \draw [color=black, line width=0.5pt] (6,2) -- (0.4,4) ;
   \draw (1.1,1.6) node[anchor = south west] {{$K$}} ;
   \draw [color=black, line width=1.5pt] (1,1) -- (6,2) -- (4,-1) -- (2,-1) -- (1,1) ;
   \draw[color=black, line width=0.5pt] (4,-1) .. controls (4.2,0.2) .. (3.7,0.8) ;
   \draw[color=black, line width=0.5pt] (2,-1) .. controls (2.5,0.) .. (3.7,0.8) ;
   \draw[color=black, line width=0.5pt] (1,1) .. controls (2.3,0.8) .. (3.7,0.8) ;
   \draw[color=black, line width=0.5pt] (3.7,0.8) .. controls (4.5, 1.1) .. (6,2) ;
   \draw (2.1,-0.9) node[anchor = south west] {{$M$}} ;
   \draw [color=black, line width=1.5pt] (6,2) -- (10,2.8) -- (9.4,5.8) -- (5.4,5) ;
   \draw [color=black, line width=0.5pt] (6,2) -- (9.4,5.8) ;
   \draw [color=black, line width=0.5pt] (10,2.8) -- (5.4,5) ;
   \draw (7.3,3) node[anchor = south west] {{$L$}} ;
   \draw (5.65,4.15) node[rotate = -79, anchor = south west] {$\edge=K|L$} ;
   \draw (3.6,2.9) node[rotate = -19, anchor = south west] {$\edged=D_\edge|D_{\edge'}$} ;
\end{tikzpicture}
\caption{Primal and dual meshes for the Rannacher-Turek elements.}
\label{fig:mesh}
\end{center}
\end{figure}

Finally, a constant time step denoted by $\deltat$ is used for the time discretization, and the discrete solution will thus be computed at each time $t^n$ with $t^n := n\, \deltat$, for n varying between 0 and $N := \left\lfloor \frac{T}{\delta t} \right\rfloor$. 
An index $n$ is then used to refer to the time step, that is to say that the unknows will be denoted by $(p^n_K)_{K\in\mesh,\ 0\leq n \leq N}$, $(\rho^n_K)_{K\in\mesh,\ 0\leq n \leq N}$ and $(\bfu_\edge^n)_{\edge \in \edge,\ 0\leq n \leq N }$.

\section{The numerical scheme} \label{sec:scheme}

In the following, we will propose a scheme based on the internal energy balance formulation of the Euler equations, which reads:
\begin{align}\label{eu:eq:pb_Eint}
    \partial_t (\rho e) + \dive(\rho e \bfu)+ p\, \dive \bfu =0,
\end{align}
rather than the total energy balance (Equation \eqref{eu:eq:pb_Etot}).
The former may be derived from the latter thanks to a kinetic energy identity obtained by taking the inner product of Equation \eqref{eu:eq:pb_mom} with the velocity.
Such a choice is motivated by the following reasons.
First, a well-thought approximation of the convection term of Equation \eqref{eu:eq:pb_Eint} leads to a conservation of the positivity of the internal energy, which seems difficult to obtain by a direct discretization of the the total energy balance.
Second, on a staggered discretization, the total energy is a function of quantities given on both the primal and the dual mesh.
A discretization of this equation may thus yield an awkward combination of those quantities, and this difficulty might be avoided by working directly with the internal energy balance.

The scheme thus takes the following form; it features several discrete operators, defined either on the primal mesh or on the dual meshes associated to the components of the velocity; these are defined in the following paragraphs.
\begin{subequations}\label{scheme}
\begin{align}
    & \mbox{\emph{Initialization:}} \nonumber \\
    &\hspace{2ex} 
    \rho^0_K =  \frac 1 {|K|}  \int_K  \rho_{0}(\bfx) \dx, \qquad\bfu^0_{\edge} =  \frac 1 {|D_\edge|}   \int_\Ds  \bfu_{0}(\bfx) \dx   \label{scheme:init}
    \\[2ex]
    \nonumber & \mbox{\emph{Solve for }}  n \geq 0,
    \\
    \displaystyle \label{eu:eq:scheme_mass} 
    & \forall K \in \mesh, 
    \qquad\dfrac{1}{\deltat}(\rho^{n+1}_K-\rho^n_K) + \dive_K(\rho^n \bfu^n)=0,
    \displaybreak[1] 
    \\ 
    \label{eu:eq:scheme_Eint} 
    &
    \begin{multlined}[b][10.5cm] 
        \forall K \in \mesh, \qquad \dfrac{1}{\deltat}(\rho^{n+1}_K e^{n+1}_K-\rho^n_K e^n_K) + \dive_K(\rho^n e^n \bfu^n )
        + p^n_K \, \dive_K \bfu^n = S^n_K,
    \end{multlined}
    \displaybreak[1] 
    \\[2ex] 
    \label{eu:eq:scheme_eos} 
    &\forall K \in \mesh, 
    \qquad   p^{n+1}_K=(\gamma-1)\ \rho^{n+1}_K\ e^{n+1}_K,
    \displaybreak[1] 
    \\[2ex] 
    \displaystyle \label{eu:eq:scheme_mom}  
    & 
    \begin{multlined}[b][10.5cm] 
        \forall 1 \leq i \leq d,\ \forall \edge \in \edges, \qquad
        \dfrac{1}{\deltat}(\rho^{n+1}_\Ds u_{i,\edge}^{n+1} -\rho^n_\Ds u_{i,\edge}^{n}) + \dive_\edge(\rho^n u_i^n \bfu^n) + (\eth_i p)^{n+1}_\edge =0.
    \end{multlined} 
\end{align}
\end{subequations}

\subsection{Discrete mass balance (Eqn. \eqref{eu:eq:scheme_mass})}
Let us first address the discretization of the mass balance equation \eqref{eq:cont_mass_balance}. 
The discrete mass balance \eqref{eu:eq:scheme_mass} is of finite volume type, and is set out on the primal cells, since the density unknowns are associated with these cells.
The divergence term is thus obtained by defining primal numerical fluxes $F^n_{K,\edge}$ for each time step $t^n$ across each faces $\edge$ outward of a cell $K \in \mesh$ as 
\[
\forall \edge = K|L \in \edgesint, \quad F^n_{K,\edge}=|\edge|\ \rho_\edge^n \bfu_\edge^n \cdot \bfn_{K,\edge},
\]
with $\bfn_{K,\edge}$ the normal vector to the face $\edge$ outward $K$ and $\rho_\edge^n$ a discretization of the density at the face.
Several choices are possible for this approximation at the face, as the upwind one, that is given by:
\begin{align*}
    \rho^n_\edge = \left| \begin{aligned} &
    \rho^n_K \qquad \hbox{if } \bfu^n_\edge \cdot \bfn_{K,\edge} \geq 0,
    \\[1ex] &
    \rho^n_L \qquad \hbox{otherwise.}
    \end{aligned}
\right.
\end{align*}
More precise techniques can also be derived, as for instance the MUSCL method of \cite{gas-18-mus}.
The divergence term of the discrete mass balance equation \eqref{eu:eq:pb_mom} then reads:
\begin{equation}\label{eq:disc_mass_balance}
    \dive_K(\rho^n \bfu^n) =\frac{1}{|K|}\sum_{\edge \in \edges(K)} F^n_{K,\edge}.
\end{equation}

\subsection{Discrete internal energy balance (Eqn. \eqref{eu:eq:scheme_Eint})}
We now address the discrete internal energy equation \eqref{eu:eq:scheme_Eint}.
The convection term reads
\begin{align*}
    \dive_K(\rho^n \bfu^n e^n) = \frac{1}{|K|}\sum_{\edge \in \edges(K)} F_{K,\edge}^n e^n_\edge,
\end{align*}
where the value at a face $\edge=K|L \in \edgesint$ of the internal energy may be once again obtained by the MUSCL approximation of \cite{gas-18-mus}, or by a simple upwind approximation, that is
\begin{align*}
    e^n_\edge = \left| \begin{aligned} &
    e^n_K \qquad \hbox{if } F_{K,\edge}^n \geq 0,
    \\[1ex] &
    e^n_L \qquad \hbox{otherwise.}
    \end{aligned}
\right.
\end{align*}
The divergence term is derived in the same fashion as the divergence term of the mass balance equation, and is thus given by:
\begin{align*}
    \dive_K \bfu^n = \frac{1}{|K|}\sum_{\edge \in \edges(K)} \bfu_\edge^n \cdot \bfn_{K,\edge}.
\end{align*}
Finally, the term $S^n_K$ is a corrective term which is added to ensure a kinetic energy stability result; its definition depends on the convection operator of the discrete momentum equation, or rather the technique used for the definition of the velocity on the dual faces.
We refer to \cite{gas-18-mus,her-18-cons} for its exact definition and for more details on its construction.

The values of the unknowns at the time step $n=0$ are given by an average of the initial data:
\begin{align}
    \text{For } 1\leq i \leq d, \ \forall \edge \in \edges, & \qquad u^0_{\edge,i}=\frac{1}{|D_\edge|}\int_{D_\edge} (\bfu_0(x))_idx,
\end{align}
where $\bfu_0$ is the initial data.

\subsection{Discrete momentum balance (Eqn. \eqref{eu:eq:scheme_mom})}
We now turn to the discrete momentum balance equation \eqref{eu:eq:pb_mom}.
The discrete derivative $(\eth_i p)^{n}_{\edge}$ is built at each face $\edge \in \edges$ and for all component $1 \leq i \leq d$, and is given as follows:
\begin{align}
    \forall \edge \in \edgesint, \edge = K|L, \qquad (\eth_i p)^{n}_{\edge}=\frac{|\edge|}{|D_\edge|}(p_L-p_K)\bfn_{K,\edge}\cdot\bfe^{(i)},
\end{align}
where $\bfe^{(i)}$ is the $i^{th}$ vector of the orthonormal basis of $\xR^d$, and $\bfn_{K,\edge}$ the normal vector to the face $\edge$ outward the cell $K$.

The discrete divergence operator $\dive_\edge(\rho^n u_i^n \bfu^n)$ is of finite-volume type, and, since the discretization is staggered, relies on the dual mesh.
It takes the general form:
\begin{equation}\label{eq:conv_vel}
    \mbox{for } 1 \leq i \leq d,\forall \edge \in \edges, \qquad
    \dive_\edge(\rho^n u_i^n \bfu^n) =  \frac{1}{|D_\edge|}\sum_{\edged \in \edgesd(D_\edge)} F^n_{\edge,\edged}\ u_{i,\edged}^n
\end{equation}
where $u_{\edged,i}^n$ is an approximation of $u_i$ over the face $\edged$ at time $t_n$, and $F^n_{\edge,\edged}$ is a mass flux leaving $D_\edge$ through the dual face $\edged$ at time $t^n$.

The expression of the velocity at the dual face, \ie of the quantity $u_{i,\edged}^n$ for $\edged = \edge|\edge'$ in Equation \eqref{eq:conv_vel} may be obtained thanks to an upwind method, that is :
\begin{align*}
    \mbox{for } 1 \leq i \leq d, \qquad u_{i,\edged}^n = \left|
        \begin{array}{ll}
            u_{i,\edge}^n & \text{if } F^n_{\edge,\edged} \geq 0 \\
            u_{i,\edge'}^n & \text{otherwise,} 
        \end{array}
    \right.
\end{align*} 
In order to enhance the accuracy of the scheme, one may also choose
an algebraic MUSCL technique, we refer to \cite{bru-22-mus} for details on such a procedure.

\medskip
The derivation of the quantities $\rho_\Ds^{n+1}$, $\rho_\Ds^n$ and $F^n_{\edge,\edged}$ is performed to ensure a finite volume mass balance over the dual cells:
\begin{equation}\label{eq:mass_D}
    \mbox{for } 1 \leq i \leq d,\ \forall \edge \in \edges, \qquad
    \frac{|D_\edge|}{\deltat} \ (\rho_{D_\edge}^{n+1}-\rho^n_{D_\edge})
    + \sum_{\edged\in \edgesd(D_\edge)} F^n_{\edge,\edged}=0.
\end{equation}
For $\edge$ in $\edgesint$ such that $\edge=K|L$, the approximate densities on the dual cell $D_\edge$ are given (at any time level) by the following weighted average:
\begin{equation} \label{eq:pd2}
    |D_\edge|\ \rho_\Ds= \xi_K^\edge|K|\ \rho_K + \xi_L^\edge|L|\ \rho_L, \qquad \mbox{with } \xi_K^\edge = \frac{|D_{K,\edge}|}{|K|}, \, K\in\mesh, ~\edge\in\edges(K).
\end{equation}
For a (half) diamond cell associated to an external face, the density is equal to the density in the adjacent primal cell.
Indeed, this identity is necessary to ensure a discrete kinetic energy balance, and the existence of a such balance equation is central to obtain consistent schemes (see \cite{her-14-som,her-18-cons,her-20-cons}).
The construction of the dual fluxes $F^n_{\edge,\edged}$ for different grid cells is the subject of the next section.

\section{Construction of the dual fluxes from the primal fluxes} \label{sec:vect_conv_op}

In the following, we drop the time exponent for clarity.
We first recall the conditions that must satisfy the set of dual fluxes $(F_{\edge,\edged})_{\edged\subset K}$, is computed by solving a linear system depending on the primal fluxes $(F_{K,\edge})_{\edge\in\edges(K)}$ appearing in the discrete mass balance, in order to obtain the stability of the resulting non linear convection operator.

\begin{definition}[Constraints of the dual fluxes \cite{ans-11-anl}]\label{def:dualfluxes}
The fluxes through the faces of the dual mesh are defined so as to satisfy the following three constraints:
\begin{itemize}
\item[(H1)] The discrete mass balance over the half-diamond cells is satisfied, in the following sense. 
For any primal cell $K$ in $\mesh$, the set $(\fluxd)_{\edged\subset K}$ of dual fluxes included in $K$ solves the following linear system
\begin{equation}\label{eq:F_syst}
    F_{K,\edge} + \sum_{\edged \in \edgesd(D_\sigma),\ \edged \subset K} F_{\edge,\edged}= \xi_K^\edge \ \sum_{\edge' \in \edges(K)} F_{K,\edge'}, \qquad \edge \in \edges(K).
\end{equation}
\item[(H2)] The dual fluxes are conservative, \ie~ for any dual face $\edged=D_\edge|D_\edge'$, we have $F_{\edge,\edged}=-F_{\edge',\edged}$.
\item[(H3)] The dual fluxes are bounded with respect to the primal fluxes $(F_{K,\edge})_{\edge \in \edges(K)}$ in the sense that there exists a universal constant real number $C$ such that:
\begin{align}\label{eq:F_bounded}
    |F_{\edge,\edged}| \leq C \ \max \,\left \lbrace |F_{K,\edge}|, \qquad \edge \in \edges(K) \right \rbrace,  K \in \mesh,\  \edge \in \edges(K),\  \edged \in \edgesd(D_\sigma),\ \edged \subset K .
\end{align}
\end{itemize}
\end{definition}

The assumptions (H1)-(H3) are sufficient to imply the consistency of the discrete convection operator, as we shall see later on.
Note however that the system of equations \eqref{eq:F_syst} is singular and has an infinite number of solutions;  the additional constraint \eqref{eq:F_bounded} is needed for stability purposes, but the system \eqref{eq:F_syst}-\eqref{eq:F_bounded} remains underdetermined.
The choice of a particular solution to this system depends on the type of grid cell; it is detailed in the next paragraphs.
Note that, since \eqref{eq:F_syst} is a linear system for the dual mass fluxes, a solution of \eqref{eq:F_syst} may be expressed as:
\begin{equation}
    F_{\edge,\edged} = \sum_{\edge' \in \edges(K)} \alpha_K^\edged F_{K,\edge'},\qquad \edge \in \edges(K),\
    \edged \in \edgesd(D_{\edge}) \mbox{ and } \edged \subset K,
\label{eqdef:dualflux}
\end{equation}
and the constraint \eqref{eq:F_bounded} amounts to requiring the coefficients $(\alpha_K^\edged)_{\edge,\edge'\in \edges(K)}$ to be bounded by a universal constant.
In practice, the coefficients used in our numerical experimentations and derived below satisfy $|\alpha_K^\edged|\leq 1$ for all $\edge,\edge'\in \edges(K)$ and for all $K\in\mesh$.

We are thus able to cope with a quite general definition of the diamond cells, since, up to now,  their volume itself is not specified.
As mentioned in Section \ref{sec:mesh}, in order to simplify the implementation, we however choose in practice to impose that the half-diamond cells of a primal cell $K$ all have the same measure:
\begin{equation}
  |D_{K,\edge}|=|K|/\mathrm{card}(\edges(K)). \label{equal-volume}
\end{equation}
 
Therefore, the real number $\xi_K^\edge$ in \eqref{eq:pd2} and \eqref{eq:F_syst} takes the following values: 
\[
 \xi_K^\edge = \begin{cases}
                 1/3 \mbox{ for 2D simplices,}\\
                 1/4 \mbox{ for 2D quadrangles and 3D simplices,}\\
                 1/5 \mbox{ for quadrangle-based pyramids and triangular prisms,}\\
                 1/6 \mbox{ for hexahedra,}
               \end{cases}
\]
As a consequence, the system \eqref{eq:F_syst} only depends on the shape of the cell $K$ under consideration.
We may thus consider a particular geometry for $K$, and find an expression for the coefficients $((\alpha_K^\edged)_{\edge,\edge'\in \edges(K)}$ which we  apply to all similar cells, thus automatically satisfying the constraint \eqref{eq:F_bounded}.

\subsection{The case of a geometric dual mesh}\label{sec:geom}

We first consider the case when the dual cells satisfying \eqref{equal-volume} can be geometrically built on a cell $K$ in the sense that an explicit definition of $D_{K,\edge}$ can be given for all $\edge \in \edges(K)$.
This is for instance the case when $K$ is a simplex, a square or a cube.
Let us consider a momentum field $\bfw$ with constant divergence, such that:
\[
    \int_\edge \bfw \cdot \bfn_{K,\edge} \dedge(\bfx) = F_{K,\edge}, \qquad \forall \edge \in \edges(K).
\]
Then an easy computation shows that the definition
\begin{align} \label{eq:dual_flux_geom}
    F_{\edge,\edged} = \int_\edged \bfw \cdot \bfn_{\edge,\edged} \dedge(\bfx),
\end{align}
where the unit vector normal to $\edged $ outward $D_\edge$ is denoted by $\bfn_{\edge,\edged}$, satisfies \eqref{eq:F_syst} (see \cite[Lemma 3.2]{ans-11-anl}).

For instance, suppose that $K$ is the reference square $K = [0,1]^2$.
Next, denote by $W,E,S,N$ the primal faces of $K$ and $F_W, F_E, F_S$ and $F_N$ their associated primal fluxes, as in Figure \ref{tab:quadrilateral_dual_fluxes}.
Then, it might be possible to choose the momentum field $\bfw$ as:
\begin{align*}
    \bfw(x,y) = 
    \begin{bmatrix}
        (1-x)(-F_W) + x F_E \\
        (1-y)(-F_S) + y F_N
    \end{bmatrix},
\end{align*}

Next, we need to examine the connectivity inside the cell $K$.
More precisely, for a given interface $\edge$ of $K$, $\edge \in \{W,E,S,N\}$, we first list all the interfaces $\edge' \in \{W,E,S,N\} \backslash \{ \edge \}$ such that $D_{K,\edge}$ and $D_{K,\edge'}$ are neighbouring dual cells.
Let us denote by $F_{\edge,\edged}$ denote the dual flux coming from the diamond cell $D_{K,\edge}$ towards the diamond cell $D_{K,\edge'}$ with $\edged = \edge|\edge'$.
For each $\edge \in \edges, \edged \in \edgesd(D_{\edge}), \edged \subset K$, we then obtain thanks to \eqref{eq:dual_flux_geom}:
\begin{align*}
    F_{\edge,\edged} = \alpha_W^{\edged} F_W + \alpha_E^\edged F_E + \alpha_S^\edged F_S + \alpha_N^\edged F_N.
\end{align*}
The set of coefficients $\{\alpha_K^\edged, \edged= \edge|\edge'$ with $ \edge,\edge' \in\edges(K)\}$ obtained in each case is given on Figure \ref{tab:quadrilateral_dual_fluxes}.
The coefficients for simplicial cells are given on Fig. \ref{tab:triangular_dual_fluxes} and  Fig. \ref{tab:tetrahedral_dual_fluxes}, and the coefficient for hexahedral cells are given on Tab. \ref{tab:hexahedral_dual_fluxes}.

\begin{figure}[ht!]
\begin{minipage}{0.45\textwidth}
\centering
    \begin{tikzpicture}
        \newcommand{\Left}{1,0}
        \newcommand{\Right}{4,0}
        \newcommand{\Top}{1,3}
        
        \draw (\Top) -- (\Left) ;
        \draw (\Top) -- (\Right) ;
        \draw (\Left) -- (\Right) ;

        \draw[->, line width=2pt, color=gray] (2.5,0) -- (2.5,-1) ;
        \draw (2.3,-0.7) node[rotate = 90, anchor = south west, inner sep =.5 ] {\textcolor{gray}{$F_{\rm S}$}} ;
        \draw[->, line width=2pt, color=gray] (1,1.5) -- (0,1.5) ;
        \draw (0.3,1.7) node[anchor = south west, inner sep =.5] {\textcolor{gray}{$F_{\rm E}$}} ;
        \draw[->, line width=2pt, color=gray] (2.5,1.5) -- (3.25,2.25) ;
        \draw (2.4,1.75) node[rotate = 45, anchor = south west, inner sep =.5] {\textcolor{gray}{$F_{\rm W}$}} ;
    \end{tikzpicture}
\end{minipage}
\begin{minipage}{0.45\textwidth}
    \begin{center}
    \begin{tabular}{|c|c|c|c|}
        \hline
        $ \edged= \edge|\edge' $& $\alpha_W^{\edged}$ & $\alpha_E^{\edged}$ & $\alpha_S^{\edged}$ \\ \hline
        $W|S$ & $\rule[-1.5mm]{0mm}{5mm} -1/3$ & $0  $  & $1/3$  \\ \hline
        $S|E$ & $\rule[-1.5mm]{0mm}{5mm} 0   $ & $1/3$  & $-1/3$ \\ \hline
        $E|W$ & $\rule[-1.5mm]{0mm}{5mm}  1/3$ & $-1/3$ & $0   $ \\ \hline
    \end{tabular}
    \begin{align*}
        F_{\edge,\edged} = & \alpha_W^{\edged} F_W + \alpha_E^{\edged} F_E + \alpha_S^{\edged} F_S + \\ & + \alpha_N^{\edged} F_N.
    \end{align*}
    \end{center}
\end{minipage}
\caption{Primal flux over a triangular cell and coefficient of the dual fluxes}
\label{tab:triangular_dual_fluxes}
\end{figure}

\begin{figure}[ht!]
\begin{minipage}{0.4\textwidth}
\centering
    \begin{tikzpicture}
        \newcommand{\TriLeftBack}{2.2,1}
        \newcommand{\TriLeftFront}{1,0}
        \newcommand{\TriLeftTop}{1,3}
        \newcommand{\TriRightFront}{4,0}
        
        \draw (\TriLeftTop) -- (\TriLeftFront) ;
        \draw [dashed](\TriLeftFront) -- (\TriLeftBack) -- (\TriLeftTop) ;
        \draw (\TriLeftFront) -- (\TriRightFront) ;
        \draw (\TriLeftTop) -- (\TriRightFront) ;
        \draw [dashed](\TriLeftBack) -- (\TriRightFront) ;
        
        \draw [line width=2pt, color=gray, dashed](2.3,0.5) -- (2.3,-0) ;
        \draw [->, line width=2pt, color=gray](2.3,00) -- (2.3,-0.5) ;
        \draw (2.45,-1) node[rotate = 90, anchor = south west, inner sep =.5] {\textcolor{gray}{$F_{\rm B}$}} ; 
        \draw [line width=2pt, color=gray, dashed](1.46,1.46) -- (1,1.46) ;
        \draw [->, line width=2pt, color=gray](1,1.46) -- (0.46,1.46) ;
        \draw (-0.3,1.26) node[ anchor = south west, inner sep =.5] {\textcolor{gray}{$F_{\rm W}$}} ;
        \draw [->, line width=2pt, color=gray] (2,1) -- (1.28,0.4) ;
        \draw (0.5,0.15) node[anchor = south west, inner sep =.5]{\textcolor{gray}{$F_{\rm S}$}} ;
        \draw [line width=2pt, color=gray, dashed] (2.4,1.16) -- (2.8,1.5) ;
        \draw [->, line width=2pt, color=gray] (2.8,1.5) -- (3.1,1.75) ;
        \draw (3.2,1.83) node[anchor = south west, inner sep =.5]{\textcolor{gray}{$F_{\rm E}$}} ;
    \end{tikzpicture}
\end{minipage}
\begin{minipage}{0.55\textwidth}
    \begin{center}
    \begin{tabular}{|c|c|c|c|c|}
        \hline
        $ \edged= \edge|\edge'$ & $\alpha_W^\edged$ & $\alpha_E^\edged$ & $\alpha_S^\edged$ & $\alpha_B^\edged$ \\ \hline
        $ B|S$ & $\rule[-1.5mm]{0mm}{5mm} 0   $ & $0   $  & $ 1/4$ & $-1/4$ \\ \hline
        $ B|W$ & $\rule[-1.5mm]{0mm}{5mm}  1/4$ & $0   $  & $0   $ & $-1/4$ \\ \hline
        $ B|E$ & $\rule[-1.5mm]{0mm}{5mm} 0   $ & $ 1/4$  & $0   $ & $-1/4$ \\ \hline
        $ S|W$ & $\rule[-1.5mm]{0mm}{5mm}  1/4$ & $0   $  & $-1/4$ & $0   $ \\ \hline
        $ W|E$ & $\rule[-1.5mm]{0mm}{5mm} -1/4$ & $ 1/4$  & $0   $ & $0   $ \\ \hline
        $ E|S$ & $\rule[-1.5mm]{0mm}{5mm} 0   $ & $-1/4$  & $ 1/4$ & $0   $ \\ \hline
    \end{tabular}
    \begin{align*}
        F_{\edge,\edged} = & \alpha_W^{\edged} F_W + \alpha_E^{\edged} F_E + \alpha_S^{\edged} F_S + \alpha_N^{\edged} F_N \\
        & + \alpha_B^{\edged} F_B
    \end{align*}
    \end{center}
\end{minipage}%
\caption{Primal flux over a tetrahedral cell and coefficient of the dual fluxes}
\label{tab:tetrahedral_dual_fluxes}\end{figure}
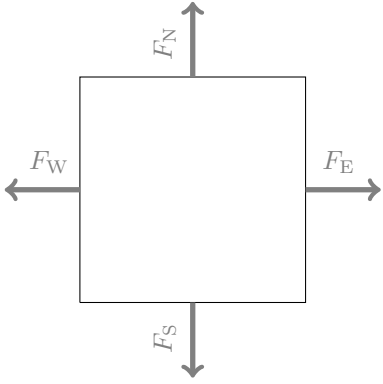

\begin{figure}
\begin{minipage}{0.43\textwidth}
\centering
    \begin{tikzpicture}
        \newcommand{\LeftBottom}{1,0}
        \newcommand{\LeftTop}{1,3}
        \newcommand{\RightBottom}{4,0}
        \newcommand{\RightTop}{4,3}
        
        \draw (\LeftBottom) -- ( \LeftTop) -- ( \RightTop) -- ( \RightBottom) -- ( \LeftBottom) ;
            
        \draw[line width=2pt, color=gray, -> ](2.5,0) -- ( 2.5,-1) ;
        \draw(2.3,-0.7) node[ rotate = 90, anchor = south west, inner sep =.5] {\textcolor{gray}{$F_{\rm S}$}} ;
        \draw[line width=2pt, color=gray, -> ](4,1.5) -- ( 5,1.5) ;
        \draw(4.2,1.7) node[ anchor = south west, inner sep =.5] {\textcolor{gray}{$F_{\rm E}$}} ;
        \draw[line width=2pt, color=gray, -> ](2.5,3) -- ( 2.5,4) ;
        \draw(2.3,3.2) node[ rotate = 90, anchor = south west, inner sep =.5]{\textcolor{gray}{$F_{\rm N}$}} ;
        \draw[line width=2pt, color=gray, -> ](1,1.5) -- ( 0,1.5) ;
        \draw(0.3,1.7) node[ anchor = south west, inner sep =.5] {\textcolor{gray}{$F_{\rm W}$}} ;
    \end{tikzpicture}
\end{minipage}
\begin{minipage}{0.55\textwidth}
    \begin{center}
    \begin{tabular}{|c|c|c|c|c|}
        \hline
        $\edged= \edge|\edge'$ & $\alpha_W^\edged$ & $\alpha_E^\edged$ & $\alpha_S^\edged$ & $\alpha_N^\edged$
        \\ \hline
        $W|S$ & $\rule[-1.5mm]{0mm}{5mm}-3/8$ & $1/8$  & $3/8$  & $-1/8$ 
        \\ \hline
        $S|E$ & $\rule[-1.5mm]{0mm}{5mm}-1/8$ & $3/8$  & $-3/8$ & $1/8$
        \\ \hline
        $E|N$ & $\rule[-1.5mm]{0mm}{5mm} 1/8$ & $-3/8$ & $-1/8$ & $3/8$
        \\ \hline
        $N|W$ & $\rule[-1.5mm]{0mm}{5mm} 3/8$ & $-1/8$ & $1/8$  & $-3/8$
    \\ \hline
    \end{tabular}
    \begin{align*}
        F_{\edge,\edged} = \alpha_W^{\edged} F_W + \alpha_E^\edged F_E + \alpha_S^\edged F_S + \alpha_N^\edged F_N.
    \end{align*}
    \end{center}
\end{minipage}%
\caption{Primal flux over a quadrilateral cell and coefficient of the dual fluxes}
\label{tab:quadrilateral_dual_fluxes}
\end{figure}

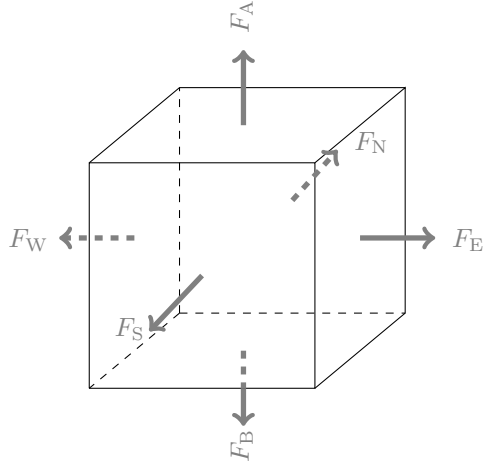
\begin{figure}[ht!]\label{fig:hexahedral_dual_fluxes}
    \centering
	\begin{tikzpicture}
        \newcommand{\LeftFrontBottom}{0,0}
        \newcommand{\LeftBehindBottom}{1.2,1}
        \newcommand{\LeftFrontTop}{0,3}
        \newcommand{\LeftBehindTop}{1.2,4}
        \newcommand{\RightFrontBottom}{3,0}
        \newcommand{\RightBehindBottom}{4.2,1}
        \newcommand{\RightFrontTop}{3,3}
        \newcommand{\RightBehindTop}{4.2,4}
        
        \draw(\LeftFrontBottom) -- ( \LeftFrontTop) -- ( \RightFrontTop) -- ( \RightFrontBottom) -- ( \LeftFrontBottom) ;
        \draw(\RightFrontTop) -- ( \RightBehindTop) -- ( \RightBehindBottom) -- ( \RightFrontBottom) ;
        \draw(\RightBehindTop) -- ( \LeftBehindTop) -- ( \LeftFrontTop) ;
        \draw[dashed](\LeftBehindBottom) -- ( \LeftBehindTop) ;
        \draw[dashed](\LeftBehindBottom) -- ( \LeftFrontBottom) ;
        \draw[dashed](\LeftBehindBottom) -- ( \RightBehindBottom) ;
            
        \draw[line width=2pt, color=gray, dashed](2.05,0.5) -- ( 2.05,-0) ;
        \draw[line width=2pt, color=gray, -> ](2.05,00) -- ( 2.05,-0.5) ;
        \draw(2.2,-1) node[ rotate = 90, anchor = south west, inner sep =.5] {\textcolor{gray}{$F_{\rm B}$}} ;
        \draw[line width=2pt, color=gray, -> ](3.6,2) -- ( 4.6,2) ;
        \draw(4.8,1.8) node[ anchor = south west, inner sep =.5] {\textcolor{gray}{$F_{\rm E}$}} ;
        \draw[line width=2pt, color=gray, -> ](2.05,3.5) -- ( 2.05,4.5) ;
        \draw(2.2,4.7) node[ rotate = 90, anchor = south west, inner sep =.5] {\textcolor{gray}{$F_{\rm A}$}} ;
        \draw[line width=2pt, color=gray, dashed, -> ](0.6,2) -- ( -0.4,2) ;
        \draw(-1.1,1.8) node[ anchor = south west, inner sep =.5] {\textcolor{gray}{$F_{\rm W}$}} ;
        \draw[line width=2pt, color=gray, -> ](1.5,1.5) -- ( 0.8,0.76) ;
        \draw(0.3,0.6) node[ anchor = south west, inner sep = 1] {\textcolor{gray}{$F_{\rm S}$}} ;
        \draw[line width=2pt, color=gray, dashed, -> ](2.7,2.5) -- ( 3.3, 3.17 ) ;
        \draw(3.4, 3) node[ anchor = south west] {\textcolor{gray}{$F_{\rm N}$}} ;
    \end{tikzpicture}
\caption{Primal flux over a hexahedral cell}
\end{figure}

\begin{table}[ht!]\label{tab:hexahedral_dual_fluxes}
    \centering
    \begin{tabular}{|c|c|c|c|c|c|c|}
        \hline
        $\edged=\edge|\edge'$ & $\alpha_W^\edged$ & $\alpha_E^\edged$ & $\alpha_S^\edged$ & $\alpha_N^\edged$ & $\alpha_A^\edged$ & $\alpha_B^\edged$
        \\ \hline
        $B|S$	& $\rule[-1.5mm]{0mm}{5mm} 0$			&	$0$ 	&	$5/24$	& $-1/24$& $1/24$	& $-5/24$
        \\ \hline
        $S|A$ & $\rule[-1.5mm]{0mm}{5mm} 0$				& $0$ 	& $-5/24$	& $1/24$ & $5/24$	& $-1/24$
        \\ \hline
        $A|N$	& $\rule[-1.5mm]{0mm}{5mm} 0$			& $0$	& $-1/24$	& $5/24$ & $-5/24$	& $ 1/24$
        \\ \hline
        $N|B$	& $\rule[-1.5mm]{0mm}{5mm} 0$			& $0$	& $1/24$ 	& $-5/24$& $-1/24$	& $5/24$ 
        \\ \hline
        $W|S$	&  $\rule[-1.5mm]{0mm}{5mm} -5/24$	& $1/24$	& $5/24$ & $-1/24$& $0$		& $0$
        \\ \hline
        $S|E$	& 	$\rule[-1.5mm]{0mm}{5mm} -1/24$	& $5/24$	& $-5/24$ & $1/24$& $0$		& $0$
        \\ \hline
        $E|N$	&  $\rule[-1.5mm]{0mm}{5mm} 1/24$	& $-5/24$	& $-1/24$ & $5/24$& $0$		& $0$
        \\ \hline
        $N|W$& 	$\rule[-1.5mm]{0mm}{5mm} 5/24$	& $-1/24$	& $1/24$ & $-5/24$& $0$		& $0$
        \\ \hline
        $B|E$	&  $\rule[-1.5mm]{0mm}{5mm} -1/24$	& $5/24$	& $0$ & $0$& $1/24$		& $-5/24$
        \\ \hline
        $E|A$	& 	$\rule[-1.5mm]{0mm}{5mm} 1/24$	& $-5/24$	& $0$ & $0$& $5/24$		& $-1/24$
        \\ \hline
        $A|W$	&  $\rule[-1.5mm]{0mm}{5mm} 5/24$	& $-1/24$	& $0$ & $0$& $-5/24$		& $1/24$
        \\ \hline
        $W|B$& 	$\rule[-1.5mm]{0mm}{5mm} -5/24$	& $1/24$	& $0$ & $0$& $-1/24$		& $5/24$
        \\ \hline
    \end{tabular}
    \begin{align*}
        F_{\edge,\edged} = \alpha_W^{\edged} F_W + \alpha_E^\edged F_E + \alpha_S^\edged F_S + \alpha_N^\edged F_N + \alpha_A^\edged F_A + \alpha_B^\edged F_B.
    \end{align*}
    \caption{Coefficient of the dual fluxes for a hexahedral cell}
\end{table}

\subsection{The case of a virtual dual mesh}\label{subsec:prism}
 
For highly distorted (\ie\ far for parallelogram) quadrilaterals or hexahedra $K$ of face $\edge$, when the measure of $\edge$ is small with respect to the characteristic dimensions of $K$, it may be impossible to define $D_{K,\edge}$ as a triangle or a pyramid, respectively, since such a volume of basis $\edge$ and included in $K$ cannot satisfy $|D_{K,\edge}|=|K|/\mathrm{card}(\edges(K))$.
In such a case, the similarity of the above-defined convection operator with a standard finite volume operator is only formal: indeed, its discrete formulation does not rely on the integration of the continuous operator over a control volume, whose shape and interfaces (and normal vectors) are not defined.
For the same reasons, the method described in the above paragraph does not apply to prismatic and pyramidal cells.
We thus present an algebraic technique to deal with those kinds of cells.

\medskip

Let us suppose that $K$ is a prismatic cell : we give a constructive process to deduce the dual fluxes directly from the system \eqref{eq:F_syst}-\eqref{eq:F_bounded}.
Recall that a solution is not unique since the system is underdetermined, but it is still possible to deduce at least one solution.
To fix ideas, we consider a prism with a triangular basis and denote by $S$, $E$, $N$, $W$ and $B$ the faces of the prism, so that $F_S,F_E,F_N,F_W$ and $F_B$ are the primal fluxes over the faces of the prism, as can be seen on Figure \ref{tab:prism_primal_flux}. 
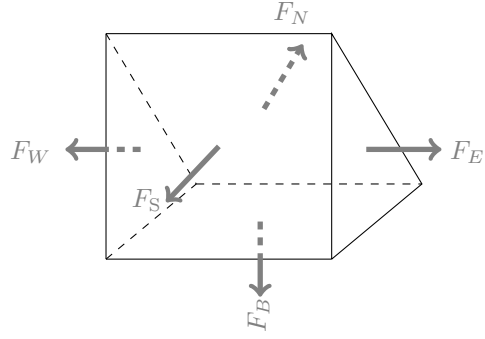
\begin{figure}[ht!]\label{fig:prism_primal_flux}
    \centering
    \begin{tikzpicture}
        \newcommand{\TriLeftBack}{1.2,1}
        \newcommand{\TriLeftFront}{0,0}
        \newcommand{\TriLeftTop}{0,3}
        \newcommand{\TriRightBack}{4.2,1}
        \newcommand{\TriRightFront}{3,0}
        \newcommand{\TriRightTop}{3,3}
		
        \draw (\TriLeftTop) -- (\TriLeftFront) ;
        \draw [dashed](\TriLeftFront) -- (\TriLeftBack) -- (\TriLeftTop) ;
        \draw (\TriRightFront) -- (\TriRightBack) -- (\TriRightTop) -- (\TriRightTop) -- (\TriRightFront) ;
        \draw (\TriLeftFront) -- (\TriRightFront) ;
        \draw (\TriLeftTop) -- (\TriRightTop) ;
        \draw [dashed](\TriLeftBack) -- (\TriRightBack) ;
            
        \draw [line width=2pt, color=gray, dashed](2.05,0.5) -- (2.05,-0) ;
        \draw [line width=2pt, color=gray, ->](2.05,00) -- (2.05,-0.5) ;
        \draw (2.2,-1) node[rotate = 90, anchor = south west, inner sep =.5] {\textcolor{gray}{$F_B$}} ;
        \draw [line width=2pt, color=gray, ->](3.46,1.46) -- (4.46,1.46) ;
        \draw (4.55,1.26) node[anchor = south west, inner sep =.5] {\textcolor{gray}{$F_E$}} ;
        \draw [line width=2pt, color=gray, dashed](0.46,1.46) -- (0,1.46) ;
        \draw [line width=2pt, color=gray, ->](0,1.46) -- (-0.54,1.46) ;
        \draw (-1.3,1.26) node[anchor = south west, inner sep =.5] {\textcolor{gray}{$F_W$}} ;
        \draw [line width=2pt, color=gray, ->](1.5,1.5) -- (0.8,0.76) ;
        \draw(0.3,0.6) node[ anchor = south west, inner sep = 1] {\textcolor{gray}{$F_{\rm S}$}} ;
        \draw [line width=2pt, color=gray, dashed, ->](2.1,2) -- (2.615,2.857) ;
        \draw (2.2,3.1) node[anchor = south west, inner sep =.5] {\textcolor{gray}{$F_N$}} ;
    \end{tikzpicture}
\caption{Primal fluxes over a prism}
\end{figure}

\begin{table}[ht!]\label{tab:prism_primal_flux}
    \centering
        \begin{tabular}{|l|c|c|c|c|c|}
        \hline
        $\edged=\edge|\edge'$ & $\alpha_B^\edged$ & $\alpha_S^\edged$ & $\alpha_N^\edged$ & $\alpha_E^\edged$ & $\alpha_W^\edged$ \\ \hline
        $S|N$ &    0 & -1/5 &  1/5 &     0 &     0 \\ \hline
        $N|B$ &  1/5 &    0 & -1/5 &     0 &     0 \\ \hline
        $B|S$ & -1/5 &  1/5 &    0 &     0 &     0 \\ \hline
        $E|B$ &  1/5 &    0 &    0 & -4/15 &  1/15 \\ \hline
        $E|S$ &    0 &  1/5 &    0 & -4/15 &  1/15 \\ \hline
        $E|N$ &    0 &    0 &  1/5 & -4/15 &  1/15 \\ \hline
        $W|B$ &  1/5 &    0 &    0 &  1/15 & -4/15 \\ \hline
        $W|S$ &    0 &  1/5 &    0 &  1/15 & -4/15 \\ \hline
        $W|N$ &    0 &    0 &  1/5 &  1/15 & -4/15 \\ \hline
    \end{tabular}
    \begin{align*}
       F_{\edge,\edged} = \alpha_W^{\edged} F_W + \alpha_E^\edged F_E + \alpha_S^\edged F_S + \alpha_N^\edged F_N +  \alpha_B^\edged F_B.
    \end{align*}
    \caption{Coefficient of the dual fluxes for a prism}
\end{table}

Then, we denote once again for $\edge,\edge' \in \{D,S,E,N,W\}^2, \edge \neq \edge'$ by $F_{\edge,\edge'}$ the dual flux coming from the diamond cell associated with the face $\edge$ towards the diamond cell associated with the face $\edge'$, accordingly to the connectivity of the cell.
Using the conservativity of the dual flux (that is $F_{\edge,\edged}=-F_{\edge',\edged}$), the system \ref{eq:F_syst} may be written for the prism as:
\begin{align} \label{eq:dual_flux_syst_alg}
    A F_{d} = B F_{p} 
\end{align}
with 
\begin{align*}
    & F_{p} = ( F_U, F_S, F_N, F_E, F_W )^t, \\
    & F_{d} = (F_{S|N}, F_{N|U}, F_{U|S}, F_{E|U}, F_{E|S},F_{E|N},F_{W|U},F_{W|S},F_{W|N})^t,
\end{align*}
where $F_{\sigma|\sigma'}$  denotes the dual flux coming from the diamond cell $D_{\edge}$   towards the diamond cell  $D_{\edge'}$ associated with the face $\edge'$, and
\begin{align*}
    & A=
    \begin{pmatrix}
    0& -1&  1& -1&  0&  0& -1&  0&  0 \\
    1&  0& -1&  0& -1&  0&  0& -1&  0 \\
    -1& 1&  0&  0&  0& -1&  0&  0& -1 \\
    0&  0&  0&  1&  1&  1&  0&  0&  0 \\
    0&  0&  0&  0&  0&  0&  1&  1&  1
    \end{pmatrix},
    \\
    & B= 
    \begin{pmatrix}
    -4/5 &  1/5 &  1/5 &  1/5 &  1/5 \\
     1/5 & -4/5 &  1/5 &  1/5 &  1/5 \\
     1/5 &  1/5 & -4/5 &  1/5 &  1/5 \\
     1/5 &  1/5 &  1/5 & -4/5 &  1/5 \\
     1/5 &  1/5 &  1/5 &  1/5 & -4/5
    \end{pmatrix}.
\end{align*}
Since the dual fluxes are supposed to be a linear combination of the primal fluxes, the vector $F_{d}$ may be expressed as $F_{d} = X F_{p}$, where $X$ is a matrix whose entries are the desired coefficients $\{\alpha_\edge^\edged, \edge\in \edges(K), \edged = \edge|\edged, \edged \in  \edges(K), \edged \ne\edge\}$.
Substituting this expression in \eqref{eq:dual_flux_syst_alg}, one obtains an underdetermined system $A X = B$, for which a solution are obtained through the least-square method.
The coefficients thus obtained are given in the Figure \ref{fig:prism_primal_flux}. 

The same technique is applied for a pyramidal mesh.
The notation for its faces and its primal flux is the same as before, as can be seen in Figure \ref{fig:pyramid_fluxes}.
Note that the connectivity of the pyramid is similar to the one of the prism, up to the fact that the northern and southern dual cells are not connected.
The linear system for a pyramidal mesh may thus be deduced from the prism one by removing any instance of the dual flux $F_{S|N}$.
The coefficients are given in Table \ref{fig:pyramid_fluxes}.

\begin{figure}[ht!]\label{fig:pyramid_fluxes}
    \centering
    \begin{tikzpicture}
        \newcommand{\LeftBack}{1.2,1}
        \newcommand{\LeftFront}{0,0}
        \newcommand{\Top}{0,3}
        \newcommand{\RightBack}{4.2,1}
        \newcommand{\RightFront}{3,0}
    
        \draw (\LeftFront) -- (\RightFront) -- (\RightBack) ;
        \draw [dashed](\RightBack) -- (\LeftBack) -- (\LeftFront) ;
        \draw (\Top) -- (\LeftFront) ;
        \draw (\Top) -- (\RightFront) ;
        \draw (\Top) -- (\RightBack) ;
        \draw [dashed](\Top) -- (\LeftBack) ;
        
        \draw [line width=2pt, color=gray, dashed](2.05,0.5) -- (2.05,-0) ;
        \draw [line width=2pt, color=gray, ->](2.05,00) -- (2.05,-0.5) ;
        \draw (2.2,-1) node[rotate = 90, anchor = south west, inner sep =.5] {\textcolor{gray}{$F_{\rm B}$}} ;
        \draw [line width=2pt, color=gray, dashed](0.46,1.46) -- (0,1.46) ;
        \draw [line width=2pt, color=gray, ->](0,1.46) -- (-0.54,1.46) ;
        \draw (-1.3,1.26) node[anchor = south west, inner sep =.5] {\textcolor{gray}{$F_{\rm W}$}} ;
        \draw [line width=2pt, color=gray, ->](1,1) -- (0.28,0.4) ;
        \draw (-0.5,0.15) node[anchor = south west, inner sep =.5] {\textcolor{gray}{$F_{\rm S}$}} ;
        \draw [line width=2pt, color=gray, ->](2.3,1.3) -- (3,2) ;
        \draw (3.1,2.1) node[anchor = south west, inner sep =.5] {\textcolor{gray}{$F_{\rm E}$}} ;
        \draw [line width=2pt, color=gray, dashed, ->](1.6,1.6) -- (2.151,2.52) ;
        \draw [line width=2pt, color=gray, dashed](2.151,2.52) -- (2,2.27) ;
        \draw [line width=2pt, color=gray, ->](2,2.27) -- (2.151,2.52) ;
        \draw (2.25,2.67) node[anchor = south west, inner sep =.5] {\textcolor{gray}{$F_{\rm N}$}} ;
    \end{tikzpicture}
\end{figure}

\begin{table}[ht!]
    \centering
    \begin{tabular}{|c|c|c|c|c|c|}
        \hline
        $ \edged = \edge |\edge' $ & $\alpha_B^\edged$ & $\alpha_S^\edged$ & $\alpha_E^\edged$ & $\alpha_N^\edged$ & $\alpha_W^\edged$ \\ \hline
        ${B|S}$ &  -1/5 &  4/15 &      0 &  -1/15 &     0 \\ \hline
        ${B|E}$ &  -1/5 &     0 &   4/15 &      0 & -1/15 \\ \hline
        ${B|N}$ &  -1/5 & -1/15 &      0 &   4/15 &     0 \\ \hline
        ${B|W}$ &  -1/5 &     0 &  -1/15 &      0 &  4/15 \\ \hline
        ${S|E}$ &     0 & -4/15 &   4/15 &   1/15 & -1/15 \\ \hline
        ${E|N}$ &     0 & -1/15 &  -4/15 &   4/15 &  1/15 \\ \hline
        ${N|W}$ &     0 &  1/15 &  -1/15 &  -4/15 &  4/15 \\ \hline
        ${W|S}$ &     0 &  4/15 &   1/15 &  -1/15 & -4/15 \\ \hline
    \end{tabular}
    \begin{align*}
        F_{\edge,\edged} = \alpha_W^{\edged} F_W + \alpha_E^\edged F_E + \alpha_S^\edged F_S + \alpha_N^\edged F_N +  \alpha_B^\edged F_B.
    \end{align*}
\caption{Primal flux over a pyramidal cell and coefficient of the dual fluxes}
\label{tab:pyramid_fluxes}
\end{table}

\medskip
 
Note that the algebraic / least-square technique that we described for general cells may also be applied to the cells for which a geometrical construction is possible; we checked that the coefficients obtained through this technique are the same as those obtained by choosing the momentum field as in Section \ref{sec:geom} and \cite{ans-11-anl}.
\color{black}

\section{Lax-Wendroff consistency of the convection operator}
\label{sec:consis}

The derivation of the discrete convection operator presented in the above section is based on stability arguments only; besides, neither the dual mass fluxes nor the shape of the dual control volumes themselves  seem to be precisely defined, so that the method itself may appear rather puzzling.
It then seems worth assessing its convergence properties.

Let us first mention that recent theoretical results have been already obtained for the type of scheme under study: in \cite{lat-18-conv},  the convection term of the variable density incompressible Navier-Stokes equations is discretized on quadrangles with the above method, and convergence of the approximate solutions is proven; in \cite{bab-14-sta}, a linearized version of this operator is shown to satisfy first-order error estimates.
Last but not least, numerical experiments show that this discretization is in most tests more accurate than standard variants, see e.g. \cite{gas-18-mus,her-18-cons}.
Since we are dealing with the full Euler equations, we do not have enough estimates to ensure the compactness of the approximate solutions, and therefore we only show here the Lax-Wendroff consistency of this operator, that is to say, assuming some bounds and compactness on the sequence of approximate functions associated to a vanishing time and mesh steps, we prove that the weak form of the discrete convection operator tends to the weak form of the continuous convection operator. 
The proof relies on some general results \cite{gal-19-wea,gal-22-lax} which we recall in the appendix.

\medskip

Let $\mesh$ be a given mesh and let $\mathcal T=\{(t_n)_{n \in  \llbracket 0, N \rrbracket},\ t_n= n\,\deltat\}$ be a given time discretization of time step $\deltat$.
We define the discrete function associated to the density and the velocity by:
\begin{equation}
 \label{eqdef:discrete_functions}
    \begin{array}{l}
        \rho(\bfx,t) = \rho_K^{n+1} \mbox{ for } \bfx \in K,\ t \in (t_n,t_{n+1}],\ K \in \mesh,\ n \in \llbracket 0, N-1 \rrbracket,
        \\[1ex]
        \bfu(\bfx,t) = \bfu_\edge^{n+1} \mbox{ for } \bfx \in D_\edge,\ t \in (t_n,t_{n+1}],\ \edge \in \edges,\ n \in \llbracket 0, N-1 \rrbracket,
    \end{array}
\end{equation}
and the interpolate of a test function $\varphi$ by 
\begin{multline} \label{eqdef:interp_phi}
    \interpm(\varphi)(\bfx,t) =\varphi_{\edge}^n \mbox{ for }\bfx \in D_\edge \mbox{ and } t \in (t_n, t_{n +1}),
    \\
    \mbox{with }\varphi_{\edge}^n = \frac 1 {|D_\edge|} \int_{D_\edge} \varphi(\bfx,t_n)\dx \quad \mbox{for } \edge \in \edges \mbox{ and } n \in \llbracket 0, N-1 \rrbracket.
    \qquad 
\end{multline}
In the sequel, we also use the notation $\varphi_K^n$ defined by:
\begin{equation}
    \label{eqdef:phiKn}
    \varphi_{K}^n = \frac 1 {|K|} \int_{K} \varphi(\bfx,t_n)\dx \qquad \mbox{for } K \in \mesh \mbox{ and } n \in \llbracket 0, N-1 \rrbracket.
\end{equation}
The regularity of a mesh $\mesh$ is measured by the parameters $\theta_1(\mesh)$ and $\theta_2(\mesh)$ defined by
\begin{subequations}
\begin{align}\label{eqdef:theta-un-deux}
    \theta_1(\mesh) & = \max_{K \in \mesh} \frac{\diam(K)^d}{|K|}, \\
    \theta_2(\mesh) & = \max \Bigl\{ \frac{|K|}{|L|},\ K \mbox{ and } L \mbox{ adjacent cells of } \mesh \Bigr\}.
\end{align}
\end{subequations}

For $i\in \llbracket 1,d \rrbracket$, consider the (scalar) convection operator defined by:
\begin{align}
    \mathcal C_i(\rho,\bfu) : 
    & \quad
    \Omega\times(0,T) 	\to \xR,\nonumber
    \\ & \quad \label{eqdef:Ci}
    (\bfx,t) \mapsto
    \mathcal C_{i,\edge}^n(\rho,\bfu)_\edge, \mbox{ for } \begin{cases}
        \bfx \in D_\edge,\ \edge \in \edges, \\
        t \in (t_n,t_{n+1}),\ n \in \llbracket 0, N-1 \rrbracket,
    \end{cases}
\end{align}
where, for $\edge \in \edges$ and $n \in \llbracket 0, N-1 \rrbracket$,
\begin{align}
     \mathcal C_{i,\edge}^n(\rho,\bfu)_\edge & =  \frac{1}{\deltat}\! \bigl(\rho_\Ds^{n+1} u_{i,\edge}^{n+1} - \rho_\Ds^{n} u_{i,\edge}^n\bigr) + \dive_{\edge}(\rho^n u_i^n \bfu^n),    \label{eqdef:Ciedge}
\end{align}
with $\dive_{\edge}(\rho^n u_i^n \bfu^n)$ defined by \eqref{eq:conv_vel}. 

\begin{theorem}[Lax-Wendroff consistency of the convection operator] \label{theo:cons}
 Let $(\mesh\exm)_\mnn$ and $(\mathcal T\exm)_\mnn$ be a sequence of space-time discretisations be given, with $h_{\mesh\exm}$ and $\deltat\exm$ tending to zero, and let $(\rho\exm, \bfu\exm)_\mnn$ be an associated sequence of discrete functions.
    We suppose that
    \begin{align}\label{eq:hyp_reg}
        &\exists\  \theta \in \xR \mbox{ such that } \max \{\theta_1(\mesh\exm),\ \theta_2(\mesh\exm),\ m \in \xN\} \leq   \theta,\\
        &\label{eq:hyp_faces}
        \exists\ N_\edges \in \xR \mbox{ such that } \max \{\mathrm{card}(\edges(K)),\ K\in \mesh\exm,\ m \in \xN\} \leq  N_\edges.
    \end{align}
    We also suppose that there exist $\ctel{c_rho}, \ctel{c_u} \in \xR_+$ independent of $m$,  and that there exist $\bar \rho \in L^\infty(\Omega \times [0,T))$ and $\bar \bfu \in L^\infty(\Omega \times (0,T))^d$, such that 
    \begin{align}
      &\Vert\rho\exm\Vert_{L^\infty(\Omega \times (0,T))} \le \cter{c_rho} \; \forall m \in \xN, \label{hyp:bound_rho} \\
      &\Vert\bfu\exm\Vert_{L^\infty(\Omega \times (0,T))^d} \le \cter{c_u} \; \forall m \in \xN, \;  \label{hyp:bound_u} \\
      &\Vert \rho\exm -\bar \rho \Vert_{L^\infty(\Omega \times (0,T))} \to 0 \mbox{ as } m \to +\infty,
      \label{hyp:conv_rho}\\
      &\Vert\bfu\exm -\bar \bfu \Vert_{L^\infty(\Omega \times (0,T))^d} \to 0 \mbox{ as } m \to +\infty. \label{hyp:conv_u} 
    \end{align}
For $i \in \llbracket,\rrbracket$, let $C_i\exm(\rho\exm,\bfu\exm)$ be the convection operator defined by \eqref{eqdef:Ci}-\eqref{eqdef:Ciedge} for each mesh $\mesh\exm$. 
Then, for any function $\varphi \in C^\infty_c(\Omega \times (0,T))$,
    \begin{multline} \label{lw_conv}
        \lim_{m \to +\infty}\int_0^T \int_\Omega \mathcal C_i\exm(\rho\exm,\bfu\exm)\ \interpm(\varphi) \dx \dt \\
        = - \int_\Omega  \rho(\bfx,0) \,u_i(\bfx,0)  \varphi (\bfx,0)\dx
        - \int_0^T \int_\Omega \bigl(\rho\,u_i\ \partial_t \varphi + \ \rho\,u_i\,\bfu \cdot \gradi \varphi\bigr) \dx \dt.
    \end{multline}
\end{theorem}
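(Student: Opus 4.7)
The approach is to split the integral $\int_0^T\!\int_\Omega \mathcal{C}_i\exm(\rho\exm,\bfu\exm)\,\interpm(\varphi)\,\dx\,\dt$ as $T_1\exm + T_2\exm$ according to the two terms in \eqref{eqdef:Ciedge}, perform a discrete summation by parts on each so that the discrete derivative falls on the smooth interpolated test function rather than on $\rho\exm u_i\exm$ or on the fluxes, and then pass to the limit by combining uniform consistency of discrete differences of $\varphi$ with $\partial_t\varphi$ and $\nabla\varphi$ (which follows from $\varphi\in C^\infty_c(\Omega\times(0,T))$ together with the regularity \eqref{eq:hyp_reg}--\eqref{eq:hyp_faces}) with the strong $L^\infty$-convergence \eqref{hyp:conv_rho}--\eqref{hyp:conv_u} of $(\rho\exm,\bfu\exm)$.

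\emph{Time term.} Abel summation in $n$ yields
\begin{equation*}
T_1\exm = -\sum_{\edge\in\edges} |D_\edge|\,\rho_{D_\edge}^0\,u_{i,\edge}^0\,\varphi_\edge^0 \;-\; \sum_{n=1}^{N\exm-1} \deltat\exm \sum_{\edge\in\edges} |D_\edge|\,\rho_{D_\edge}^n\,u_{i,\edge}^n\,\frac{\varphi_\edge^n-\varphi_\edge^{n-1}}{\deltat\exm},
\end{equation*}
the $n=N\exm$ boundary contribution vanishing because $\varphi$ is zero for $t$ near $T$. The first sum is a discrete integral of a product of piecewise-constant approximations of $\rho_0, u_{i,0}, \varphi(\cdot,0)$, and a standard weak-consistency lemma (cf.\ the appendix) gives its convergence to $\int_\Omega \bar\rho(\bfx,0)\,\bar u_i(\bfx,0)\,\varphi(\bfx,0)\,\dx$. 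In the bulk sum, $(\varphi_\edge^n-\varphi_\edge^{n-1})/\deltat\exm$ is a uniform approximation of $\partial_t\varphi$, and $\rho_{D_\edge}^n u_{i,\edge}^n$ inherits strong $L^\infty$-convergence to $\bar\rho\,\bar u_i$ via the reconstruction \eqref{eq:pd2}; hence this sum converges to $\int_0^T\!\int_\Omega \bar\rho\,\bar u_i\,\partial_t\varphi\,\dx\,\dt$.

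\emph{Convection term and main obstacle.} Using conservativity (H2) to pair each dual face $\edged=\edge|\edge'$ and regrouping inside its primal cell $K$,
\begin{equation*}
T_2\exm = \sum_{n=0}^{N\exm-1} \deltat\exm \sum_{K\in\mesh\exm}\ \sum_{\substack{\edged=\edge|\edge'\\ \edged\subset K}} F_{\edge,\edged}^n\,u_{i,\edged}^n\,(\varphi_\edge^n-\varphi_{\edge'}^n).
\end{equation*}
Substituting the expansion $F_{\edge,\edged}^n=\sum_{\edge''\in\edges(K)} \alpha_K^\edged F_{K,\edge''}^n$ from \eqref{eqdef:dualflux}, Taylor-expanding $\varphi_\edge-\varphi_{\edge'}$ in terms of $\nabla\varphi$ at a cell-reference point, and using $F_{K,\edge''}^n = |\edge''|\,\rho_{\edge''}^n\,\bfu_{\edge''}^n\cdot\bfn_{K,\edge''}$, the inner sum becomes a weighted combination of primal fluxes and discrete gradients of $u_i\varphi$ on $K$. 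The main obstacle is proving that the algebraically determined coefficients $\alpha_K^\edged$---which are not attached to any specific dual geometry and which are even obtained by least-squares on an underdetermined system---nevertheless conspire to reconstruct, to leading order, a consistent approximation of $\int_K \bar\rho\,\bar u_i\,\bar\bfu\cdot\nabla\varphi\,\dx$. The enabling ingredients are exactly (H1)--(H3): the local mass balance (H1) ties the dual divergence to the primal divergence so that the $\alpha_K^\edged$-weighted test differences reproduce the primal-flux structure up to $O(h)$; the uniform bound (H3) on $\alpha_K^\edged$ rules out amplification of these $O(h)$ errors; and the consistency of the primal flux $F_{K,\edge}^n$ together with \eqref{hyp:conv_rho}--\eqref{hyp:conv_u} identifies the pointwise limit as $\bar\rho\,\bar\bfu\cdot\bfn_{K,\edge}$. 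These three properties are precisely the hypotheses of the general Lax--Wendroff lemma for staggered fluxes from \cite{gal-19-wea,gal-22-lax} recalled in the appendix, whose direct application yields $T_2\exm \to -\int_0^T\!\int_\Omega \bar\rho\,\bar u_i\,\bar\bfu\cdot\nabla\varphi\,\dx\,\dt$. Adding the two limits produces \eqref{lw_conv}.
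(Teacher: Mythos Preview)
Your treatment of the convection term $T_2\exm$ contains a genuine gap. After writing
\[
T_2\exm = \sum_{n}\deltat\exm \sum_{K}\sum_{\edged=\edge|\edge'\subset K} F_{\edge,\edged}^n\,u_{i,\edged}^n\,(\varphi_\edge^n-\varphi_{\edge'}^n),
\]
you assert that (H1)--(H3) are ``precisely the hypotheses of the general Lax--Wendroff lemma for staggered fluxes'' in the appendix and that a direct application finishes the argument. This is incorrect: the space-consistency lemma (Lemma~\ref{lem:space-cons}) requires fluxes of the form $|\zeta|\,\bfF_\zeta^n\cdot\bfn_{P,\zeta}$, i.e.\ well-defined face normals on the mesh where the operator lives. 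For the dual mesh considered here the faces $\edged$ are in general purely virtual---no shape, no normal vector is attached to them---so hypothesis~\eqref{hyp:x} cannot even be written down for the dual operator. The paper makes this point explicitly in the appendix. Your intermediate sentence (``the $\alpha_K^\edged$-weighted test differences reproduce the primal-flux structure up to $O(h)$'') is exactly the claim that needs a proof, and it does not follow from a Taylor expansion of $\varphi_\edge-\varphi_{\edge'}$ together with (H3) alone.

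The paper's route is algebraically different. It does \emph{not} expand the dual fluxes via the coefficients $\alpha_K^\edged$. Instead, it uses (H1) in the form
\[
\sum_{\edged\in\edgesd(D_\edge),\,\edged\subset K} F_{\edge,\edged}^n = -F_{K,\edge}^n + \xi_K\,(\eth\rho)_K^n
\]
twice---once with the factor $\varphi_\edge^n u_{i,\edge}^n$ and once with $\varphi_K^n u_{i,\edge}^n$---together with conservativity, to transform $(X\exm)^n$ into a \emph{primal} convection term
\[
(\tilde X\exm)^n=\sum_{K}\varphi_K^n\sum_{\edge\in\edges(K)} F_{K,\edge}^n\,u_{i,\edge}^n
\]
plus two remainders $(R_1\exm)^n$, $(R_2\exm)^n$ that involve differences $\varphi_\edge^n-\varphi_K^n$, $u_{i,\edged}^n-u_{i,\edge}^n$, $\rho_\edge^n-\rho_K^n$, $\bfu_\edge^n-\bfu_K^n$. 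These remainders are shown to vanish via the translates bound (Theorem~\ref{thm:trans_bound}), and only then is Lemma~\ref{lem:space-cons} applied---on the primal mesh, where normals exist. This reduction to a primal-flux operator is the missing idea in your argument.

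A smaller issue: in the time term you claim ``$\rho_{D_\edge}^n u_{i,\edge}^n$ inherits strong $L^\infty$-convergence to $\bar\rho\,\bar u_i$''. This is not true: $\rho_{D_\edge}^n$ is a weighted average of $\rho_K^n$ and $\rho_L^n$, so $\rho_{D_\edge}^n-\rho\exm|_{D_\edge}$ is controlled by $|\rho_K^n-\rho_L^n|$, which is not small in $L^\infty$. The paper handles this in $L^1$ via Theorem~\ref{thm:trans_bound} when verifying hypothesis~\eqref{hyp:t} of Lemma~\ref{lem:time-cons}.
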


\begin{proof}
    Let $\varphi \in C^\infty_c(\Omega \times (0,T))$ be given.
    By definition of the convection operator, we have:
    \[
        \int_0^T \int_\Omega \mathcal C_i\exm(\rho\exm,\bfu\exm)\ \interpm(\varphi) \dx \dt = T\exm + X\exm
    \]
    with
    \[
        \begin{array}{l} \displaystyle
        T\exm = \sum_{n=0}^{N\exm-1} \sum_{\edge \in \edges\exm} |D_\edge|\ \varphi_\edge^{n}\ \Bigl((\rho\exm)_\Ds^{n+1}\, (u_i\exm)_\edge^{n+1} - (\rho\exm)_\Ds^{n}\, (u_i\exm)_\edge^n \Bigr),
        \\[3ex] \displaystyle
        X\exm = \sum_{n=0}^{N\exm-1} \deltat\exm\ (X\exm)^{n+1},
        
        \end{array}
    \]
    where
    \[
    (X\exm)^n=\sum_{\edge \in \edges\exm} \varphi_\edge^{n} \sum_{\edged \in \edgesd(D_\edge)} F_{\edge,\edged}^n (\rho\exm,\bfu\exm)\ (u_i\exm)_\edged^{n }.
    \]
    Throughout this proof, we suppose that the space and time steps are small enough for the test function $\varphi$ to vanish in the boundary cells and at the last time step.
    Hence, "boundary terms" appear neither in time nor space discrete integration by parts.
    
    \medskip
    The convergence of the time derivative term $T\exm$ is obtained by applying \cite[Lemma 2.7]{gal-22-lax}, which we recall in the appendix with weaker assumptions that are sufficient in the present work, see Lemma \ref{lem:time-cons}.
    In this latter lemma, we choose $\mathcal P\exm$ to be the (virtual) dual mesh defined by the dual cells $D_\edge$ for $\edge \in \edgesint$, and for each component of the velocity, $i = 1, \ldots d$, the function $\beta$ is defined by $\beta(\rho,u_i) = \rho u_i$; 
    Let us first check that the condition \eqref{hyp:condi} of  Lemma \ref{lem:time-cons} is satisfied \ie~ that
    \begin{equation}\label{hyp:rho-u_zero}
       \sum_{\edge\in\edgesint}\int_{D_\edge} \left| (\rho^0 u^0_i)_{D_\edge} - \rho_0(\bfx) u_{0,i}(\bfx) \right| \dx \to 0 \mbox{ as } m \to + \infty.
    \end{equation}
    By \eqref{scheme:init}, $(\rho^0 u^0_i)_{D_\edge} =  \dfrac 1 {|D_{\sigma}|} \left(|D_{K,\sigma}| \rho^0_K + |D_{L,\sigma}| \rho^0_L\right) (u_i^0)_\edge$.
    Now, remark that 
    \[
    \dfrac 1 {|D_{\sigma}|} \left(|D_{K,\sigma}| \rho_0(\bfx) + |D_{L,\sigma}|\rho_0(\bfx)\right)  u_i^0(\bfx)= \rho_0(\bfx) u_{0,i}(\bfx);
    \]
    moreover the piecewise functions $\rho^0$ and $u_i^0$ defined by \eqref{scheme:init} converge respectively to $\rho_0$ and $u_{0,i}$ in $L^p(0,T;\Omega)$ for any $p\ge 1$ so that the piecewise constant function $(\rho u_i)^0= \sum_{\edge\in\edgesint} (\rho^0 u_i^0)_{\Ds} \characteristic_{\Ds} $ converges to $\rho_0 u_{0,i}$ in $L^p(0,T;\Omega)$ for any $p \in [1,+\infty[$. 
    Therefore \eqref{hyp:rho-u_zero} holds. 
    
    \smallskip
    
    Let us then show that the assumption \eqref{hyp:t} of Lemma \ref{lem:time-cons} holds; in the present context, it reads:
      \begin{equation}\label{hyp:rho-u_time}
       R_{t}\exm = \sum_{n=0}^{N\exm}\!\!\! \deltat\exm\!\!\!\! \sum_{\substack{\edge\in\edgesint\exm\\ \edge=K|L}} \!\!\! \int_\Ds \Bigl|  (\rho u_i)_\Ds^n   -  \rho^{n}(\bfx) u_i^n(\bfx) \Bigr| \dx \to 0 \mbox{ as } m \to + \infty.  
    \end{equation}
    By  the definition \eqref{eq:pd2} of $\rho_\Ds^n$, we get that 
    \[
      R_{t}\exm = \sum_{n=0}^{N\exm} \deltat\exm \sum_{\substack{\edge\in\edgesint\exm\\ \edge=K|L}} I_\edge 
    \]
    with 
    \[
    I_\edge = \int_\Ds \Bigl|\bigl( \dfrac 1 {|D_{\edge}|}\left(|D_{K,\edge}| \rho_K + |D_{L,\edge}| \rho_L\right)u_{i,\edge}^n - \rho^{n}(\bfx) u_i^n(\bfx) \Bigr| \dx.
    \]
Let us decompose $I_\edge = I_{K,\edge} +  I_{L,\edge} $ with 
    \begin{align*}
       I_{K,\edge} &=  |D_{K,\edge}| \Bigl|\bigl( \dfrac 1 {|D_{\edge}|}\left(|D_{K,\edge}| \rho_K^n + |D_{L,\edge}| \rho_L^n\right)u_{i,\edge}^n - \rho_K^{n}  u_{i,\edge}^n \Bigr|  \\
      & = \dfrac{|D_{K,\edge}|}{|D_{\edge}|} \Bigl|\bigl( \left(|D_{K,\edge}| \rho_K^n + |D_{L,\edge}| \rho_L^n\right)u_{i,\edge}^n - |D_{\edge}| \rho_K^{n} 
      u_{i,\edge}^n \Bigr| \\
      & = \dfrac{|D_{K,\edge}||D_{L,\edge}|}{|D_{\edge}|} |\rho_K^n -\rho_L^n |u_{i,\edge}^n.´
    \end{align*}
 From the bound \ref{hyp:bound_u}, we thus get that  
    \[
        R_t\exm \leq 
       2\sum_{n=0}^{N\exm} \deltat\exm \sum_{\substack{\edge\in\edgesint\exm\\ \edge=K|L}} \dfrac{|D_{K,\edge}||D_{L,\edge}|}{|D_{\edge}|} |\rho_K^n -\rho_L^n|.
    \]
   The convergence to zero of $R_{t}\exm$ follows thanks to Theorem \ref{thm:trans_bound}. 
   Indeed, taking $\mathcal{P} = \mesh\exm$, $P= K$, $Q=L$, it is clear that $\theta_{\mathcal P} = \max_{\substack{\edge \in \edgesint\exm\\ \edge=K|L}} \dfrac{|D_{K,\edge}|} {|K|} \dfrac { |D_{L,\edge}|}{|D_{\edge}|} \le 1$.
   
   \smallskip
   
   We have thus proven that  the assumptions of Lemma \ref{lem:time-cons} hold, and so that:
   \begin{equation} \label{lim-time}
     \lim_{m\to +\infty} T\exm  = - \int_\Omega  \rho(\bfx,0) \,u_i(\bfx,0)  \varphi (\bfx,0)\dx  - \int_0^T \int_\Omega  \rho\,u_i\ \partial_t \varphi \dx \dt.
    \end{equation}

 \medskip

    Let us now turn to the convection term.
    The first step in the analysis of this term consists in writing it as the sum of a remainder and a second term involving only the primal mass fluxes instead of the dual ones.
    Dropping  the dependency on $m$ at the right hand-side and decomposing the sum yields:
    \begin{align}
        (X\exm)^{n} & =\sum_{\edge \in \edges\exm} \varphi_\edge^{n} \sum_{\edged \in \edgesd(D_\edge)} F_{\edge,\edged}^n\ u_{i,\edged}^{n} \nonumber \\
        \label{eq:Tc}
        & = \sum_{K \in \mesh\exm}\ \sum_{\edge \in \edges(K)} \varphi_\edge^{n} \sum_{\substack{\edged \in \edgesd(D_\edge),\\ \edged \subset K}} F_{\edge,\edged}^n\ u_{i,\edged}^{n}.
    \end{align}
    For $K \in \mesh$ and $\edge \in \edges(K)$, let us recast the mass balance over the half-diamond cells \eqref{eq:F_syst} as:
    \begin{equation}\label{eq:mss_DKS}
        \sum_{\edged \in \edgesd(D_\sigma),\ \edged \subset K} F_{\edge,\edged}^n= -F_{K,\edge}^n + \xi_K \ (\eth \rho)_K^n
        \qquad \mbox{with } (\eth \rho)_K^n = \sum_{\edge' \in \edges(K)} F_{K,\edge'}.
    \end{equation}
    We recall that the coefficient $\xi_K$ does not depend on the face of $K$ (hence the suppression of the index $\edge$ in the notation) and only depends on the geometry of $K$ (so, for instance, for a two-dimensional mesh of quadrangles, $\xi_K=1/4$ for all the cells of the mesh).
    We now remark that, thanks to this relation,
    \begin{align*}
        \sum_{K \in \mesh\exm}\ \sum_{\edge \in \edges(K)} \varphi_\edge^{n} u_{i,\edge}^{n} \sum_{\substack{\edged \in \edgesd(D_\edge),\\ \edged \subset K}} F_{\edge,\edged}^n
        = -\sum_{K \in \mesh\exm}\ \sum_{\edge \in \edges(K)} \varphi_\edge^{n} u_{i,\edge}^{n} F_{K,\edge}^n 
        + \sum_{K \in \mesh\exm} \xi_K \ (\eth \rho)_K^n \sum_{\edge \in \edges(K)} \varphi_\edge^{n} u_{i,\edge}^{n}.
    \end{align*}
    By conservativity, the first sum at the right hand-side of this relation vanishes.
    Thanks to this equation, we recast Equation \eqref{eq:Tc} as $(X\exm)^{n}=(X_1\exm)^n+(X_2\exm)^n$ with
    \begin{equation}\label{eq:Tc1}
        \begin{array}{l} \displaystyle
            (X_1\exm)^n=  \sum_{K \in \mesh\exm}\ \sum_{\edge \in \edges(K)} \varphi_\edge^{n} \sum_{\substack{\edged \in \edgesd(D_\edge),\\ \edged \subset K}} F_{\edge,\edged}^n\ \bigl( u_{i,\edged}^{n} - u_{i,\edge}^{n}),
            \\[4ex] \displaystyle
            (X_2\exm)^n = \sum_{K \in \mesh\exm} \xi_K \ (\eth \rho)_K^n \sum_{\edge \in \edges(K)} \varphi_\edge^{n} u_{i,\edge}^{n}.
        \end{array}
    \end{equation}
    Let us now decompose $(X_1\exm)^n$ as $(X_1\exm)^n = (X_3\exm)^n + (R_{1}\exm)^n$ with
    \begin{equation}\label{eq:Tc3}
        \begin{array}{l} \displaystyle
            (X_3\exm)^n=  \sum_{K \in \mesh\exm} \varphi_K^{n} \sum_{\edge \in \edges(K)}  \sum_{\substack{\edged \in \edgesd(D_\edge),\\ \edged \subset K}} F_{\edge,\edged}^n\ \bigl( u_{i,\edged}^{n} - u_{i,\edge}^{n}),
            \\[4ex] \displaystyle
            (R_{1}\exm)^n = \sum_{K \in \mesh\exm}\ \sum_{\edge \in \edges(K)} (\varphi_\edge^{n}-\varphi_K^{n}) \sum_{\substack{\edged \in \edgesd(D_\edge),\\ \edged \subset K}} F_{\edge,\edged}^n\ \bigl( u_{i,\edged}^{n} - u_{i,\edge}^{n}).
        \end{array}
    \end{equation}
    Let us show that 
    \begin{equation}
     \label{eq:R1}
      \sum_{n=0}^{N\exm-1} \deltat\exm(R_{1}\exm)^n \to 0 \mbox{ as } m\to+\infty.
    \end{equation}
    Firstly, by the mean value theorem, there exists $\ctel{cphi}$ depending only on $\varphi$ such that 
    \[
     |\varphi_\edge^{n}-\varphi_K^{n}| \le \cter{cphi} (\diam(K) + \diam(L)).
    \]
    Secondly, thanks to the definition \eqref{eqdef:theta-un-deux}, one has
    \[\diam(L)^d \le \theta_1\exm |L| \le \theta_1\exm \theta_2\exm |K| \le \theta_1\exm \theta_2\exm \diam(K)^d,
     \]
    so that, thanks to the regularity \eqref{eq:hyp_reg} of the mesh, 
    \[
    \diam(K)+\diam(L) \leq (1+\theta^{2/d}) \diam(K), 
    \]
and therefore
    \begin{equation}
    \label{eq:bound-phi}
     |\varphi_\edge^{n}-\varphi_K^{n}| \le \cter{cphi} (1+\theta^{2/d}) \diam(K).
    \end{equation}
    By the definition \eqref{eqdef:dualflux} of the dual flux and owing to the  $L^\infty$ estimates \eqref{hyp:bound_rho} and \eqref{hyp:bound_u}, the following bound holds:
    \begin{align*}
        F_{\edge,\edged} = \sum_{\edge' \in \edges(K)} \alpha_K^\edged F_{K,\edge'}\leq \cter{c_rho}\cter{c_u} \diam(K)^{d-1} ,
        \edge \in \edges(K) ,\ \edged \in \edgesd(D_{\edge}) \mbox{ and } \edged \subset K,
    \end{align*}
    Thirdly, for $\edged=\edge|\edge'$, $u_{i,\edged}^n$ is a convex combination of $u_{i,\edge}^n$ and $u_{i,\edge'}^n$.
    These three arguments together yield:
    \begin{align*}
        |(R_{1}\exm)^n| &\leq  \cter{c_rho}\cter{c_u} \cter{cphi} (1+\theta^{2/d}) \sum_{K \in \mesh} \diam(K)^d \sum_{\edge \in \edges(K)} \sum_{\substack{\edged \in \edgesd \exm, \edged \subset K\\ \edged = \edge|\edge'}} |\bfu^n_\edge - \bfu^n_{\edge'}|\\
        &\leq 3 \cter{c_rho}\cter{c_u} \cter{cphi} (1+\theta^{2/d})  \sum_{\substack{\edged \in \edgesd \exm \\ \edged = \edge|\edge'\subset K}} \diam(K)^d |\bfu^n_\edge - \bfu^n_{\edge'}|.
    \end{align*}

    The assertion \eqref{eq:R1} then follows from Theorem \ref{thm:trans_bound} given in the appendix.
    Indeed, we consider for $\mathcal P$ the mesh which consists of the cells $P_\edged= D_{K,\edge} \cup D_{K,\edge'}, \edged = \edge|\edge'$ where $K \in \mesh$ is such that $\edged \subset K$. 
    Then  Theorem \ref{thm:trans_bound}  holds provided that $ \omega_{\edge,\edge'} = \dfrac{\diam(K)^d}{|D_{K,\edge}|+|D_{K,\edge'}|}$ is bounded independently of $m$; this is indeed true, since, thanks to the assumptions \eqref{equal-volume}, \eqref{eq:hyp_reg} and \eqref{eq:hyp_faces},
    \[
     \dfrac{\diam(K)^d}{|D_{K,\edge}|+|D_{K,\edge'}|}= \mathrm{card}\edges(K) \dfrac{\diam(K)^d}{2 |K|} \le\dfrac {\theta N_\edges} 2.
    \]

  \medskip  
    
  Developing the term $(X_3\exm)^n$, we obtain
  \begin{align*}
      (X_3\exm)^n = \sum_{K \in \mesh\exm} \varphi_K^{n} \sum_{\edge \in \edges(K)}  \sum_{\substack{\edged \in \edgesd(D_\edge),\\ \edged \subset K}} F_{\edge,\edged}^n\ u_{i,\edged}^{n}
      - \sum_{K \in \mesh\exm} \varphi_K^{n} \sum_{\edge \in \edges(K)} u_{i,\edge}^{n} \sum_{\substack{\edged \in \edgesd(D_\edge),\\ \edged \subset K}}F_{\edge,\edged}^n.
  \end{align*}
    By conservativity, the first sum at the right hand-side vanishes.
    Using once again \eqref{eq:mss_DKS}, we then obtain $(X_3\exm)^n=(\tilde X\exm)^n+(X_4\exm)^n$
    \begin{equation}
        \begin{array}{l} \displaystyle
            (\tilde X\exm)^n=  \sum_{K \in \mesh\exm} \varphi_K^{n} \sum_{\edge \in \edges(K)} F_{K,\edge}^n u_{i,\edge}^{n},
            \\[4ex] \displaystyle
            (X_4\exm)^n = - \sum_{K \in \mesh\exm} \xi_K \ (\eth \rho)_K^n\ \varphi_K^{n} \sum_{\edge \in \edges(K)} u_{i,\edge}^{n}.
        \end{array}
    \end{equation}
    We gather $(X_4\exm)^n$ with $(X_2\exm)^n$ to obtain yet another remainder term:
    \begin{equation}\label{eq:R2}
        (R_{2}\exm)^{n} =  (X_2\exm)^{n} + (X_4\exm)^{n} = \sum_{K \in \mesh\exm} \xi_K \ (\eth \rho)_K^n \sum_{\edge \in \edges(K)} (\varphi_\edge^{n} - \varphi_K^{n})\ u_{i,\edge}^{n}.
    \end{equation}
    Let us show that $(R_{2}\exm)^{n}$ is indeed a remainder term, in the sense that
    \begin{equation}
     \label{eq:R2to0}
      \sum_{n=0}^{N\exm-1} \deltat\exm(R_{2}\exm)^{n} \to 0 \mbox{ as } m\to+\infty.
    \end{equation}
     For a given cell $K$, let $\bfu_K^n$ be the mean value of the velocities $\bfu_\edge^n$ at the faces of $K$; since $  \sum_{\edge \in \edges(K)} |\edge|\ \rho_K^n\ \bfu_K^n \cdot \bfn_{K,\edge} = 0$, we get 
    \[
        (\eth \rho)_K^n = \sum_{\edge \in \edges(K)} |\edge|\ \rho_\edge^n\ \bfu_\edge^n \cdot \bfn_{K,\edge} = 
        \sum_{\edge \in \edges(K)} |\edge|\ (\rho_\edge^n\ \bfu_\edge^n - \rho_K^n\ \bfu_K^n) \cdot \bfn_{K,\edge}.
    \]
    By the triangle inequality, owing to the the $L^\infty$ estimates \eqref{hyp:bound_rho} and \eqref{hyp:bound_u}, using again \eqref{eq:bound-phi}, we get that there exists $\ctel{c-r2}$ independent of $m$ such that:
    \begin{align*}
       (R_{2}\exm)^n \leq \cter{c-r2} (1+\theta^{2/d}) \sum_{K \in \mesh} \diam(K)^d \sum_{\edge \in \edges(K)} \left( |\rho_\edge^n - \rho_K^n| + |\bfu_\edge^n - \bfu_K^n|\right).
    \end{align*}
    Since for $\edge = K|L$, $\rho_\edge^n$ is a convex combination of $\rho_K^n$ and $\rho_L^n$, and since $\bfu^n_K = \frac{1}{\mathrm{card}(\edges(K))}\sum_{\edge' \in \edges(K)} \bfu^n_{\edge'}$, we obtain:
    \begin{align*}
        (R_{2}\exm)^n \leq C \theta (1+\theta^{2/d}) \Biggr[ \sum_{\substack{\edge \in \edgesint\\ \edge = K|L}} (|K|+|L|)|\rho_K^n - \rho_L^n| + 
        \sum_{\edge \in \edgesint} |D_\edge| \sum_{\substack{\edged \in \edgesd \exm\\ \edged = \edge|\edge'}} |\bfu^n_\edge - \bfu^n_{\edge'}| \Biggr].
    \end{align*}
    Invoking once again Theorem \ref{thm:trans_bound} with the primal mesh  $\mathcal \mesh\exm$ for the first sum of the right hand-side and  the dual mesh for the second one,  yields that \eqref{eq:R2to0} holds.
    
    \medskip
    
    Let us finally prove the convergence of the term $(\tilde X\exm)^n$.
    This sum is the weak form of the divergence part of a new convection operator, posed on primal cells, and defined by:
    \begin{align*}
        \mathcal C_\mesh(\rho,\bfu) : 
        & \quad
        \Omega\times(0,T) 	\to \xR,
        \\ & \quad 
        (\bfx,t) \mapsto
        \mathcal C_K^n(\rho,\bfu), \mbox{ for } \begin{cases}
            \bfx \in K,\ K \in \mesh, \\
            t \in (t_n,t_{n+1}),\ n \in \llbracket 0, N-1 \rrbracket,                       
        \end{cases}
    \end{align*}
    where, for $K \in \mesh$ and $n \in \llbracket 0, N-1 \rrbracket$,
    \begin{align*}
        C_K^n(\rho,\bfu)= \sum_{\edge \in \edges(K)} F_{K,\edge}^n\ u_{i,\edge}^{n}.
    \end{align*}
    Invoking Lemma \ref{lem:space-cons}, we thus have to check the consistency of the flux defined by $G_{K,\edge}=F_{K,\edge}^n\ u_{i,\edge}^{n}$ for $K \in \mesh$ and $\edge \in \edges(K)$, that is to say  assumption \eqref{hyp:x}.
    For $\bfx \in K$, the approximate density is $\rho(\bfx) = \rho_K$ and the approximate velocity $\bfu = \sum_{\edge'\in \edges(K)} \bfu_{\edge'} \characteristic_{D_{K,\edge'}}$. 
    Since  $|K| = {\mathrm{card}\bigl(\edges(K)\bigr)}|D_{K,\edge'}|$ for any $\edge' \in \edges(K)$, the left hand-side of assertion \eqref{hyp:x} reads, for the operator at hand:
    \[
        R_{\dive}\exm=\sum_{n=0}^{N\exm-1} \deltat\exm \sum_{K \in \mesh\exm} \diam(K)
        \sum_{\edge \in \edgesint(K)} R_{K,\edge}^{n},
    \]
    with
    \[
        R_{K,\edge}^{n} = |\edge|\ \left| \bfn_{K,\edge} \cdot 
        \frac 1 {\mathrm{card}\bigl(\edges(K)\bigr)}\ \sum_{\edge' \in \edges(K)} \rho_\edge^n\ u_{i,\edge}^{n}\ \bfu_\edge^n  - \rho_K^{n}\ u_{i,\edge'}^{n}\ \bfu_{\edge'}^{n}
        \right|.
    \]
    In this relation, $\rho_\edge^n$ stands for the approximation of the density at the face $\edge$, and is a convex approximation of $\rho_K^n$ and $\rho_L^n$ where $K$ and $L$ are the cells separated by $\edge$.
     The proof that $R_{\dive}\exm$ tends to zero relies on Theorem \ref{thm:trans_bound}.
    So, we have to recast this term as a collection of jumps, in time or in space, and show that the weights of these jumps are such that Theorem \ref{thm:trans_bound} applies.
    By the triangle inequality and owing to the bounds \eqref{hyp:bound_rho} and \eqref{hyp:bound_u}, we obtain 
     \begin{equation} \label{eq:R_div}
     R_{K,\edge}^{n} 
         \leq |\edge|\ \left[ \cter{c_u}^2 | \rho_\edge^{n} - \rho_K^{n}| +   2 \cter{c_u}\cter{c_rho} |\bfu_{\edge}^{n}\   -  \bfu_{\edge'}^{n}| \right]
     \end{equation}
     Hence $R_{\dive}\exm \le T_{\rho}\exm + T_{\bfu}\exm$ with 
     \begin{align}
      &  T_{\rho}\exm \le \cter{c_u}^2 \sum_{n=0}^{N\exm-1} \deltat\exm \sum_{K \in \mesh\exm} \diam(K) \sum_{\edge \in \edgesint(K)} |\edge|\   |\rho_\edge^{n} - \rho_K^{n}| \label{Trho}\\
      &  T_{\bfu}\exm \le 2 \cter{c_u}\cter{c_rho}  \sum_{n=0}^{N\exm-1} \deltat\exm \sum_{K \in \mesh\exm} \diam(K)  \sum_{\edge \in \edgesint(K)} |\edge| |\bfu_{\edge}^{n}\   -  \bfu_{\edge'}^{n}|.\label{Tu}
     \end{align}
    Remarking that for $\edge = K|L$, $\rho_\edge^{n}$ is a convex combination of $\rho_K^{n}$ and $\rho_L^{n}$, it follows that
    \[
         T_{\rho}\exm \leq  \cter{c_u}^2  \sum_{n=0}^{N\exm-1} \deltat\exm \sum_{K \in \mesh\exm} \diam(K) \sum_{\substack{\edge \in \edgesint(K)\\ \edge=K|L}} |\edge|\ |\rho_K^{n} - \rho_L^{n}|,
    \]
    so that, thanks to the regularity of the sequence of meshes,
    \[
         T_{\rho}\exm  \leq\cter{c_u}^2 \theta \sum_{n=0}^{N\exm-1} \deltat\exm \sum_{\edge \in \edgesint,\ \edge=K|L} (|K| + |L|)\ |\rho_K^{n} - \rho_L^{n}|,
    \]
    which converges to zero thanks to Theorem \ref{thm:trans_bound}, still by regularity of the sequence of meshes.  
    
    Reordering the summation in the bound \eqref{Tu} of $T_{\bfu}\exm$, we get that:
    \begin{align*}
        T_{\bfu}\exm 
        & \displaystyle
        \leq 2 \cter{c_u}\cter{c_rho}   \sum_{n=0}^{N\exm-1} \deltat\exm \sum_{K \in \mesh\exm} \diam(K) \sum_{\edge \in \edges(K)} |\edge|\ \sum_{\edge' \in \edges(K)} |\bfu_\edge^{n+1} - \bfu_{\edge'}^{n+1}|
        \\ & \displaystyle
        \le 2 \cter{c_u}\cter{c_rho}   \sum_{n=0}^{N\exm-1} \deltat\exm \sum_{\substack{\edged \in \edgesd\exm,\ \edged \subset K,\\ \edged=\edge|\edge'}} \diam(K)\ \bigl(|\edge|+|\edge'|\bigr)\ |\bfu_\edge^{n+1} - \bfu_{\edge'}^{n+1}|,
    \end{align*}
    and the convergence to zero of $T_{\bfu}\exm $  again follows by Theorem \ref{thm:trans_bound}, by construction of the dual cells, $|D_{K,\edge}| = |D_{K,\edge'}| =|K|/\mathrm{card}(\edges(K))$ and since $\diam(K)\, (|\edge|+|\edge'|)\leq 2\,\theta\ |K|$ .  
    Hence 
    \begin{align*}
        \lim_{m \to + \infty} (\tilde X\exm)^n = 
        - \int_\Omega  \rho(\bfx,0) \,u_i(\bfx,0)  \varphi (\bfx,0)\dx 
        - \int_0^T \int_\Omega \bigl(\rho\,u_i\ \partial_t \varphi + \ \rho\,u_i\,\bfu \cdot \gradi \varphi\bigr) \dx \dt.
    \end{align*}
    This limit, together with the limits \eqref{eq:R1},\eqref{eq:R2}, concludes the proof of Theorem \ref{theo:cons}.
\end{proof}

\section{Numerical tests}\label{sec:num}
In this section, we present numerical tests to assess the validity of the proposed discretization.
The computations presented here are performed with the open-source CALIF$^3$S software developed at IRSN \cite{califs}.

\subsection{A three dimensional Mach=10 shock on a column}

We consider here the problem of a uniform three-dimensional shock, that is impeded by an obstacle in the form of a circular column.
Indeed, the domain $\Omega$ consists in the $[0,0.4]\times[0,0.41]\times[0,0.4]$ cube, with a cylindrical column of radius $0.1$ and height $0.3$, which has the center of its basis located at the point $(0.2,0.2,0)$.

At the initial time, the flow is supposed at rest, and is initialized with the right state of the problem, that is:
\begin{align*}
    \begin{bmatrix} \bfu_R = (0, 0, 0)^t \\ \rho_R=1.4 \\ p_R = 1 \end{bmatrix}.
\end{align*}
Then, a strong shock is supposed to be coming uniformly from the left side of the domain, with the given profile:
\begin{align*}
    \begin{bmatrix} \bfu_L = (8.25, 0, 0)^t \\ \rho_L= 8 \\ p_L = 116.5 \end{bmatrix}.
\end{align*}
This profile is determined through the Rankine-Hugoniot conditions, to retrieve a velocity of the shock equal to $\omega = 10$.
Moreover, the coefficient $\gamma$ is equal to $\gamma = 1.4$, so the speed of sound in the pre-shock state is equal to $c = \sqrt{\frac{\gamma p}{\rho}} = 1$ (which explains the Mach number $M=\omega / c = 10$).

The other boundaries condition are given as follows: outlet boundaries conditions are imposed on the right side of the domain, whereas all the other boundaries (that is the upper, lower, front and behind faces of the box, as well as the cylinder) are considered as walls with slip boundary conditions.

The problem is tested on prismatic and pyramidal meshes.
The meshes are constructed as follows: first, a quadrilateral mesh of the bottom of the box is built so as to carefully fit the contour of the obstacle; it consists in 30160 quadrilaterals.
A coarse version of this mesh, with and without the obstacle, can be seen in Figure \ref{fig:mesh_obstacle_2D}.
Then, if one wants a prismatic mesh, the quadrangles are split along a diagonal to obtain triangles.
The resulting triangles are then extruded into $123$ layers in the $z$-direction to form prismatic cells.
The pyramidal mesh is obtained by extruding the quadrangles once again in $123$ layers, thus leading to a hexahedra mesh; the hexahedra are then decomposed into six pyramids.
Finally, the mesh is perforated to fit the cylindrical column.
This leads to very fine meshes, composed of 6828960 cells for the prismatic mesh and 20486880 cells for the pyramidal one.
A section of the coarse version of both meshes can be seen in Figure \ref{fig:mesh_obstacle_3D}.

The results are given on Figure \ref{fig:choc_column_prism} and \ref{fig:choc_column_pyr}, in the form of sections of the domain at different heights: one at the bottom of the column, one just on top of the column and one at the top of the domain.
On both meshes, the results are similar.
This has to be expected since both meshes are very refined, so the computation can be considered as converged.
Moreover, the results are conforming to what can be expected. 
Indeed, at the top of the domain, the shock is not perturbed by the column, so the profile is identical to a pure shock problem.
At the foot of the column, the shock bounces on the obstacle, which creates a zone with high density.
This also causes the creation of a Mach stem on the right part of the obstacle.
At the top of the column, the profile is as expected an intermediary step between the two previous profiles.

\begin{figure}[ht!]
    \centering
    \includegraphics[width = 0.2\textwidth]{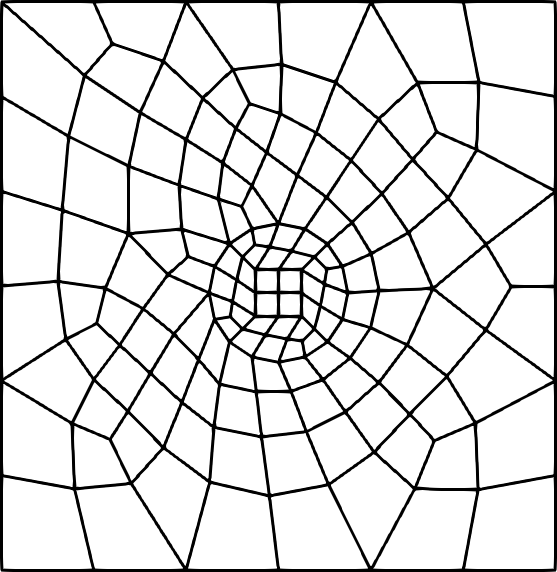}%
    \hspace{1cm}
    \includegraphics[width = 0.2\textwidth]{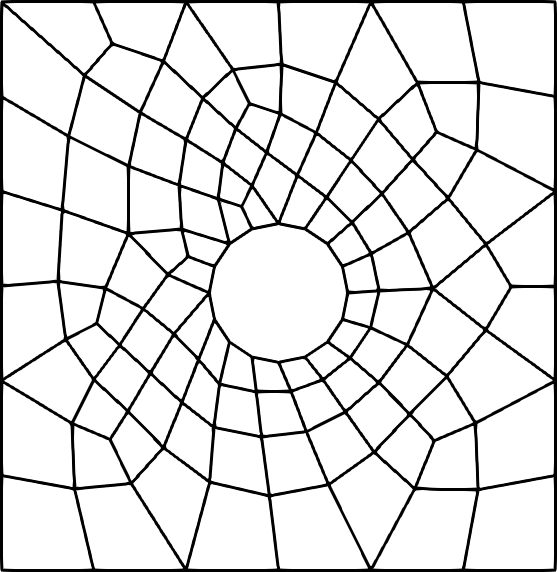}
    \caption{Two dimensional coarse mesh for the shock on a column}
    \label{fig:mesh_obstacle_2D}
\end{figure}

\begin{figure}[ht!]
    \centering
    \includegraphics[width = 0.22\textwidth]{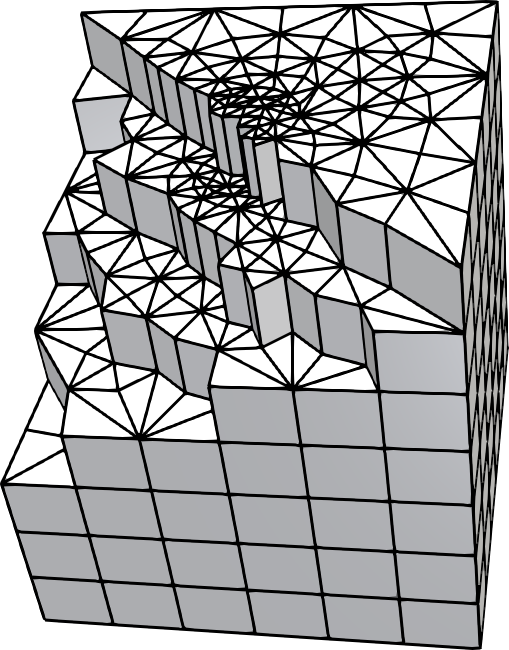}%
    \hspace{1cm}
    \includegraphics[width = 0.22\textwidth]{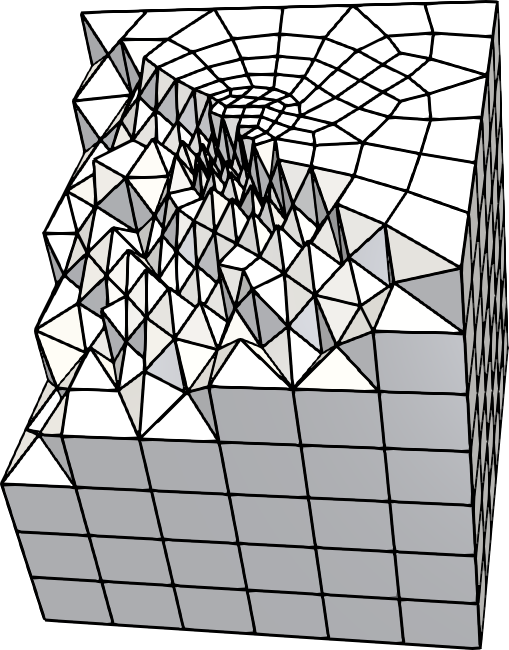}
    \caption{Cut on the above part of the three dimensional coarse meshes for the shock on a column. Left: prismatic mesh. Right: pyramidal mesh.}
    \label{fig:mesh_obstacle_3D}
\end{figure}

\begin{figure}[ht!]
    \centering
    \includegraphics[width = 0.25\textwidth]{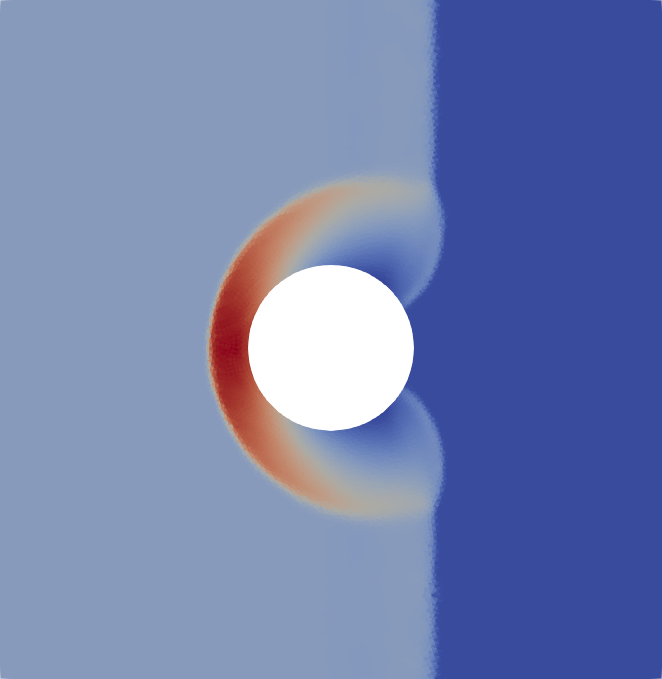}%
    \hspace{1cm}
    \includegraphics[width = 0.25\textwidth]{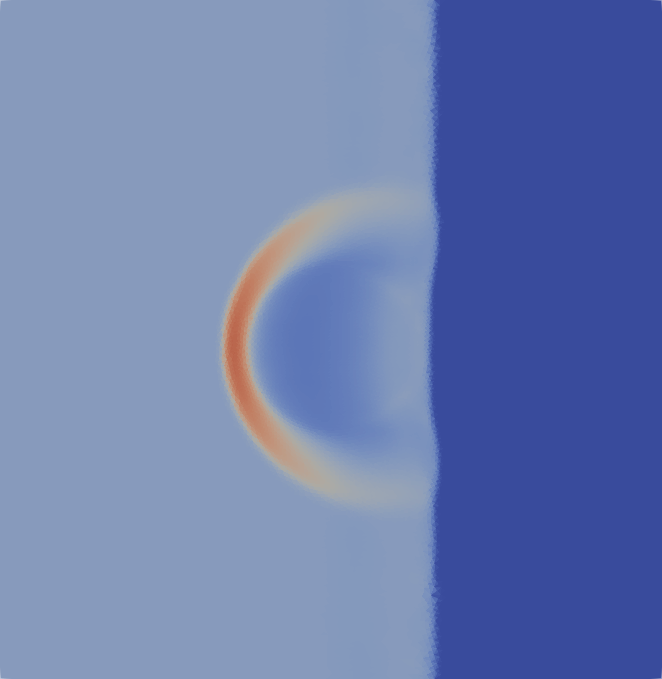}%
    \hspace{1cm}
    \includegraphics[width = 0.25\textwidth]{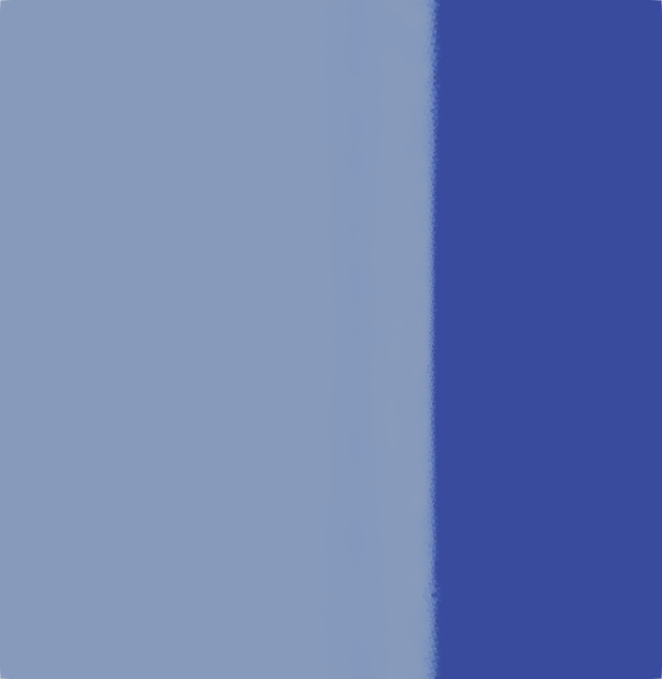}
    \caption{Density for the problem of the shock on a column on a prismatic mesh, at time $t=0.026$. From left to right : cut at $z=0.01$, cut at $z=0.31$ and cut at $z=0.4$.}
    \label{fig:choc_column_prism}
\end{figure}

\begin{figure}[ht!]
    \centering
    \includegraphics[width = 0.25\textwidth]{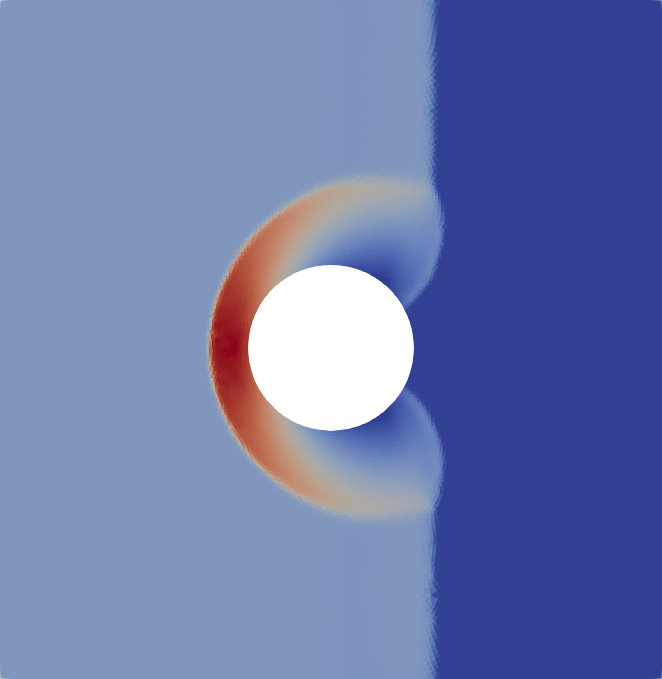}%
    \hspace{1cm}
    \includegraphics[width = 0.25\textwidth]{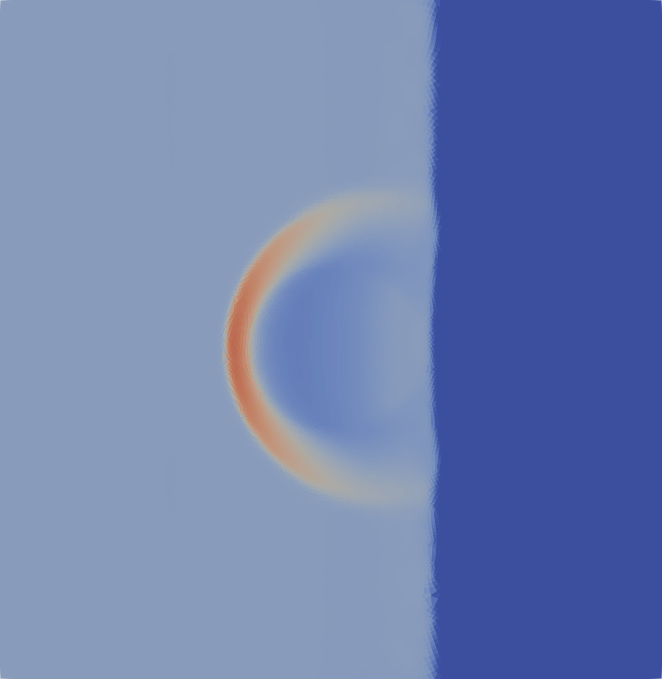}%
    \hspace{1cm}
    \includegraphics[width = 0.25\textwidth]{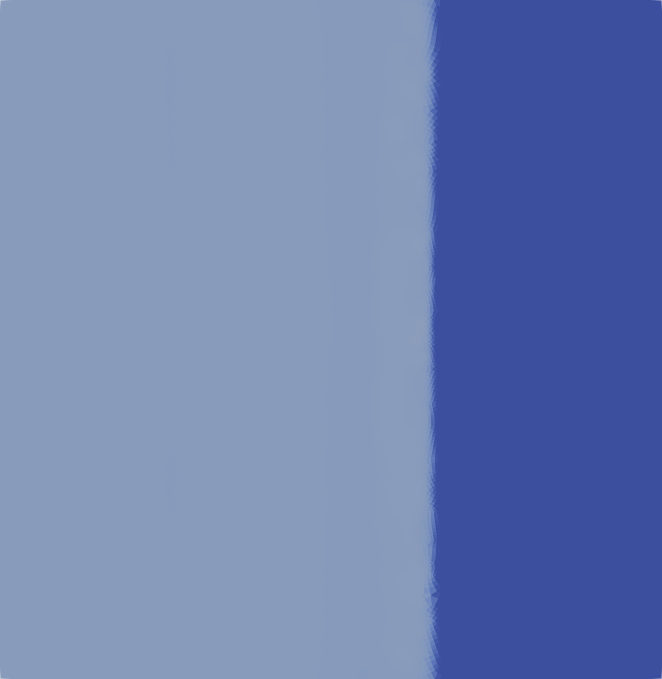}
    \caption{Density for the problem of the shock on a column on a pyramidal mesh, at time $t=0.026$. From left to right : section at $z=0.01$, $z=0.31$ and  $z=0.4$.}
    \label{fig:choc_column_pyr}
\end{figure}

\subsection{A shock with reflexive boundary condition}

We finally turn to a test case where an analytical solution may be obtained through the Rankine-Hugoniot conditions.
We first consider a one dimensional domain $\Omega_{1D}=[0,5]$.
At the initial time $t=0$, and for all $x<2$, the solution is supposed to be "at rest" in the left part of the domain, with a profile given by:
\begin{align}
    \begin{bmatrix}
    u_{L1} = 0 \\ \rho_{L1}=1.292 \\ p_{L1} = 10^5
    \end{bmatrix},
\end{align}
We suppose that a shock is formed with a Mach number equal to $M=10$.
Since the Mach number is defined as $M=\frac{\omega}{c}$, where $c = \sqrt{\frac{\gamma p_{L1}}{\rho_{L1}}}$ is the speed of sound in the pre-shock state (and $\gamma = 1.4$) and $\omega$ is the velocity of the shock, we can determine the right part of the solution for $x\geq 2$, which is given by:
\begin{align}
    \begin{bmatrix}
    u_{R1} = 2 c  \frac{1 - M^2}{M(1+\gamma)}\\ 
    \rho_{R1}=\frac{M^2(1 + \gamma)}{M^2(\gamma-1) +2} \rho_{L1} \\ 
    p_{R1} = \frac{2 \gamma M^2 + 1 - \gamma}{1 + \gamma} p_{L1}
    \end{bmatrix},
\end{align}
The shock then moves from the right to the left; on the left side of the domain, reflexive boundary conditions are imposed, whereas Dirichlet boundary conditions are imposed on the right side of the domain, with the values being fixed by the right state.

For this problem, we can determine the exact solution up to a certain time $T_{\mathrm max}$.
Indeed, up to a time $T_{\mathrm{sym}}=\frac{2}{\omega}$, the exact solution is given by the left state for $x < 2 - \omega t$ and by the right state for $x \geq 2 - \omega t$.
At the time $T_{\mathrm{sym}}$, the shock reflects on the left boundary.
Due to the reflexive boundary conditions, another shock is obtained, for which the velocity on the new left state $u_{L2}$ is equal to zero, whereas the right state is given as previously, that is:
\begin{align}
    \begin{bmatrix}
    u_{R2} = u_{R1}\\ 
    \rho_{R2} = \rho_{R1} \\ 
    p_{R2} = p_{R1}
    \end{bmatrix},
\end{align}
Using the Rankine-Hugoniot condition, one may determine $\omega_2$ the velocity in the newly formed shocked, that is:
\begin{align*}
    \omega_2 = u_{R1} \frac{3-\gamma}{4} + \frac{1}{2} \sqrt{ \frac{(u_{R1}(\gamma+1))^2}{4}+ 4 \frac{\gamma p_{R1}}{\rho_{R1}}}
\end{align*}
and the new left state, given by:
\begin{align}
    \begin{bmatrix}
    u_{L2} = 0\\ 
    \rho_{L2} = \frac{\rho_{R1}(\omega_2 - u_{R1})}{\omega_2} \\ 
    p_{L2} = \rho_{R1} u_{R1}( u_{R1} - \omega_2) + p_{R1}
    \end{bmatrix},
\end{align}
Then, for $t \in [T_{\mathrm{sym}}, T_{\mathrm max}]$, where $T_{\mathrm max}$ is the time at which the shock reaches the right boundary, \ie $\, T_{\mathrm max} = \frac{5}{\omega_2} + T_{\mathrm{sym}}$, the solution is given by the left state $L2$ for $x < \omega_2( t - T_{\mathrm{sym}})$ and by the right state $ x \geq \omega_2 ( t - T_{\mathrm{sym}} )$.

To determine the accuracy of the scheme using three-dimensional cells, the domain $\Omega_{1D}$ is enhanced for the numerical tests into a three-dimensional domain, given by $\Omega = \Omega_{1D} \times [0, 10h] \times [0, 10h]$, where $h$ is the space step into the $x$ direction, equal to $h=\frac{5}{2^n}$ where $n$ will be varying in order to built refined meshes.
This choice of length and height of the domain is motivated by the fact that the solution should be independent of those two directions.
It is possible to take a low amount of cells in these directions to reduce the number of unknowns, while still keeping the mesh step in these directions proportional to the mesh step in the $x$ direction.
The mesh is then built as follows:
\begin{itemize}
    \item first, a $10 \times 10$ Cartesian grid of the $y,z$ plan is built, using squares of side length equal to $h$ ;
    \item then, a distortion is applied to this grid ;
    \item an extrusion in $2^n$ cells is then applied to the grid into the $x$ direction, to retrieve the domain $\Omega_{1D}$ ;
    \item finally, the hexahedra obtained in the previous step are refined into either two prisms or two pyramids, depending on the kind of cell types wanted.
\end{itemize}
We also test this problem on a hybrid mesh, composed of hexahedral, pyramidal and prismatic cells. 
To do so, the considered domain is this time $\Omega' = \Omega_{1D} \times [0, 9h] \times [0, 9h]$.
The mesh is constructed in the following way: first, three meshes of height $3h$ are build then glued together along the slices so as to completely mesh the domain $\Omega'$.
These meshes are constructed as previously, and are composed in order of hexahedra, pyramids and prisms.
As a result, the global mesh is deformed, so that the layers of meshes are not stacked in a Cartesian way, but rather by forming layers closer to what can be found in practical cases of use of hybrid meshes.
This transformation sends in particular $\partial \Omega$ over $\partial \Omega$, and is given by 
\begin{align}
   T(x,y,z) =
   \begin{pmatrix}
      1. + 0.2 \sin( \frac{\pi x}{5} ) \sin( \frac{ 2 \pi z}{9 h} ) x \\
      y \\
      z
   \end{pmatrix}
\end{align}

The initial and boundary conditions are then prescribed as previously: the two last components of the velocity are set to zero. 
The analytical solution can be obtained in the same fashion.
On the newly created boundary, wall boundary conditions of type slip are enforced.

MUSCL approximations for both convection operators are chosen.
A stabilization term is added to the discrete momentum balance equation, which is of the form:
\begin{align*}
    \sum_{\substack{\edged \in \edgesd(D_\edge),\\ \edged=D_\edge|D_{\edge'}}} \nu_{\edged}^{n+1}(u_{\edge,i}^n - u_{\edge',i}^n)
\end{align*}
where $\nu_{\edged}$ is an user fixed parameter set to $\nu_{\edged} = \frac{|u_{R1} \rho_{R1}|}{50}$.
The computation is ran until $T=0.015$, which is slightly lower than $T_{\mathrm max}$.

The convergence rate in $L^1(\Omega)$ for the pressure and the density, as well as the convergence rate in discrete $L^1(\Omega)^d$ norm for the velocity are computed at times $t_0=0.003$ (which is before the shock bounces on the left part of the domain) and $t_1=0.015$.
The latter norm is computed by summing on each cell the volume of the cell multiplied by the value of the function at the gravity center of the cell.
Moreover, we use in each case a relative norm, that is we divide the norm of the error by the norm of the exact solution over the whole domain.
Indeed, since the length and height of the domain are scaled with the space step, the error norms (as well as the norm of the exact solution) are scaled with an additional factor proportional to $h^2$.
By taking the relative norms, we ensure that the computed convergence rate is not impacted by this factor.
The sequence of meshes is built by taking $n \in \llbracket 6,11 \rrbracket$.
The error curves for the different variables on prismatic meshes (resp. pyramidal, hybrid) can be found on Fig. \ref{fig:error_choc_prism} (resp. Fig. \ref{fig:error_choc_pyr}, Fig.\ref{fig:error_choc_hyb}.
These results are satisfying, since on both sequence of meshes and for all quantities a convergence rate close to 1 is recovered.

\begin{figure}
    \centering
    \includegraphics[width = 0.3 \textwidth]{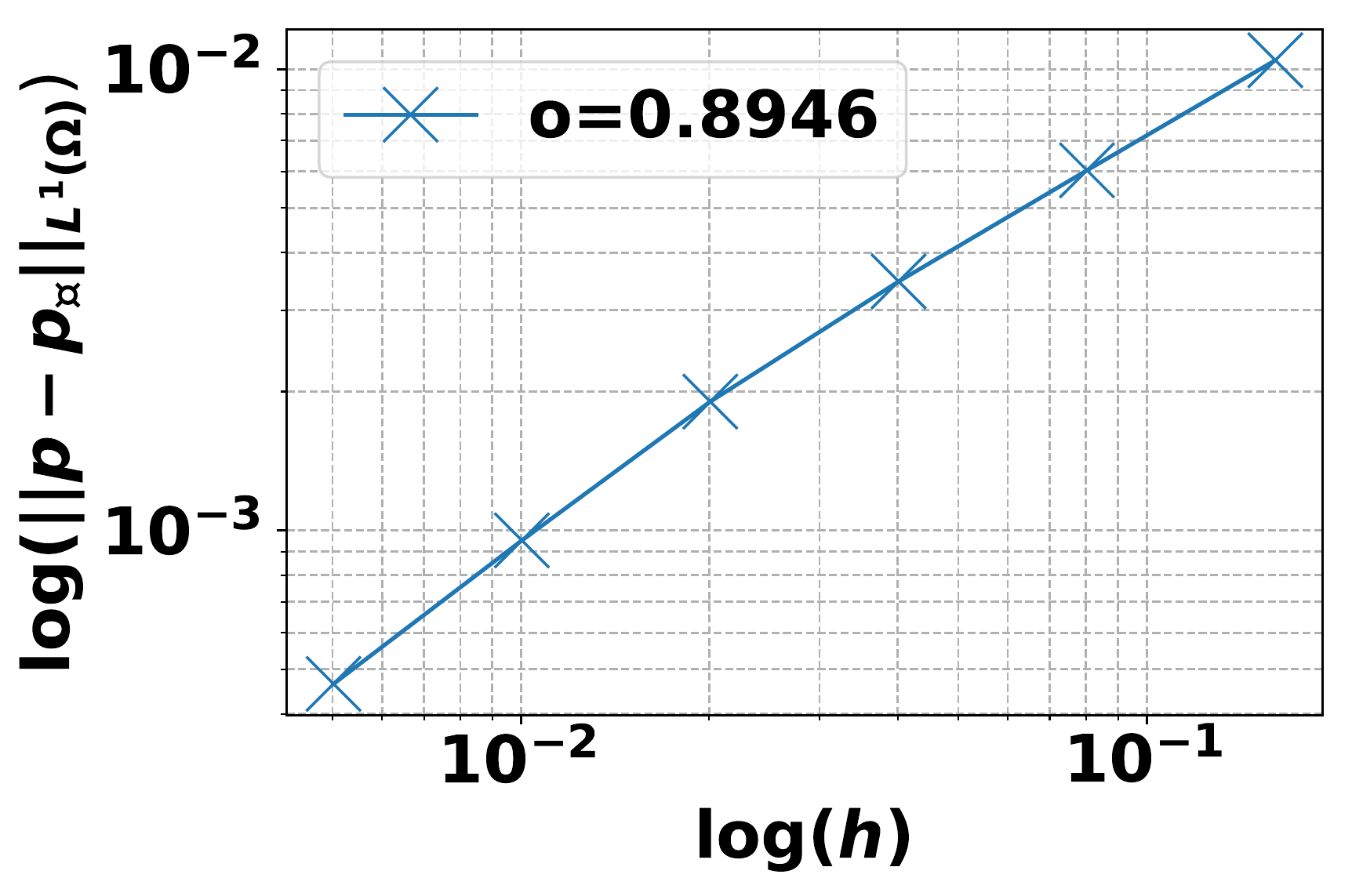}%
    \includegraphics[width = 0.3 \textwidth]{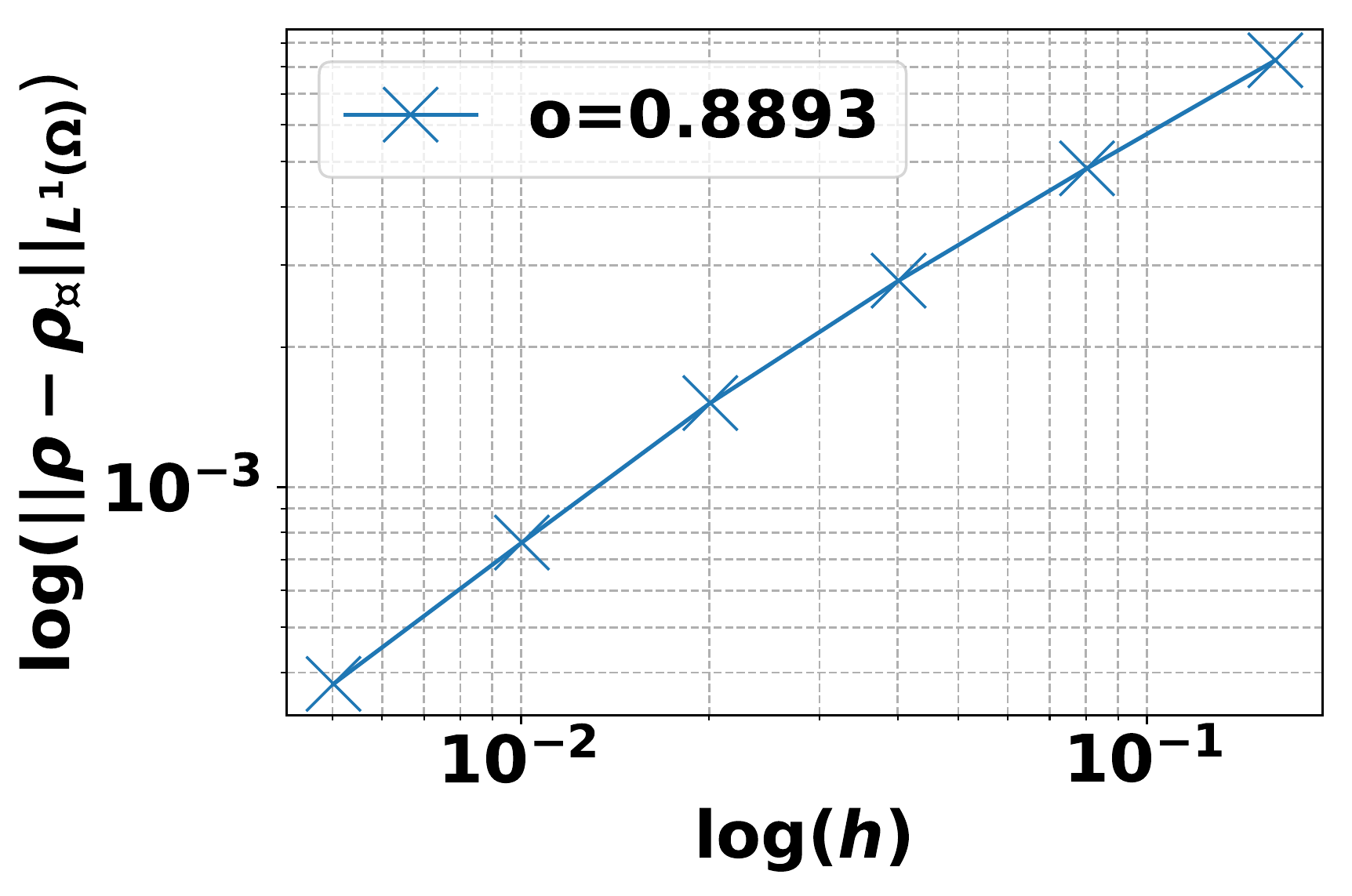}%
    \includegraphics[width = 0.3 \textwidth]{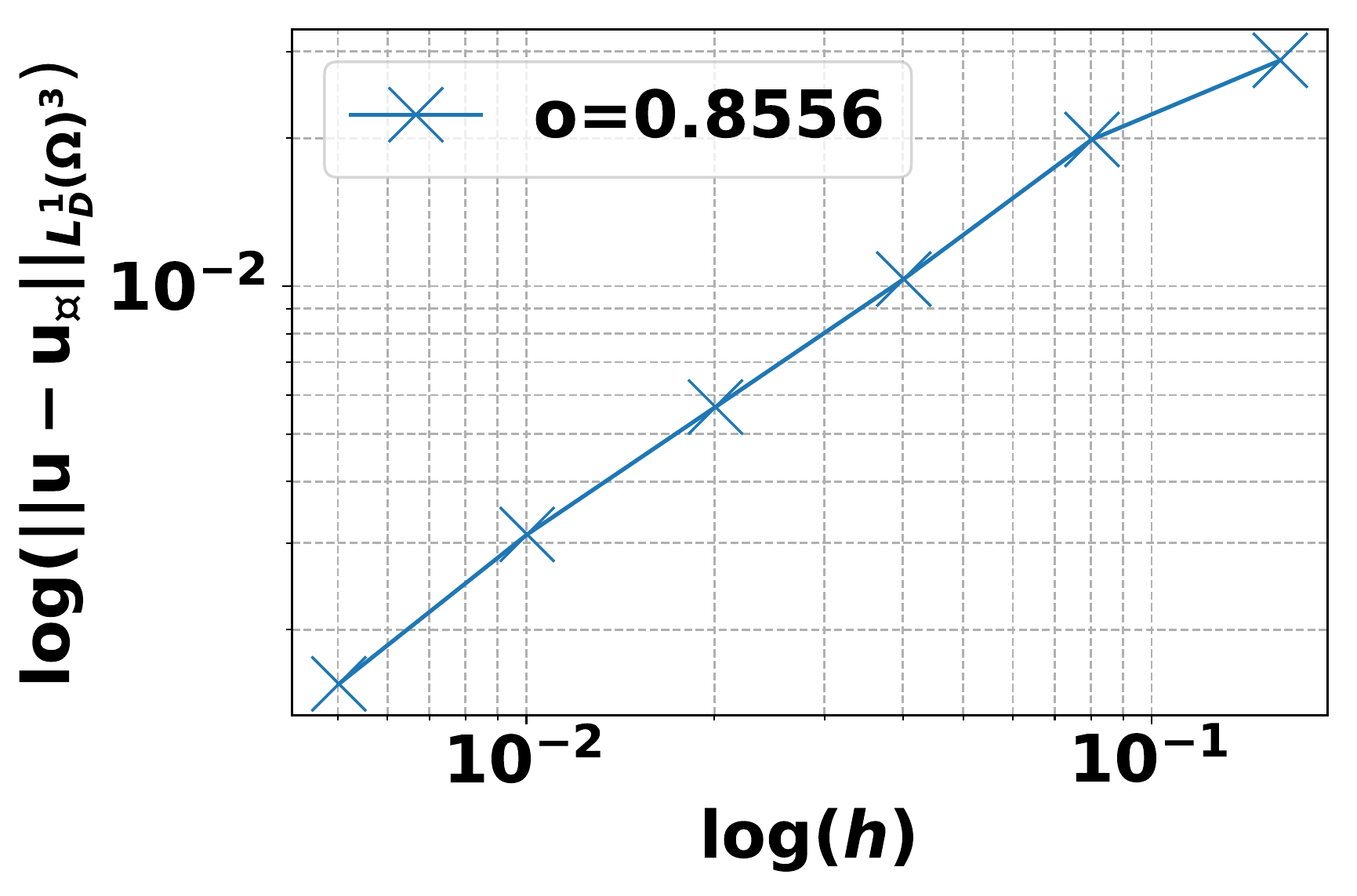}%
    \\
    \includegraphics[width = 0.3 \textwidth]{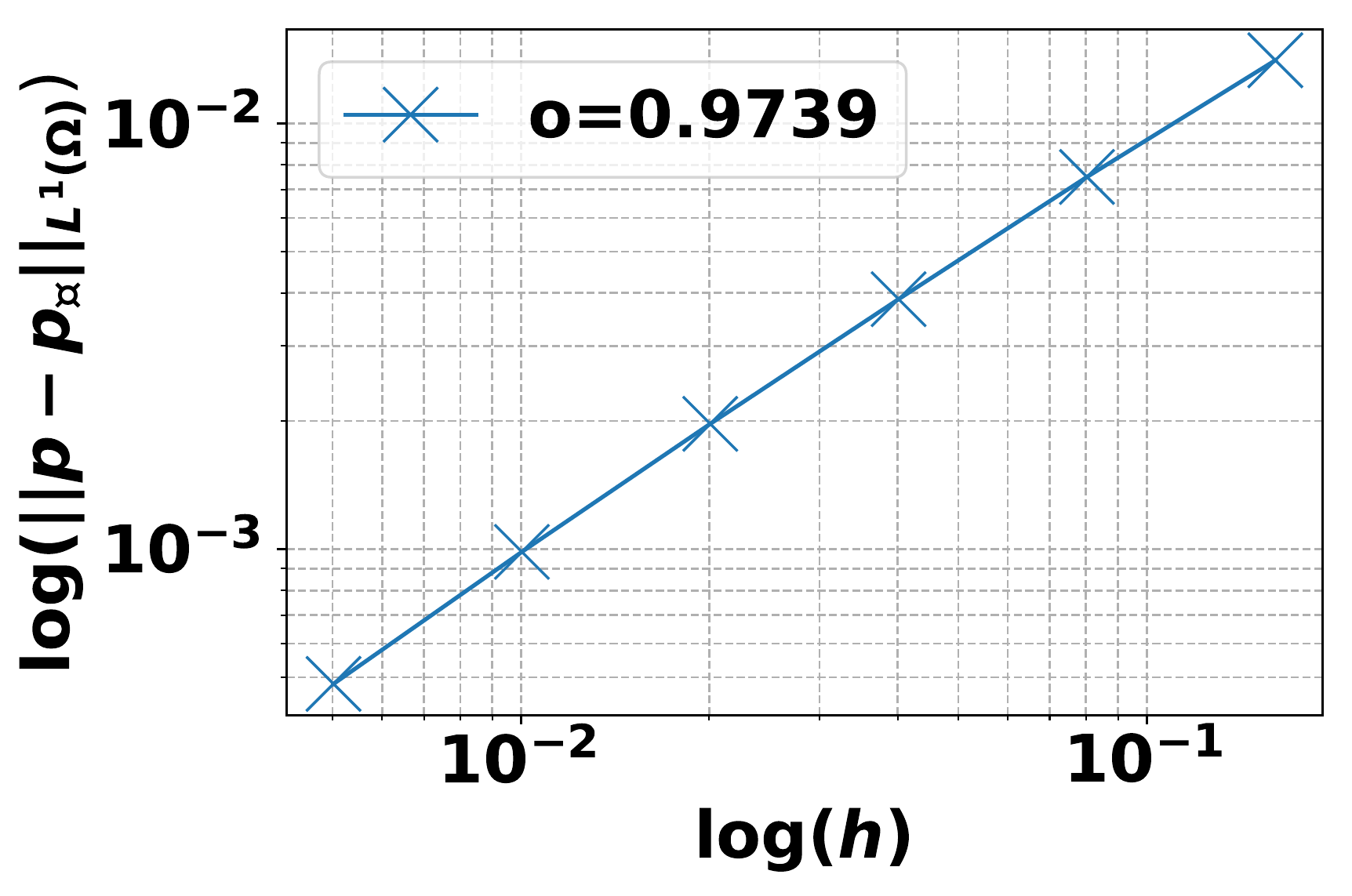}%
    \includegraphics[width = 0.3 \textwidth]{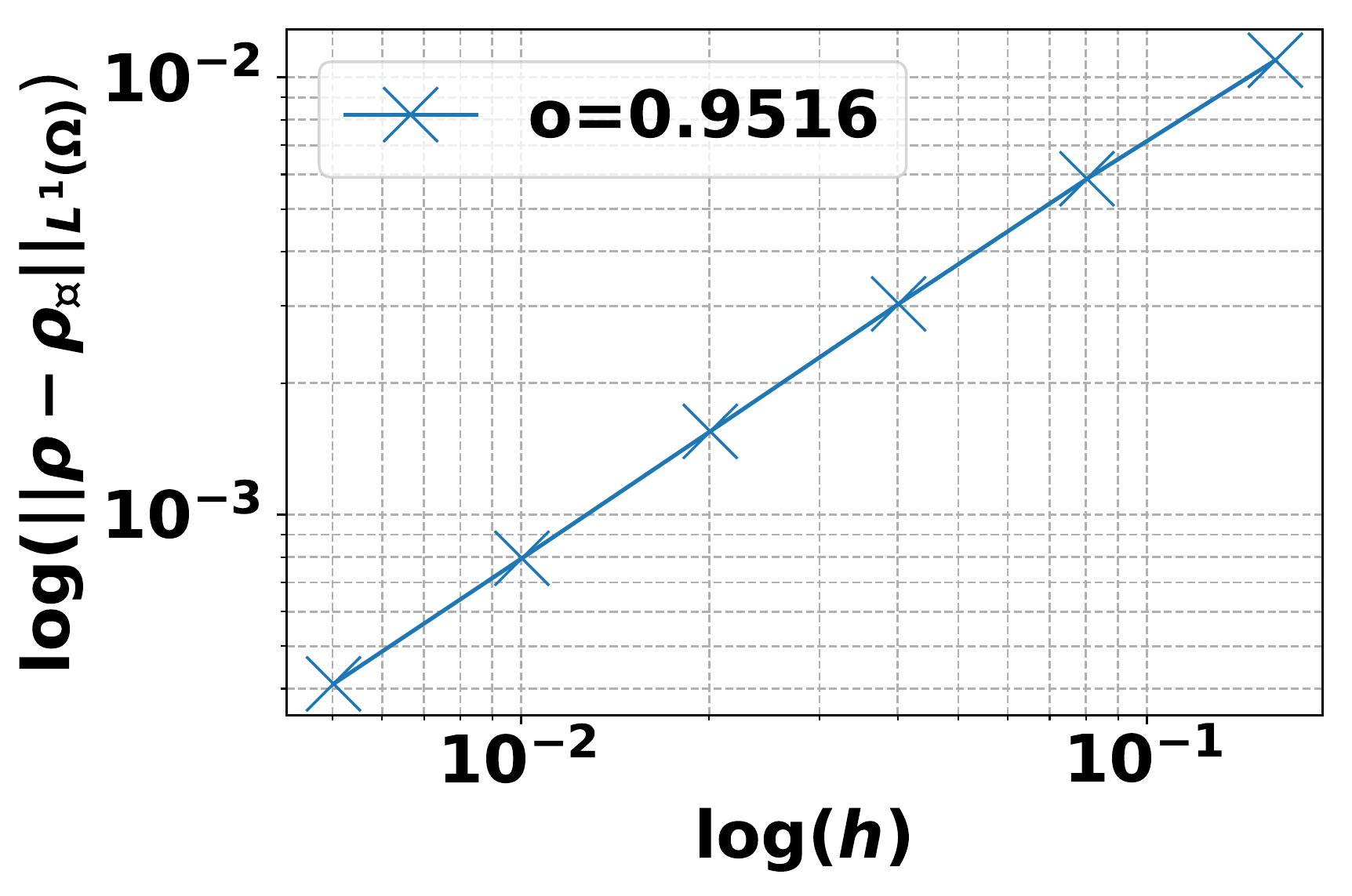}%
    \includegraphics[width = 0.3 \textwidth]{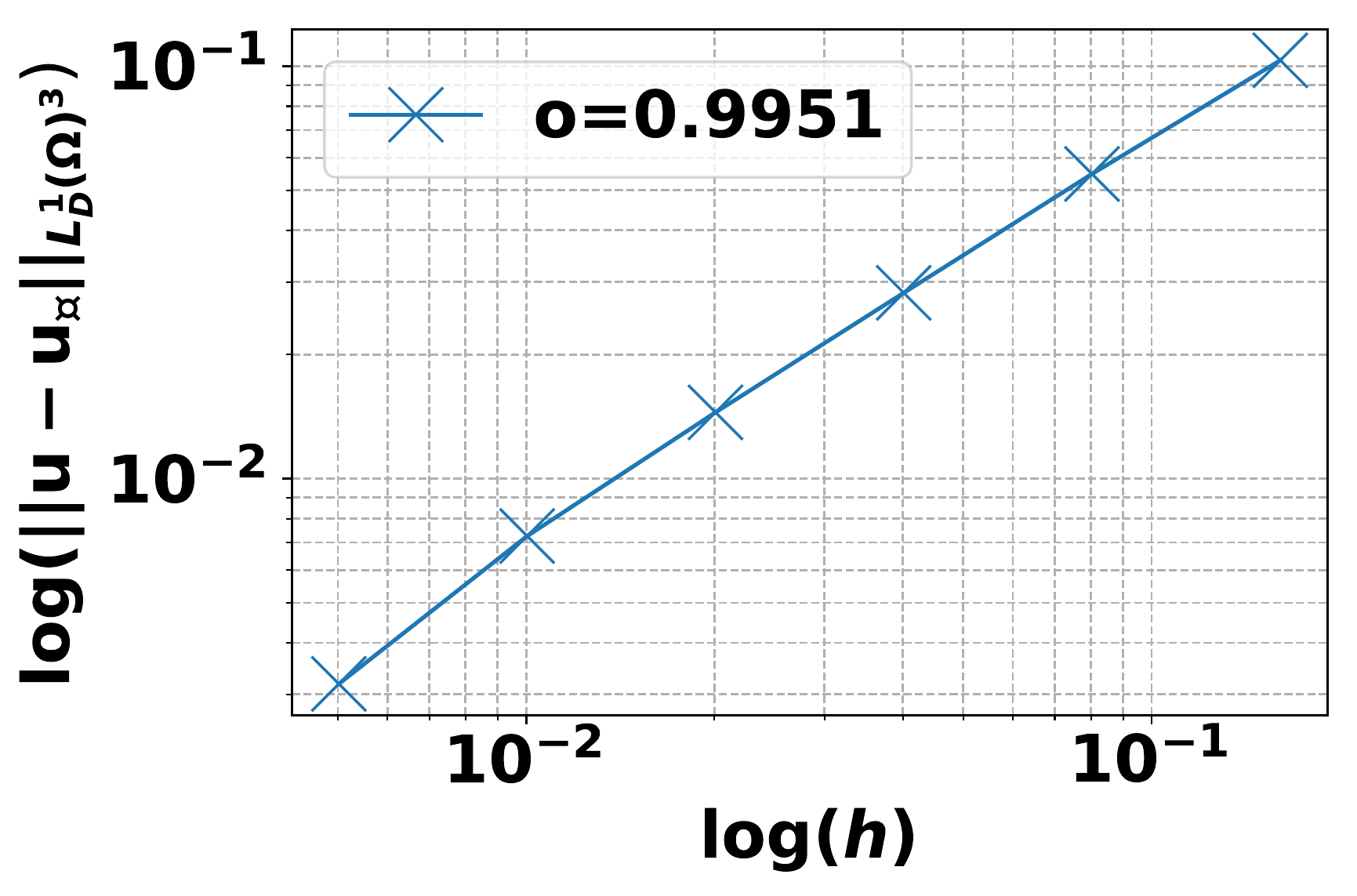}%
    \caption{Convergence rate for the problem of a shock with reflexive boundary condition on a sequence of prismatic meshes. From left to right : pressure in $L^1(\Omega)$ norm, density in $L^1(\Omega)$ norm, velocity in discrete $L^1(\Omega)^d$ norm. From top to bottom : error at time $t=0.003$, errors at time $t=0.015$. }
    \label{fig:error_choc_prism}
\end{figure}

\begin{figure}
    \centering
    \includegraphics[width = 0.3 \textwidth]{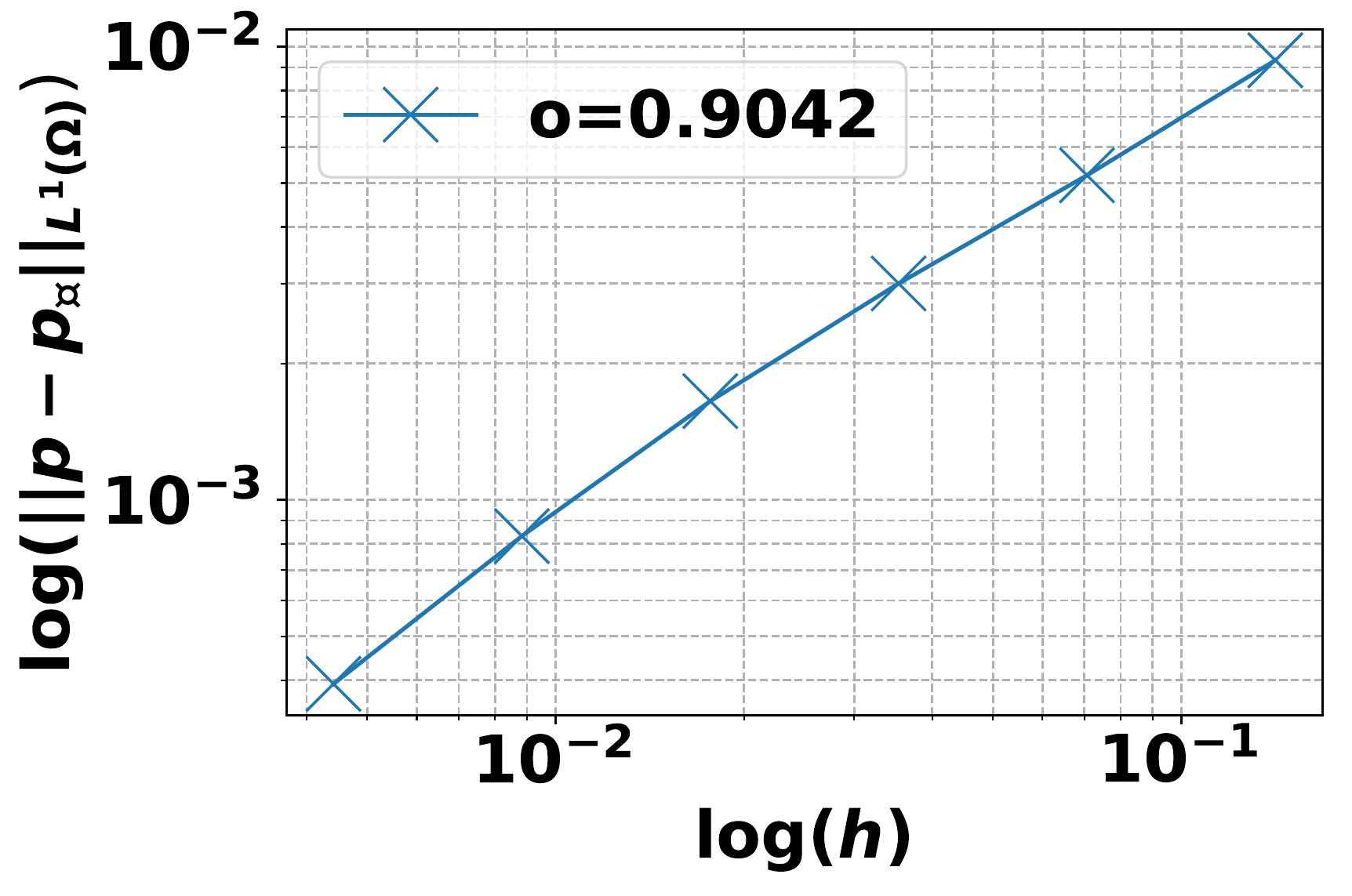}%
    \includegraphics[width = 0.3 \textwidth]{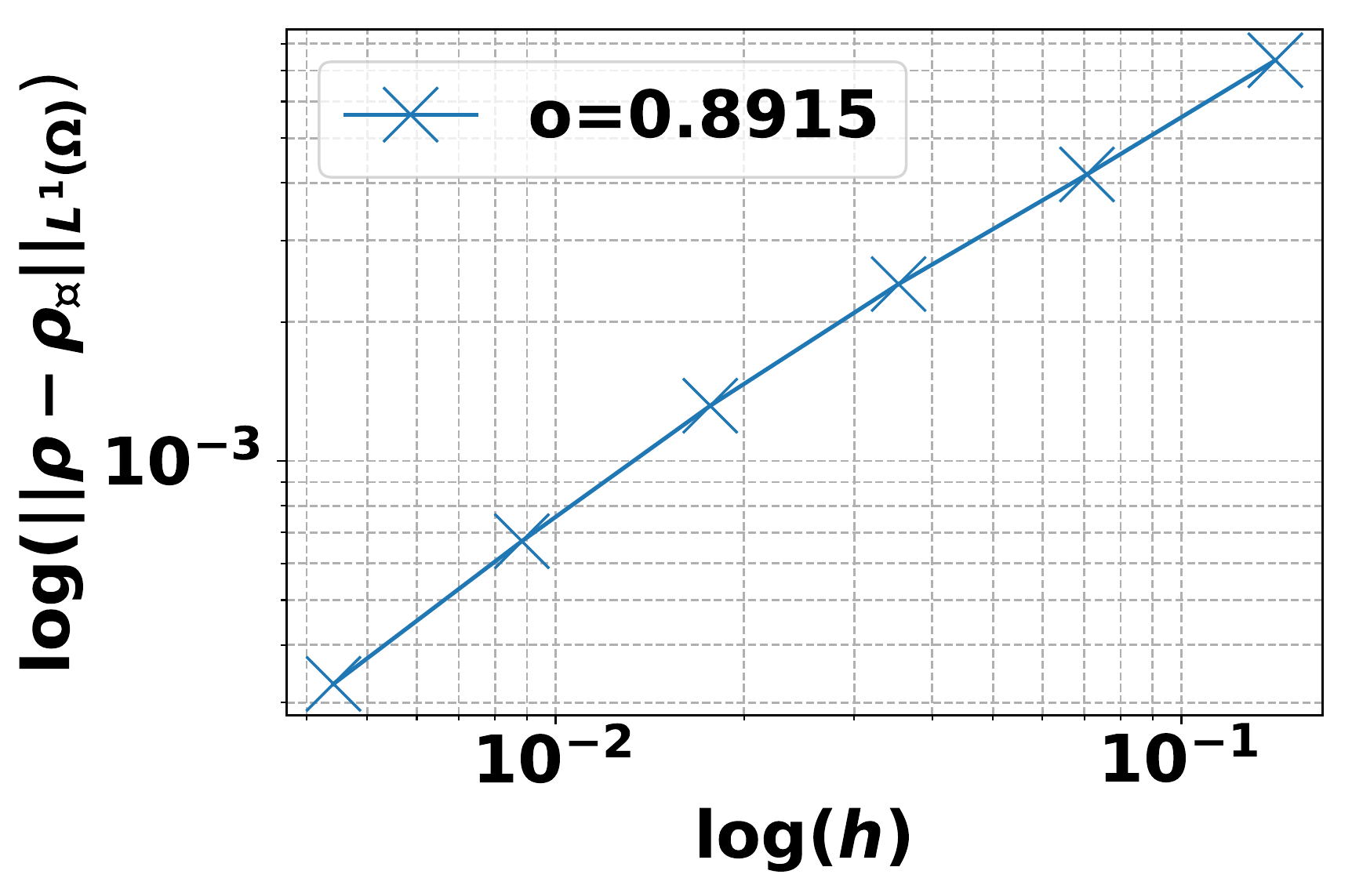}%
    \includegraphics[width = 0.3 \textwidth]{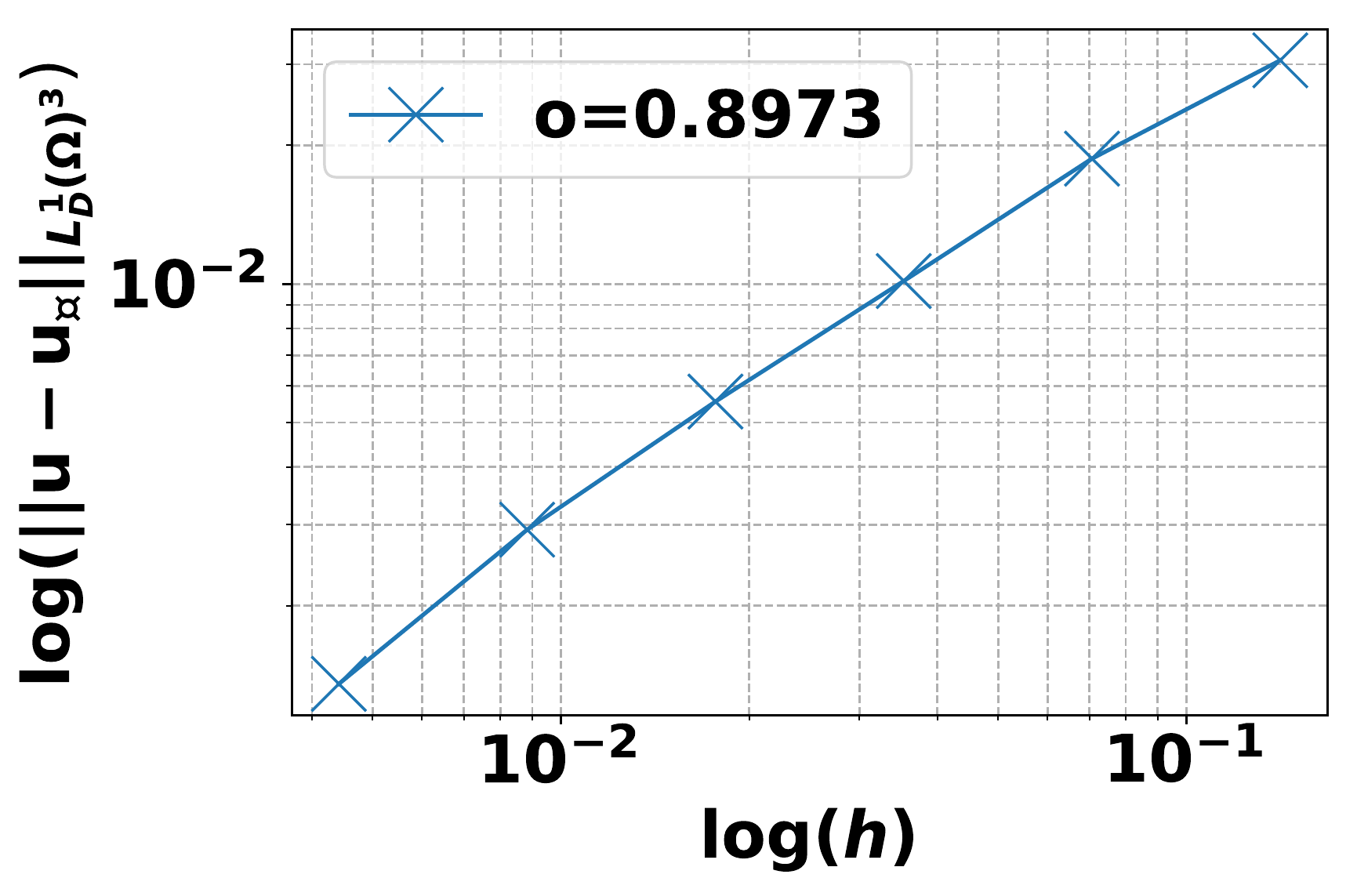}%
    \\
    \includegraphics[width = 0.3 \textwidth]{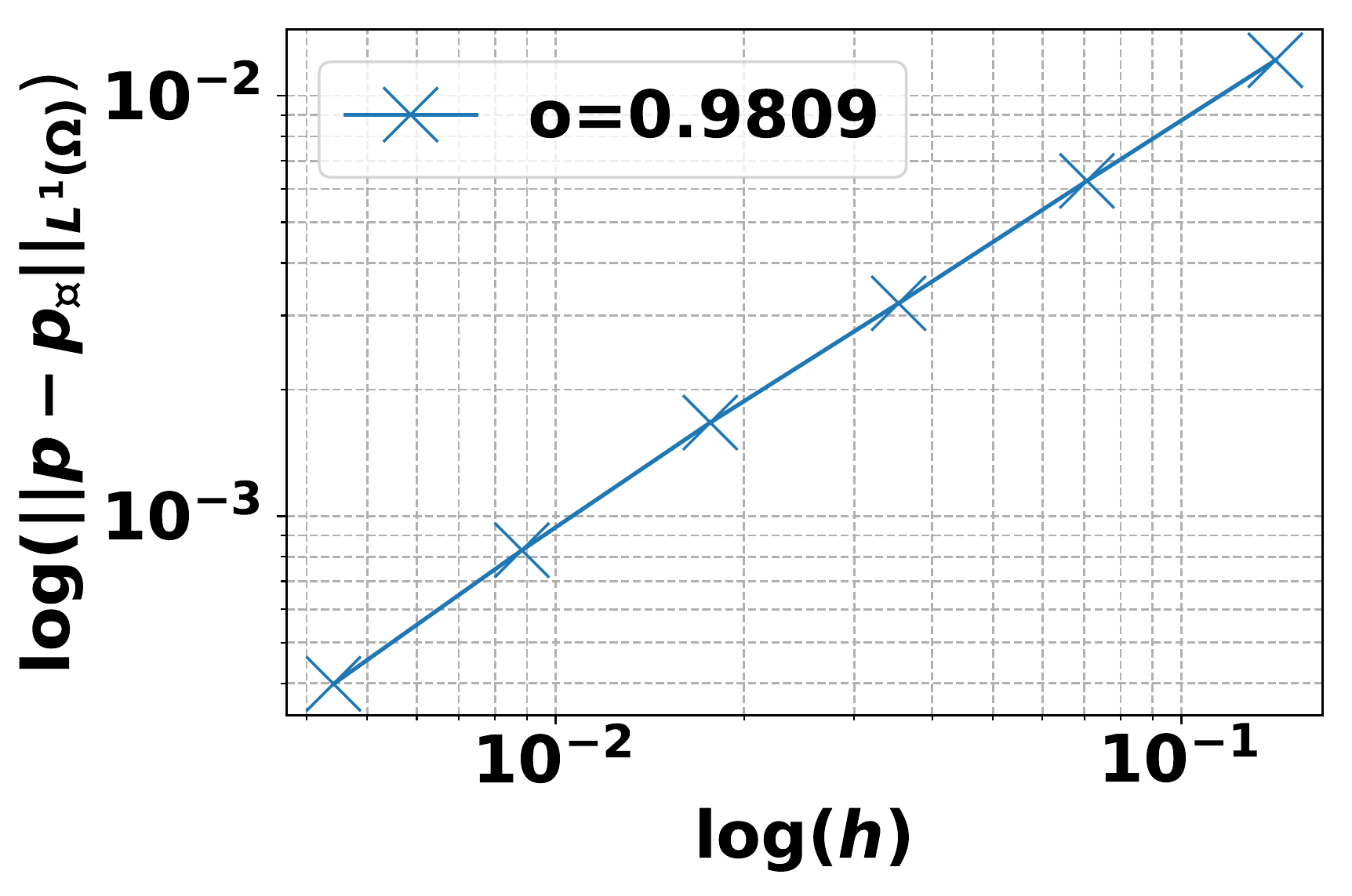}%
    \includegraphics[width = 0.3 \textwidth]{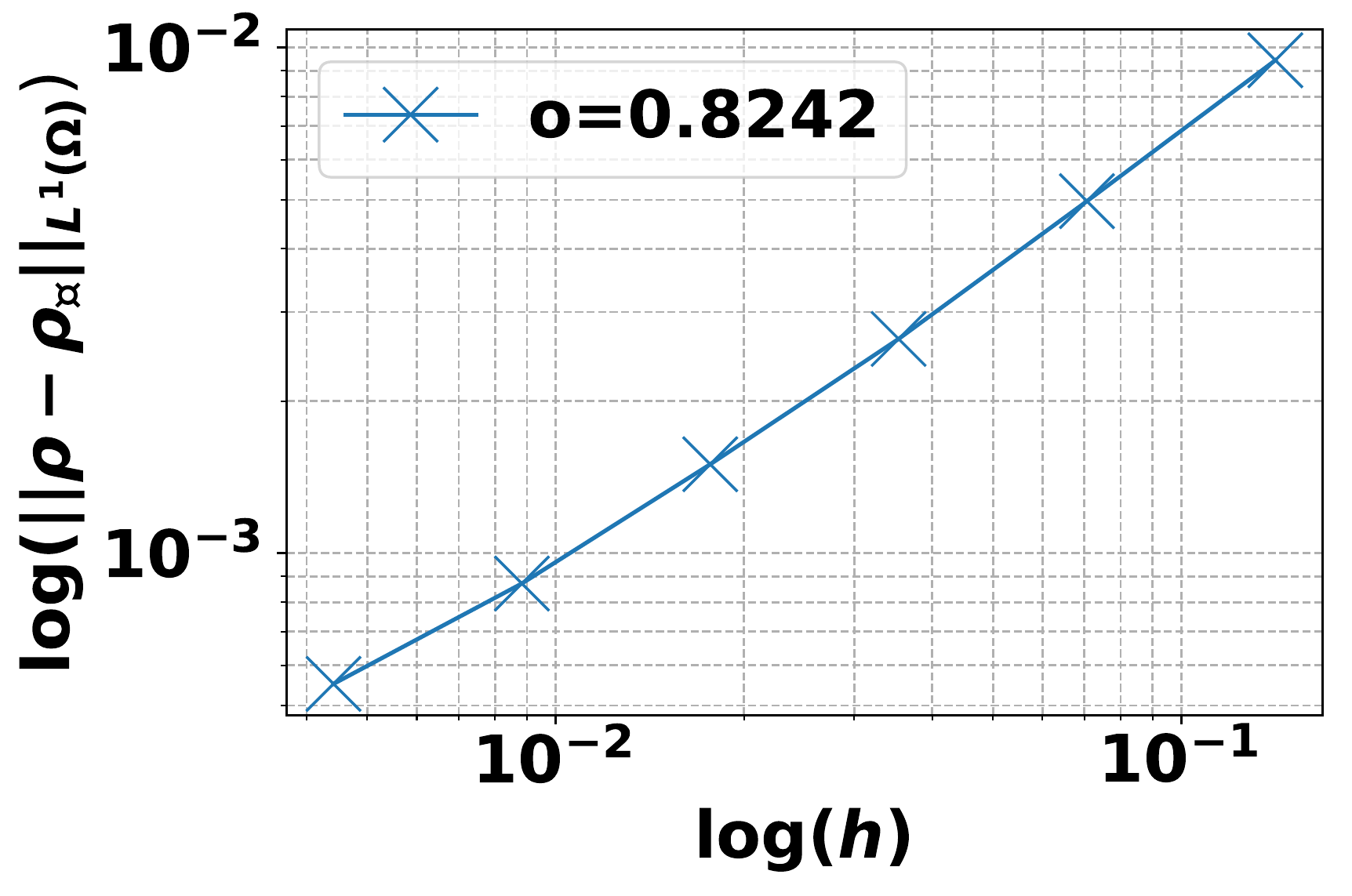}%
    \includegraphics[width = 0.3 \textwidth]{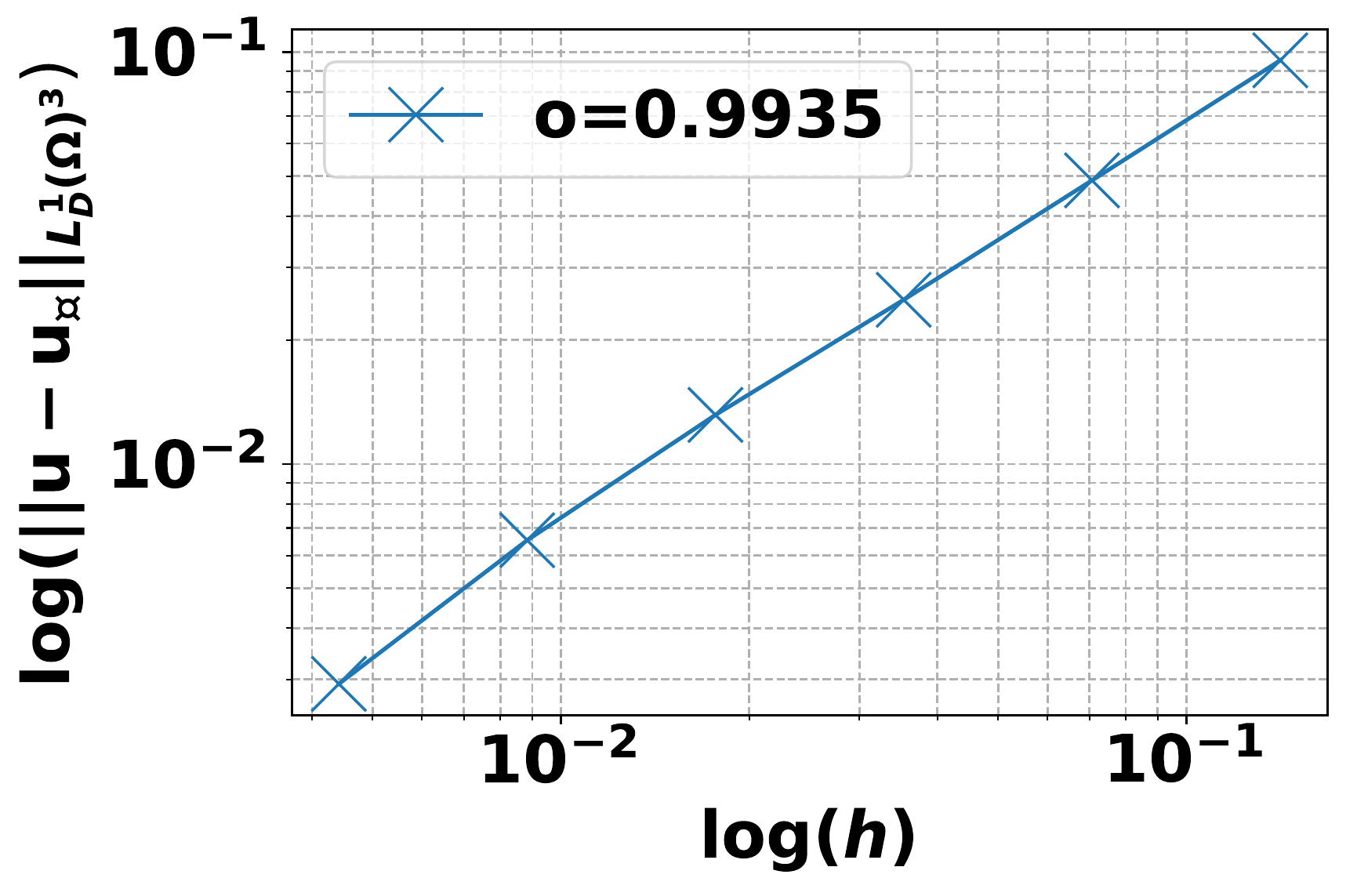}%
    \caption{Convergence rate for the problem of a shock with reflexive boundary condition on a sequence of pyramidal meshes. From left to right : pressure in $L^1(\Omega)$ norm, density in $L^1(\Omega)$ norm, velocity in discrete $L^1(\Omega)^d$ norm. From top to bottom : error at time $t=0.003$, errors at time $t=0.015$. }
    \label{fig:error_choc_pyr}
\end{figure}

\begin{figure}
    \centering
    \includegraphics[width = 0.3 \textwidth]{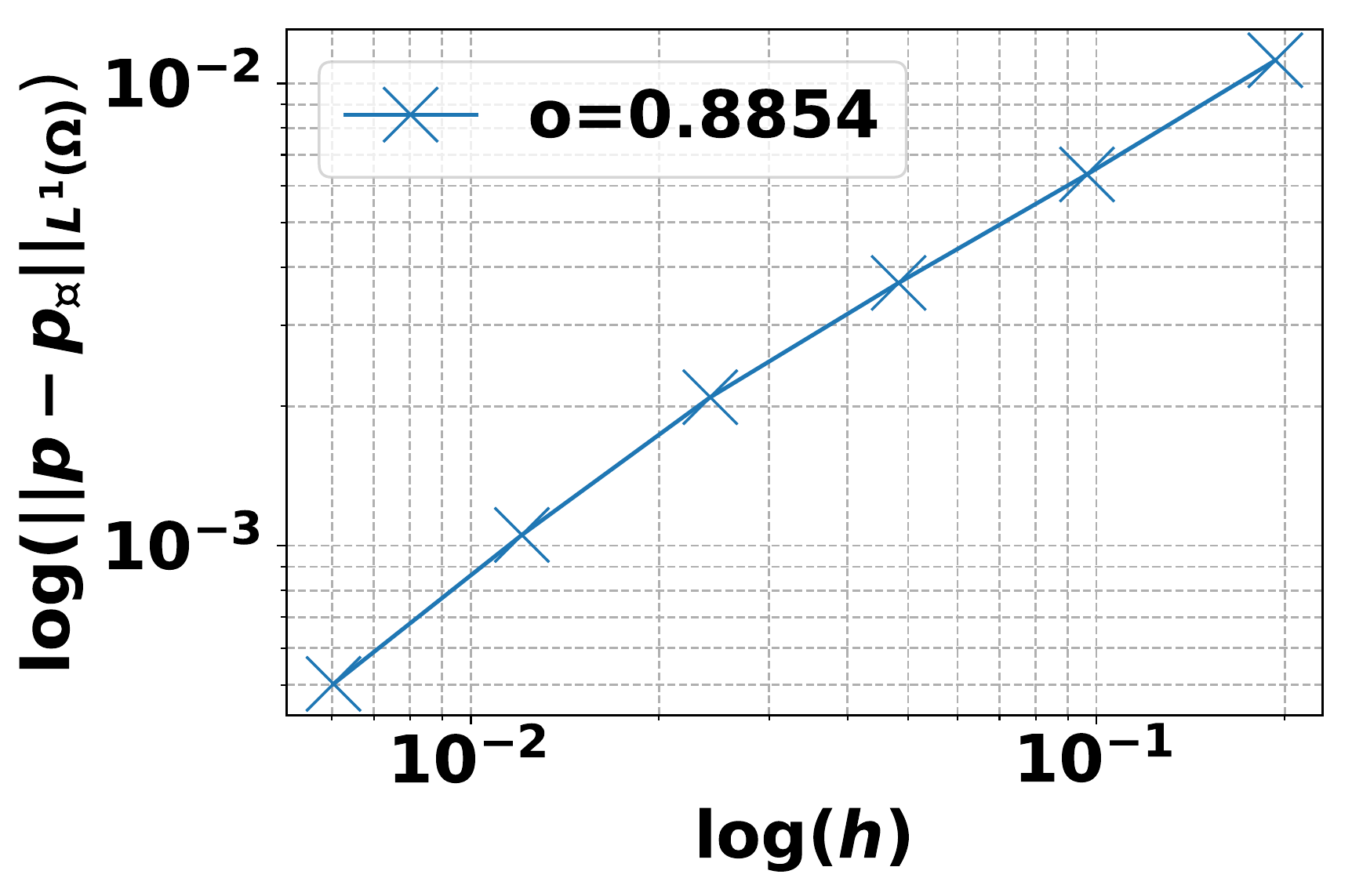}%
    \includegraphics[width = 0.3 \textwidth]{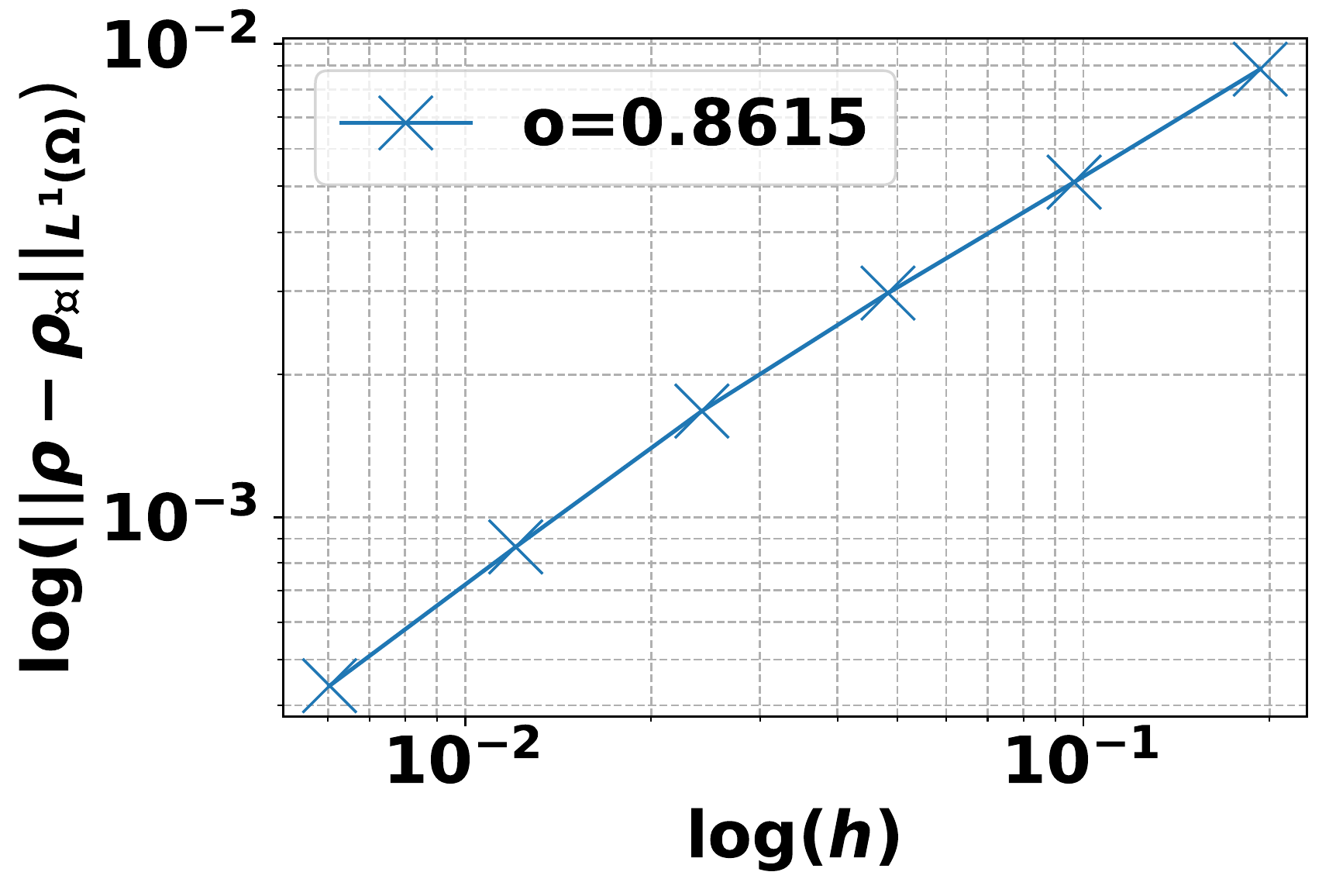}%
    \includegraphics[width = 0.3 \textwidth]{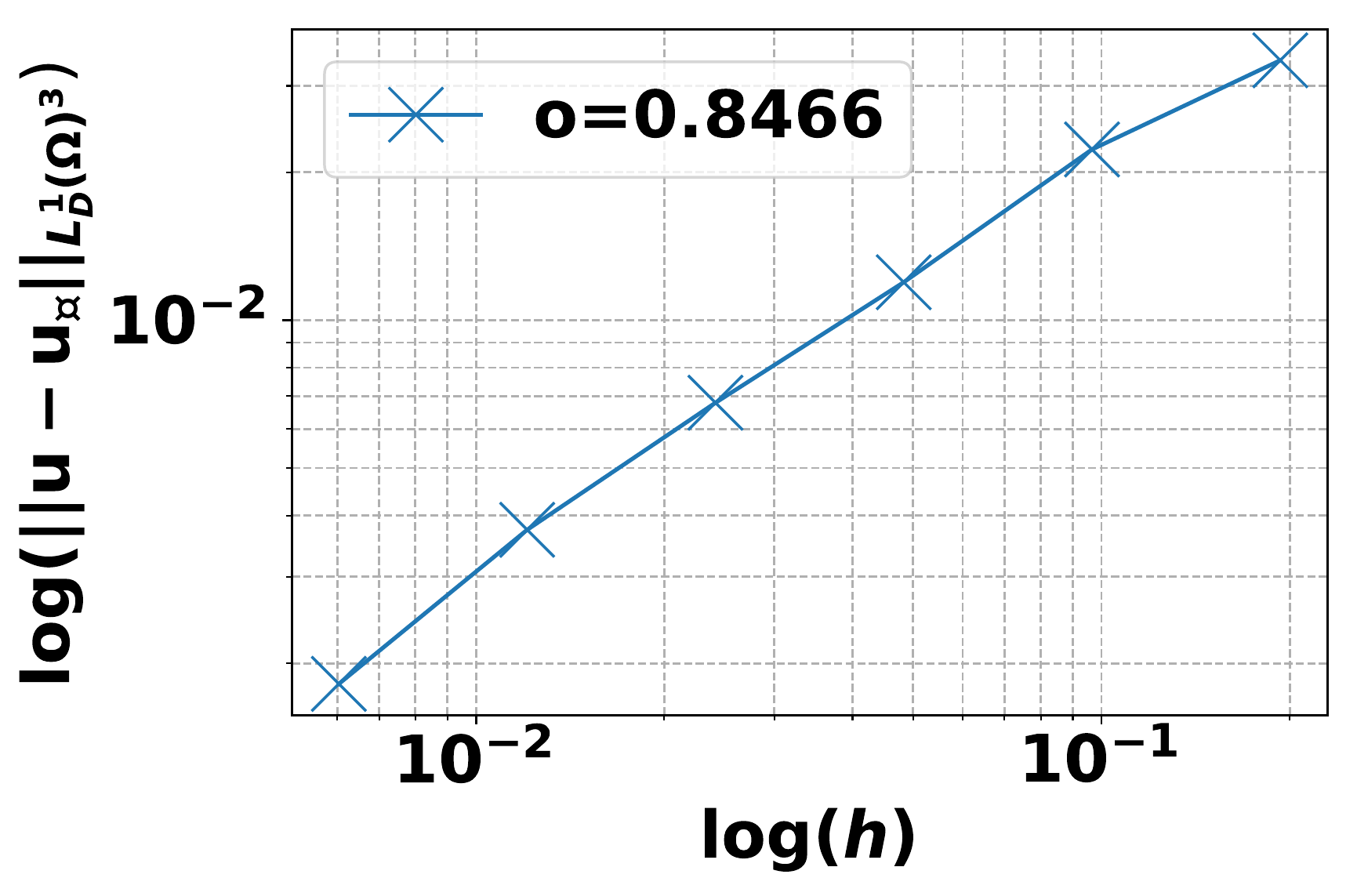}%
    \\
    \includegraphics[width = 0.3 \textwidth]{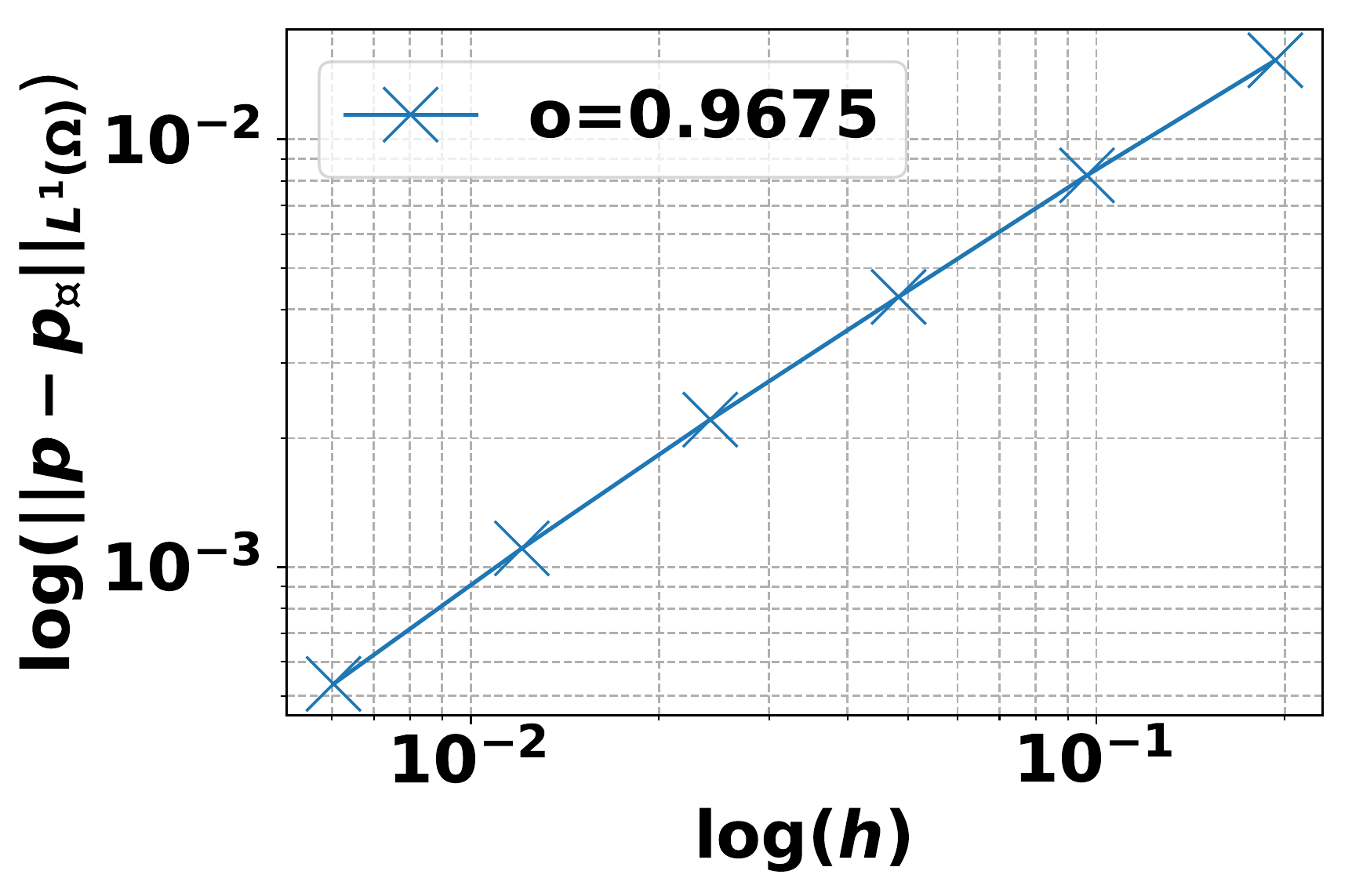}%
    \includegraphics[width = 0.3 \textwidth]{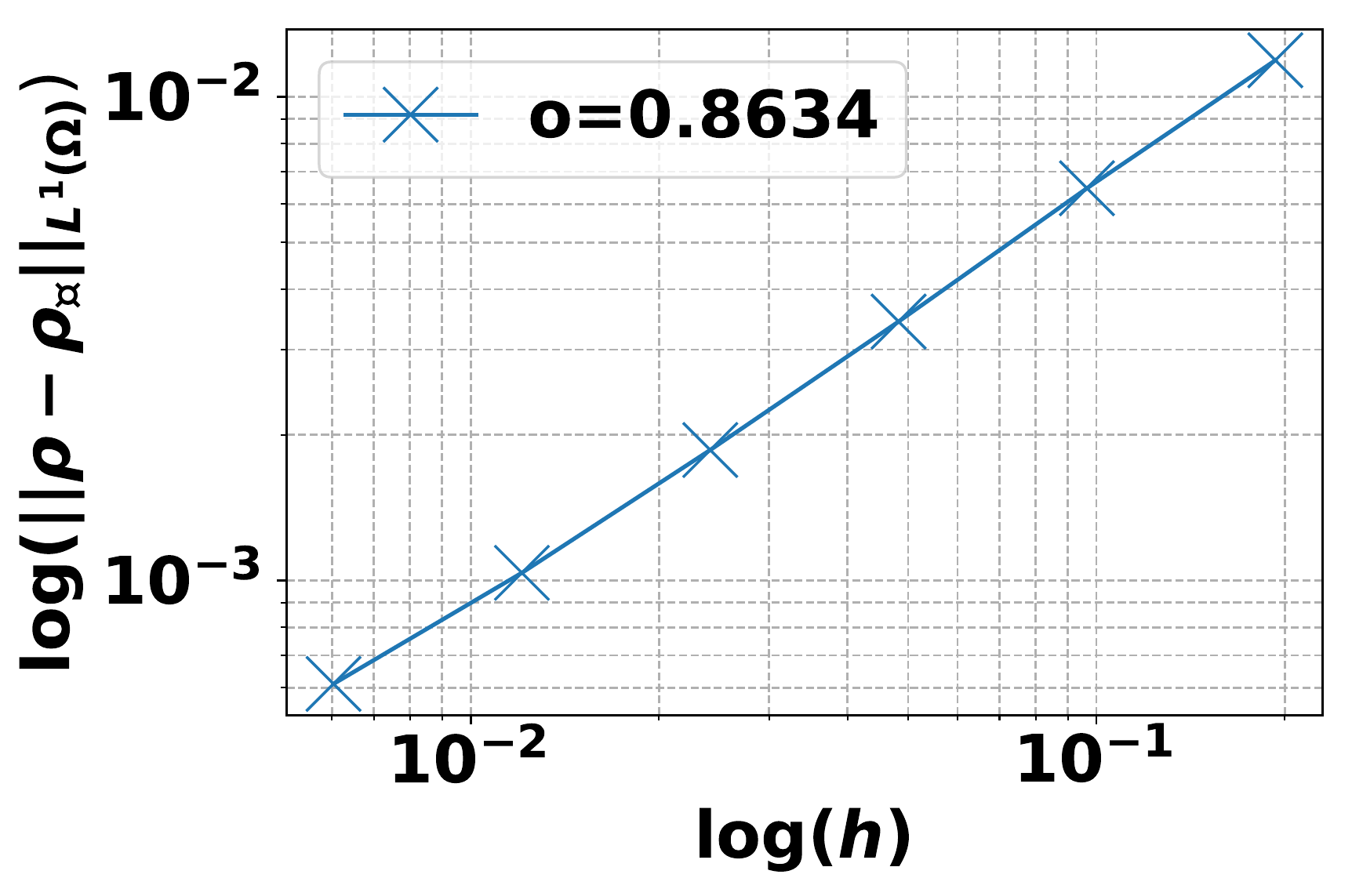}%
    \includegraphics[width = 0.3 \textwidth]{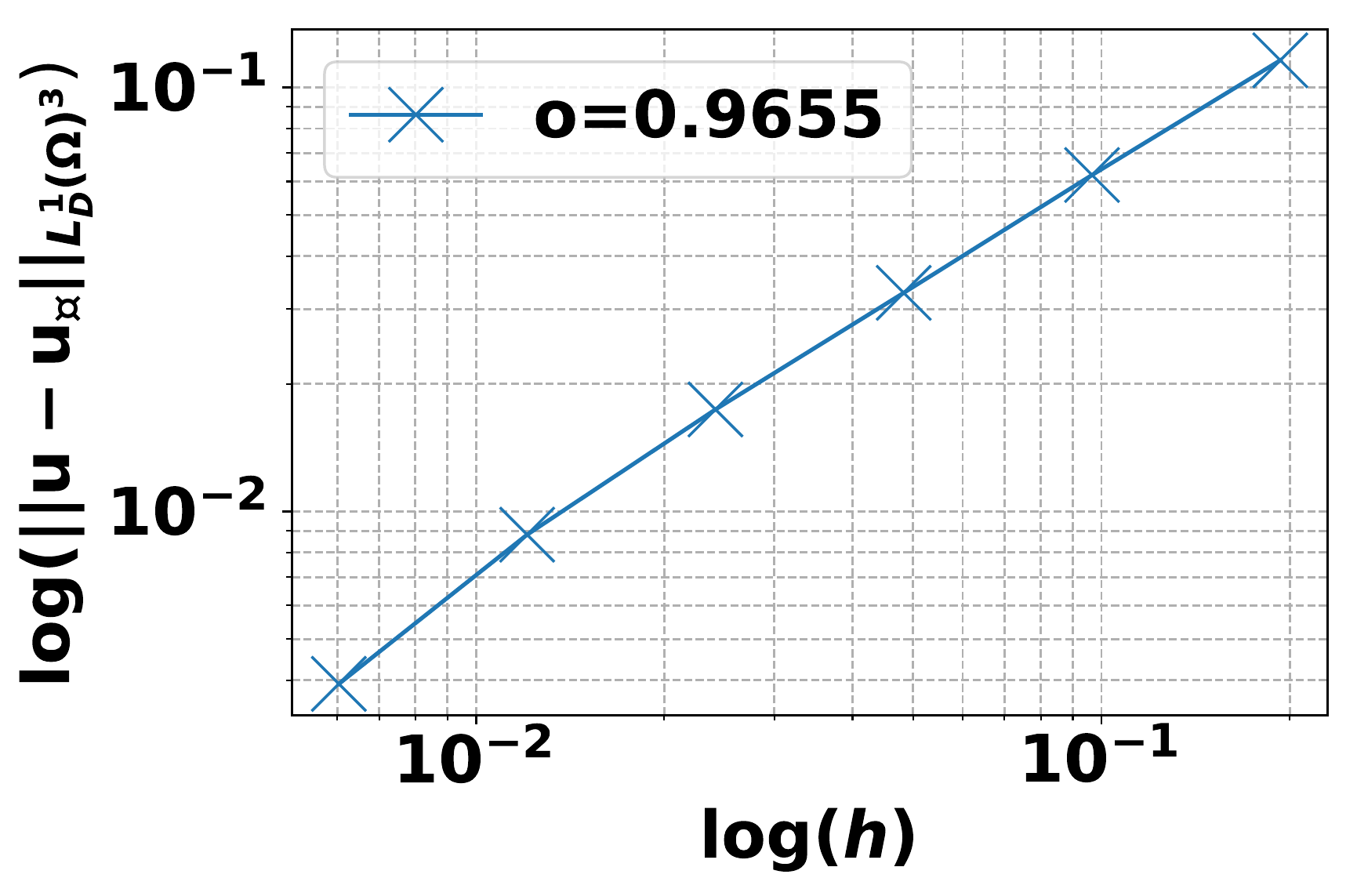}%
    \caption{Convergence rate for the problem of a shock with reflexive boundary condition on a sequence of hybrid meshes. From left to right : pressure in $L^1(\Omega)$ norm, density in $L^1(\Omega)$ norm, velocity in discrete $L^1(\Omega)^d$ norm. From top to bottom : error at time $t=0.003$, errors at time $t=0.015$. }
    \label{fig:error_choc_hyb}
\end{figure}

\appendix
\section{Technical lemmas}
We recall here some results obtained in  \cite{gal-19-wea} and \cite{gal-22-lax} which generalize the Lax-Wendroff consistency to multidimensional colocated or staggered grids.

We rephrase here the consistency results proven in \cite{gal-22-lax}, in a form that is adapted to the present setting. 
We suppose that:
\begin{equation}\label{pb-general}
    \Omega \subset \xR^d, \; d= 1, 2,3, \ T \in (0,+\infty),
    \ p \in \xN^\ast,\ \beta \in C^0(\xR^p, \xR),\ \bff \in C^0(\xR^p, \xR^d),
\end{equation}
and we consider the conservative convection operator $\bar{\mathcal C}(\bar U)$ acting on a vector $\bar U \in \xR^p$ of functions, real-valued, and defined (in the distributional sense), for $\bar U \in L^\infty(\Omega\times(0,T), \xR^p)$, by: 
\begin{align} \label{def:op-conv}
    \bar{\mathcal C} (\bar U): &\quad \Omega\times(0,T) 	\to \xR, \nonumber
    \\ & \quad 
    (\bfx,t) \mapsto \partial_t\bigl(\beta(\bar U(\bfx,t))\bigr) + \dive\bigl(\bff(\bar U(\bfx,t))\bigr).
\end{align}

Let us denote by $(\mathcal P\exm)\mnn$ a sequence of meshes of the domain $\Omega$, each mesh consisting of a set of disjoint open polyhedral or polygonal open subsets of $\Omega$, whose union of closures is $\bar \Omega$.
We denote by $\delta(\mathcal P)$ the space step, defined by
\[
\delta(\mathcal P\exm) = \max_{P \in \mathcal P\exm}\ \diam (P).
\] 
Let $\edgespart\exm$ denote the set of faces (in 3D, or edges in 2D) of the mesh, and $\edgespartint\exm$ denote the set of faces that are not located on the boundary $\partial\Omega$; for a given polyhedron (or polygon) $P \in \mathcal P\exm$, also called a cell, let $\edgespart(P)$ denote its set of faces (or edges).
Let  $\deltat\exm = \frac{T}{N_m} $, with $N_m \in \xN$, $\deltat\exm \to 0$ as $m \to +\infty$, and let $t_n = n \deltat\exm$ for $n \in \llbracket 0, N_m \rrbracket$. 
 
The unknown is supposed to be represented by a function $U \in L^\infty(\Omega\times(0,T), \xR^p)$ (we take $U= (\rho,\bfu)$ in Section \ref{sec:consis}). 
Note that the unknowns do not need to be piecewise-constant over the cells of the mesh and over the time steps.

\medskip 

The first result defines the weak consistency property, also referred to as consistency in the Lax-Wendroff sense, and provides a set of assumptions which are sufficient for a discrete convection operator (to be understood as "the discrete counterpart of a first order conservative differential operator") to enjoy this property.
The second result states a convergence result which is useful to check these assumptions, namely the convergence to zero of "discrete time and space tranlates" (see below for a definition) of a converging sequence of functions in $L^1$.
We write these theorem with specific notations for the space and time discretizations.
%
%

\subsection{Consistency results}

The discrete convection operator that we consider here takes the following form: 
\begin{align*}
    \mathcal C(U) : 
    & \quad
    \Omega\times(0,T) 	\to \xR,
    \\ & \quad 
    (\bfx,t) \mapsto
    \mathcal C(U)_P^n, \qquad \mbox{ for } \bfx \in P,\ P \in \mathcal P, \mbox{ and } t \in (t_n,t_{n+1}),\ n \in \llbracket 0, N-1 \rrbracket,
\end{align*}
with
\[
    \mathcal C(U)_P^n = \frac{\beta_P^{n+1} - \beta_P^n}{\deltat} + \frac 1 {|P|} \sum_{\zeta \in \edgespart(P)} |\zeta|\ \bfF_\zeta^n \cdot \bfn_{P,\zeta},
\]
where $\bigl\{\beta_P^n,\ P \in \mathcal P,\ n \in \llbracket 0, N \rrbracket \bigr\}$ is a family of real numbers, $\bigl\{\bfF_\zeta^n,\ \zeta \in \edgespart,\ n \in \llbracket 0, N-1 \rrbracket \bigr\}$ is a family of real vectors of $\xR^d$ and $\bfn_{P,\zeta}$ stands for the normal vector to $\zeta$ pointing outward $P$.
Note that this form of the flux implies that the scheme is conservative.
Of course, the real numbers $\bigl\{\beta_P^n,\ P \in \mathcal P,\ n \in \llbracket 0, N \rrbracket \bigr\}$ and $\bigl\{\bfF_\zeta^n,\ \zeta \in \edgespart,\ n \in \llbracket 0, N-1 \rrbracket \bigr\}$ are related to the unknown $U$.

The consistency of this discrete convection operator is proven in 
\cite[Theorem 2.1]{gal-22-lax}, which also states the assumptions that must be satisfied by the above defined quantities to ensure the consistency of the discrete convection operator  $\mathcal C(U)$. 
Unfortunately we cannot apply it directly in the framework of our paper, since the third assumptions of this theorem involves a normal flux on the faces of the dual mesh, which are in general not defined in the schemes we are studying. 
To overcome this difficulty, we use two intermediate results of \cite[Theorem 2.1]{gal-22-lax}. 
The first one, \cite[Lemma 2.7]{gal-22-lax}, deals with the consistency of the discrete time derivative; it is used in Section \ref{sec:consis} with the dual mesh to show the consistency of the time derivative of the momentum. 
The second one, \cite[Lemma 2.8]{gal-22-lax} deals with the consistency of the nonlinear divergence term in the momentum equation, and is used in Section \ref{sec:consis} with the primal mesh because of the non existence of normal vectors for general dual meshes. 
These two lemmas require the following common assumptions: 
for a sequence $(\mathcal P\exm,\mathcal T\exm)_\mnn$ of space-time discretisations, with $\delta(\mathcal P\exm), \deltat\exm \to 0$ as $m\to +\infty$,  let $(U\exm)_\mnn$ be the associated sequence of discrete functions.
We suppose that the sequence $(U\exm)_\mnn$ is bounded and converges to a limit:
there exists $\ctel{cons-u} \in \xR_+^\ast$  such that 
\begin{subequations}
\label{hyp-lemmas}
 \begin{align} 
\label{lemgen:linfbound} &
     \Vert U\exm \Vert_{L^\infty(\Omega\times(0,T), \xR^p)} \le \cter{cons-u},\ \forall \mnn,
    \\ 
\label{lemgen:l1conv} &
    \exists \ \bar U \in L^\infty(\Omega\times(0,T), \xR^p) \mbox{ s.t. } \Vert U\exm - \bar U \Vert_{L^1(\Omega\times(0,T), \xR^p)} \to 0 \mbox{ as } m \to +\infty.
\end{align}
\end{subequations}

\begin{lemma}[LW-consistency, time derivative] \label{lem:time-cons}
Assuming \eqref{pb-general}, let $(\mathcal P\exm,\mathcal T\exm)_\mnn$ be a sequence of space-time discretisations, with $\delta(\mathcal P\exm)$ and $\deltat\exm$ tending to zero as $m\to +\infty$, and let $(U\exm)_\mnn$ be the associated sequence of discrete functions satisying \eqref{hyp-lemmas}.
Let $U_0 \in L^\infty(\Omega,\xR^p)$  and assume that
  \begin{align} \label{hyp:condi} &
 \sum_{P \in \mathcal P_{\mathrm{int}}\exm}   \int_P |(\beta\exm)_P^0  - \beta(U_0(\bfx)) | \dx  \to 0 \mbox{ as } m \to + \infty,
    \\ \label{hyp:t} &
 \sum_{n=0}^{N_m-1}  \sum_{P \in \mathcal P_{\mathrm{int}}\exm} \int_{t_n}^{t_{n+1}} \int_P |(\beta\exm)_P^n  - \beta(U\exm(\bfx,t)) | \dx \dt \to 0 \mbox{ as } m \to + \infty,
 \end{align}
where $\mathcal P_{\mathrm{int}}\exm$ denotes the set of cells of $ \mathcal P\exm$ that have no face or edge on the boundary $\partial \Omega$.
Then
\begin{multline*}
    \sum_{n=0}^{N\exm-1}  \sum_{P \in \mathcal P\exm }|P|  \left((\beta\exm)_P^{n+1}-(\beta\exm)_P^n\right)\ \varphi_P^n \to - \int_\Omega \beta(U_0)(\bfx)\ \varphi (\bfx,0) \dx
    \\
    - \int_0^T \int_\Omega \beta(\bar U)(\bfx,t)\ \partial_t \varphi(\bfx,t) \dx \dt \quad \mbox{as } m\to +\infty,
\end{multline*}

with
\begin{equation}\label{eqdef:varphiPn}
  \varphi_P^n = \frac{1}{|P|}\int_P \varphi(\bfx,t_n) \dx.
\end{equation}
\end{lemma}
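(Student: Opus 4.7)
The plan is to prove the lemma via Abel summation (discrete integration by parts) in the time variable. Set
\[
S\exm := \sum_{n=0}^{N\exm-1} \sum_{P \in \mathcal P\exm} |P|\, \bigl((\beta\exm)_P^{n+1} - (\beta\exm)_P^n\bigr) \varphi_P^n.
\]
Abel summation rewrites this as
\[
\sum_P |P| \bigl[(\beta\exm)_P^{N\exm} \varphi_P^{N\exm-1} - (\beta\exm)_P^0 \varphi_P^0\bigr] + \sum_{n=1}^{N\exm-1} \sum_P |P|\, (\beta\exm)_P^n \,(\varphi_P^{n-1} - \varphi_P^n).
\]
Since $\varphi \in C_c^\infty(\Omega \times (0,T))$, for $m$ large enough its support lies strictly inside $\Omega \times [2\deltat\exm, T - 2\deltat\exm]$ and avoids the cells touching $\partial\Omega$; hence $\varphi_P^{N\exm-1} = 0$ for all $P$, and all remaining sums can be restricted to $P \in \mathcal P_{\mathrm{int}}\exm$.

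The initial-data term $-\sum_P |P| (\beta\exm)_P^0 \varphi_P^0 = -\sum_P \int_P (\beta\exm)_P^0 \,\varphi(\bfx, 0)\dx$ converges to $-\int_\Omega \beta(U_0)(\bfx) \varphi(\bfx,0)\dx$: the difference is bounded by $\|\varphi\|_\infty \sum_{P \in \mathcal P_{\mathrm{int}}\exm} \int_P \bigl|(\beta\exm)_P^0 - \beta(U_0(\bfx))\bigr| \dx$, which tends to $0$ by hypothesis \eqref{hyp:condi}.

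For the main sum, writing $|P| (\varphi_P^n - \varphi_P^{n-1}) = \int_P \int_{t_{n-1}}^{t_n} \partial_t \varphi(\bfx,t) \dt \dx$ and introducing the piecewise constant reconstruction $\tilde \beta\exm$ defined by $\tilde \beta\exm(\bfx, t) := (\beta\exm)_P^n$ for $\bfx \in P$ and $t \in (t_{n-1}, t_n]$, the sum takes the form $-\int_0^T \int_\Omega \tilde \beta\exm(\bfx, t) \partial_t \varphi(\bfx, t) \dx \dt$. It then suffices to show $\tilde \beta\exm \to \beta(\bar U)$ in $L^1(\Omega \times (0,T))$, which I obtain by combining two ingredients: first, hypothesis \eqref{hyp:t}, which after reindexing $n \mapsto n-1$ and absorbing a single endpoint time-slice of measure $\deltat\exm |\Omega|$ into the uniform $L^\infty$ bound on $\beta(U\exm)$, yields $\|\tilde\beta\exm - \beta(U\exm)\|_{L^1} \to 0$; second, the convergence $\beta(U\exm) \to \beta(\bar U)$ in $L^1$, which follows from the $L^1$ convergence \eqref{lemgen:l1conv}, the uniform $L^\infty$ bound \eqref{lemgen:linfbound}, and the continuity of $\beta$ (hence uniform continuity on bounded sets), via the standard subsequence-extraction plus dominated-convergence argument. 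Multiplying by the bounded $\partial_t \varphi$ and integrating yields the term $-\int_0^T \int_\Omega \beta(\bar U) \partial_t \varphi \dx \dt$, completing the proof.

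The only genuine subtlety is the time-shift between the indexing of $(\beta\exm)_P^n$ in the Abel-summed expression (paired with the interval $(t_{n-1}, t_n)$) and its indexing in hypothesis \eqref{hyp:t} (paired with $(t_n, t_{n+1})$). After the relabelling, the mismatch reduces to contributions from a single slice at one endpoint of total measure $\deltat\exm |\Omega|$; multiplied by the uniform $L^\infty$ bound on $\beta(U\exm)$, this vanishes as $m \to \infty$. Beyond this bookkeeping, and the fact that \eqref{lemgen:l1conv} combined with \eqref{lemgen:linfbound} transports $L^1$-convergence through the continuous nonlinearity $\beta$, the argument is entirely routine.
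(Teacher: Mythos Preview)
The paper does not prove this lemma; it is quoted (with slightly weakened hypotheses) from \cite[Lemma~2.7]{gal-22-lax}. Your overall strategy---Abel summation in time, dispose of the boundary terms via compact support and hypothesis~\eqref{hyp:condi}, rewrite the interior sum as $-\int\tilde\beta\exm\,\partial_t\varphi$, then pass to the limit using~\eqref{hyp:t} and continuity of $\beta$---is the standard one and is sound.

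There is, however, a real gap in the time-shift step. After Abel summation, $(\beta\exm)_P^n$ sits on the interval $(t_{n-1},t_n)$, whereas hypothesis~\eqref{hyp:t} pairs it with $(t_n,t_{n+1})$. You assert that the mismatch ``reduces to contributions from a single slice at one endpoint''; it does not. Writing $\hat\beta\exm=(\beta\exm)_P^n$ on $P\times(t_n,t_{n+1})$ for the reconstruction that actually matches~\eqref{hyp:t}, one has
\[
\|\tilde\beta\exm-\hat\beta\exm\|_{L^1}=\sum_{n}\deltat\exm\sum_P|P|\,\bigl|(\beta\exm)_P^{n+1}-(\beta\exm)_P^n\bigr|,
\]
a sum over \emph{all} time jumps, for which the hypotheses give no control. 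A mere relabelling of the dummy index in~\eqref{hyp:t} cannot move the superscript on $(\beta\exm)_P^{\,\cdot}$ relative to the integration interval. Two easy repairs are available: (i)~use the smoothness of $\varphi$ to replace $\int_{t_{n-1}}^{t_n}\partial_t\varphi$ by $\int_{t_n}^{t_{n+1}}\partial_t\varphi$ at the price of $|P|\,(\deltat\exm)^2\,\|\partial_{tt}\varphi\|_\infty$ per term, hence a total error of order~$\deltat\exm$, which aligns the indices with~\eqref{hyp:t} exactly; or (ii)~observe that $\tilde\beta\exm(\cdot,t)=\hat\beta\exm(\cdot,t+\deltat\exm)$, so~\eqref{hyp:t} actually yields $\|\tilde\beta\exm-\beta\bigl(U\exm(\cdot,\cdot+\deltat\exm)\bigr)\|_{L^1}\to 0$, and then $U\exm(\cdot,\cdot+\deltat\exm)\to\bar U$ in $L^1$ (from~\eqref{lemgen:l1conv} plus continuity of translation in $L^1$ for the fixed limit $\bar U$) lets your own argument on $\beta$ finish. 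Either route closes the gap; the remainder of your proof stands.
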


\begin{lemma}[LW-consistency, space derivative] \label{lem:space-cons}
Assuming \eqref{pb-general}, let $(\mathcal P\exm,\mathcal T\exm)_\mnn$ be a sequence of space-time discretisations, with $\delta(\mathcal P\exm)$ and $\deltat\exm$ tending to zero as $m\to +\infty$, and let $(U\exm)_\mnn$ be the associated sequence of discrete functions safisying \eqref{hyp-lemmas}.
Assume furthermore that \begin{equation}   \label{hyp:x} 
    \sum_{n=0}^{N\exm-1} \sum_{P \in \mathcal P_{\mathrm{int}}\exm} \frac {\diam(P)}{|P|}
    \sum_{\zeta \in \edgespart(P)} |\zeta| \int_{t_n}^{t_{n+1}}  \int_P \Bigl| \Big((\bfF\exm)_{\zeta}^n -\bff\bigl(U^m(\bfx,t)\bigr) \Big) \cdot \bfn_{P,\zeta} \Bigr| \dx \dt \to 0, 
\end{equation}
Then 
\begin{align*}
    \sum_{n=0}^{N\exm-1} \deltat\exm \sum_{P \in \mathcal P\exm }\ \sum_{\zeta \in \edgespart(P)} |\zeta|\ (\bfF\exm)_\zeta^n \cdot \bfn_{P,\zeta}\ \varphi_P^n
    \to - \int_0^T \int_\Omega \bff(\bar U)(\bfx,t) \cdot \gradi \varphi(\bfx,t) \dx \dt \quad \mbox{ as } m\to +\infty,
\end{align*}
with
$\varphi_P^n$ defined by \eqref{eqdef:varphiPn}. 
\end{lemma}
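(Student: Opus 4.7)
The plan is to compare the discrete sum $S^m$ (the left-hand side of the claim) with the continuous quantity
\[
T^m := -\int_0^T\!\!\int_\Omega \bff(U^m(\bfx,t))\cdot\nabla\varphi(\bfx,t)\dx\dt,
\]
and to prove (i) $T^m$ converges to the desired limit, and (ii) the discrepancy $S^m - T^m$ tends to zero. Claim (i) is immediate: by the uniform $L^\infty$-bound \eqref{lemgen:linfbound} and the continuity of $\bff$, the family $\{\bff(U^m)\}$ is bounded in $L^\infty$, and the $L^1$-convergence $U^m \to \bar U$ in \eqref{lemgen:l1conv} (implying pointwise a.e.\ convergence along a subsequence) yields, via dominated convergence against the bounded, compactly supported $\nabla\varphi$, that $T^m \to -\int_0^T\!\!\int_\Omega \bff(\bar U)\cdot \nabla\varphi \dx \dt$.

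The heart of the argument is to rewrite both $S^m$ and $T^m$ as weighted sums over faces of a flux-type quantity times the \emph{test-function increment} $\varphi_P^n - \tilde\varphi_\zeta^n$, where $\tilde\varphi_\zeta^n$ denotes the mean of $\varphi(\cdot,t_n)$ over $\zeta$. For $S^m$, since the flux $(\bfF^m)_\zeta^n$ is shared by the two neighbors of an internal face (conservativity) and $\varphi$ vanishes near $\partial\Omega$ for $m$ large, inserting $\tilde\varphi_\zeta^n$ and summing over cells makes the face-centered contribution cancel face by face, giving
\[
S^m = \sum_{n=0}^{N_m-1}\deltat^m \sum_P\sum_{\zeta\in\edgespart(P)} |\zeta|\,(\bfF^m)_\zeta^n\cdot\bfn_{P,\zeta}\,(\varphi_P^n - \tilde\varphi_\zeta^n).
\]
For $T^m$, Green's formula on each cell gives $\int_P \nabla\varphi(\cdot,t)\dx = \sum_\zeta |\zeta|\,\tilde\varphi_\zeta(t)\,\bfn_{P,\zeta}$; combined with $\sum_\zeta|\zeta|\bfn_{P,\zeta} = \mathbf{0}$, which allows one to subtract any cell-constant from $\tilde\varphi_\zeta$, this produces the same face-increment structure with the flux $(\bfF^m)_\zeta^n$ replaced by the cell--time mean $\bar\bff_P^n := \frac{1}{|P|\deltat^m}\int_{t_n}^{t_{n+1}}\!\int_P \bff(U^m)\dx\dt$, up to a remainder $R^m$ collecting (a) the in-cell oscillation $\bff(U^m) - \bar\bff_P^n$, and (b) the time-smoothing error from replacing $\varphi(\cdot,t)$ by $\varphi(\cdot,t_n)$. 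By continuity of $\bff$, the $L^\infty$-bound \eqref{lemgen:linfbound}, the $L^1$-convergence \eqref{lemgen:l1conv}, and the smoothness of $\varphi$, both pieces of $R^m$ tend to zero.

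Subtracting these expressions yields
\[
S^m - T^m + R^m = \sum_n \deltat^m \sum_P\sum_\zeta |\zeta|\,\bigl((\bfF^m)_\zeta^n - \bar\bff_P^n\bigr)\cdot\bfn_{P,\zeta}\,(\varphi_P^n - \tilde\varphi_\zeta^n).
\]
Since $\varphi \in C_c^\infty$ gives $|\varphi_P^n - \tilde\varphi_\zeta^n| \le C\,\|\nabla\varphi\|_\infty\,\diam(P)$, and since $|\zeta|\deltat^m = \frac{|\zeta|}{|P|}\int_{t_n}^{t_{n+1}}\!\int_P \dx\dt$, the right-hand side is bounded by
\[
C\|\nabla\varphi\|_\infty \sum_n \sum_P \frac{\diam(P)}{|P|}\sum_\zeta|\zeta|\int_{t_n}^{t_{n+1}}\!\int_P \Bigl|\bigl((\bfF^m)_\zeta^n - \bff(U^m(\bfx,t))\bigr)\cdot\bfn_{P,\zeta}\Bigr|\dx\dt,
\]
which vanishes by the consistency hypothesis \eqref{hyp:x}. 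Combining with $T^m \to -\int_0^T\!\!\int_\Omega \bff(\bar U)\cdot\nabla\varphi$ and $R^m\to 0$ concludes the proof. The main obstacle will be the clean rewriting of $T^m$ in face-increment form when $U^m$ is not piecewise constant on cells and time steps: one must carefully isolate the in-cell oscillation $\bff(U^m) - \bar\bff_P^n$ as an independent error and show that it vanishes in the limit, which is precisely why \eqref{hyp:x} is phrased as a pointwise $(\bfx,t)$-integral of the flux discrepancy rather than as a difference of cell averages.
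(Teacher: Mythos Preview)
The paper does not actually prove this lemma: it is stated in the appendix as a result quoted from \cite[Lemma 2.8]{gal-22-lax} and is used as a black box in the proof of Theorem~\ref{theo:cons}. There is therefore no paper-side argument to compare against.

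Your argument is correct and follows the standard route for such Lax--Wendroff consistency results (discrete integration by parts via conservativity on the discrete side, Green's formula cell by cell on the continuous side, then control of the difference by the face-increment $|\varphi_P^n-\tilde\varphi_\zeta^n|\le C\,\diam(P)$, which produces exactly the weight $\diam(P)/|P|$ appearing in \eqref{hyp:x}). Two small points are worth tightening. First, for the in-cell oscillation part of $R^m$: since $\bar\bff_P^n$ is the mean of $\bff(U^m)$ over $P\times(t_n,t_{n+1})$, the quantity $\int_{t_n}^{t_{n+1}}\!\int_P(\bff(U^m)-\bar\bff_P^n)\cdot\bfc\,\dx\dt$ vanishes for any constant vector $\bfc$; subtracting the cell--time mean of $\nabla\varphi$ then makes the test-function factor $O(\diam(P)+\deltat^m)$ while the oscillation factor stays bounded by $2\sup_{|v|\le\cter{cons-u}}|\bff(v)|$, so this remainder is $O(\delta(\mathcal P^m)+\deltat^m)$; the $L^1$-convergence \eqref{lemgen:l1conv} is not needed here. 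Second, the hypothesis \eqref{hyp:x} sums only over $\mathcal P_{\mathrm{int}}^m$: this is harmless because, for $m$ large, $\varphi$ vanishes on every cell touching $\partial\Omega$ and on all its faces, so your sums over all $P$ are in fact sums over interior cells only.
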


If both the assumptions of Lemma \ref{lem:time-cons} and Lemma \ref{lem:space-cons} are satisfied, then the convergence of the weak form of the whole discrete convection operator is ensured, see \cite[Theorem 2.1]{gal-22-lax}. 
However, in our setting, the assumption \eqref{hyp:x} of Lemma \ref{lem:space-cons} cannot be satisfied on  virtual dual meshes, so that Lemma \ref{lem:time-cons} is used on the dual mesh (it does not necessitate the knowledge of the boundaries of the dual cells) while Lemma \ref{lem:space-cons} is used on the primal mesh, for which the assumption \ref{hyp:x} can be checked.
%
\subsection{A bound of discrete translates of discrete functions}

The following result is a consequence of \cite[Lemma A.1]{gal-22-lax}.
It features a mesh $\mathcal P$ which can be either the primal mesh, the dual mesh, or a mesh constructed from the edges of the dual mesh.
For $u \in L^1(\Omega \times (0,T))$, $P \in \mathcal P$ and $n$ such that $n \in \llbracket 0, N-1 \rrbracket$, let $u_P^n$ be the mean value of $u$ over $P \times (t_n,t_{n+1})$.
Let $T_{\mathcal P}\, u$ be defined by
\begin{equation}\label{transT-u_gene}
    T_{\mathcal P}\, u= \sum_{n=0}^{N-1} \deltat \sum_{\edge = P|Q \in \mathcal \edges(\mathcal P)} \omega_{P,Q}\ |u^{n+1}_Q - u^{n+1}_P|
\end{equation}
where $(\omega_{P,Q})_{\edge = P|Q \in \mathcal \edges(\mathcal P)}$ is a set of non-negative weights.
We introduce the two following parameter:
\begin{equation} \label{eq:reg_gene}
    \begin{array}{l} \displaystyle
        \theta_{\mathcal P} = \max_{P \in \mathcal P} \frac 1 {|P|}\ \sum_{\substack{Q \in \mathcal P\\ \{P,Q\} \in \mathcal S_x}} \omega_{P,Q}.
    \end{array}
\end{equation}
 
 \begin{theorem}\label{thm:trans_bound}
    Let $(\mathcal P\exm)_{m \in \xN}$  be a given sequence of meshes with $h\exm = \max_{K \in \mathcal P\exm} \to 0$ as $m \to +\infty$, and let $\deltat\exm \to 0 $as $m \to +\infty$.
    Let us suppose that there exists $\theta > 0$ such that $\theta_{\mathcal P\exm} \le \theta$ for all $m \in \xN$, with $\theta_{\mathcal P\exm}$  by Equation \eqref{eq:reg_gene}.
     
    Let $u \in L^1(\Omega\times(0,T))$ and $(u_p)_{p \in \xN}$ be a sequence of functions of $L^1(\Omega\times(0,T))$ such that $u_p \to u$ in $L^1(\Omega\times(0,T))$ as $p \to +\infty$.\\[0.5ex]
    Then $T_{\mathcal P\exm}\, u_p$ defined by \eqref{transT-u_gene} tends to zero when $m$ tends to $+\infty$ uniformly with respect to $p \in \xN$.
\end{theorem}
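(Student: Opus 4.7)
The plan is to follow the classical Lax-Wendroff density argument: first establish that $T_{\mathcal P^{(m)}}$ is sub-linear on $L^1$, then show it vanishes as $m \to \infty$ on smooth test functions, and finally combine the two via approximation, with special care for extracting the uniformity in $p$.

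First I would prove a sub-linear $L^1$ bound: for any $v \in L^1(\Omega\times(0,T))$,
\begin{align*}
T_{\mathcal P^{(m)}}\,v \le \theta\, \|v\|_{L^1(\Omega\times(0,T))}.
\end{align*}
The triangle inequality $|v_Q^{n+1} - v_P^{n+1}| \le |v_P^{n+1}| + |v_Q^{n+1}|$ together with the symmetry $\omega_{P,Q} = \omega_{Q,P}$ rewrites the face-sum as an ordered cell-sum $\sum_P |v_P^{n+1}| \sum_{Q:\{P,Q\}\in\mathcal S_x}\omega_{P,Q}$, which by the definition \eqref{eq:reg_gene} of $\theta_{\mathcal P^{(m)}}$ is bounded by $\theta \sum_P |P|\,|v_P^{n+1}|$. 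Jensen's inequality applied to the space-time averages $v_P^{n+1}$ then yields the announced bound. Sub-additivity $T_{\mathcal P^{(m)}}(v+w)\le T_{\mathcal P^{(m)}}v + T_{\mathcal P^{(m)}}w$ follows from the triangle inequality inside the definition of $T_{\mathcal P^{(m)}}$.

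Next, for a test function $\varphi \in C_c^\infty(\Omega\times(0,T))$ with spatial Lipschitz constant $L$, adjacent cells $P,Q$ satisfy $|\varphi_P^{n+1} - \varphi_Q^{n+1}| \le L(\diam(P) + \diam(Q)) \le 2L\,\delta(\mathcal P^{(m)})$; inserting into the definition of $T_{\mathcal P^{(m)}}$ and bounding the face-sum as above produces
\begin{align*}
T_{\mathcal P^{(m)}}\,\varphi \le 2L\, \theta\, |\Omega|\, T\, \delta(\mathcal P^{(m)}),
\end{align*}
which tends to zero as $m \to +\infty$.

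The final step glues these estimates via density. Given $\varepsilon > 0$, fix $\varphi \in C_c^\infty$ with $\|u - \varphi\|_{L^1} \le \varepsilon/(4\theta)$, and choose $p_0$ such that $\|u_p - u\|_{L^1} \le \varepsilon/(4\theta)$ for all $p \ge p_0$. For $p \ge p_0$, sub-additivity and the $L^1$ bound give $T_{\mathcal P^{(m)}} u_p \le T_{\mathcal P^{(m)}}\varphi + \theta(\|u-\varphi\|_{L^1}+\|u_p-u\|_{L^1}) \le T_{\mathcal P^{(m)}}\varphi + \varepsilon/2$. For each $p < p_0$ (a finite set), pick an individual smooth approximant $\varphi_p$ with $\|u_p - \varphi_p\|_{L^1} \le \varepsilon/(2\theta)$, so that $T_{\mathcal P^{(m)}} u_p \le T_{\mathcal P^{(m)}} \varphi_p + \varepsilon/2$. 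Since the second step ensures $T_{\mathcal P^{(m)}}$ vanishes on each of the finitely many smooth functions $\varphi, \varphi_1, \ldots, \varphi_{p_0-1}$, a single $m_0$ can be chosen so that for $m \ge m_0$ all of these quantities are less than $\varepsilon/2$, yielding $T_{\mathcal P^{(m)}} u_p < \varepsilon$ uniformly in $p$. The main obstacle is precisely this uniformity: a naive density argument applied to each $u_p$ would only give convergence for fixed $p$. The hypothesis $u_p \to u$ in $L^1$ is what permits the split into a uniform tail (handled by a single smooth approximation of the common limit $u$) and a finite head (handled individually), making the overall bound uniform in $p$.
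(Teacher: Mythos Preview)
Your argument is correct. The paper itself does not supply a proof of this theorem: it merely records the statement and says it is a consequence of \cite[Lemma~A.1]{gal-22-lax}. Your three-step strategy (sub-linear $L^1$ bound via the regularity parameter $\theta_{\mathcal P}$, vanishing on smooth functions via a Lipschitz estimate times $\delta(\mathcal P^{(m)})$, then density with a tail/finite-head split to obtain uniformity in $p$) is precisely the standard route and is, in substance, what the cited lemma does. One cosmetic remark: in your Step~2 the face sum $\sum_{\sigma=P|Q}\omega_{P,Q}$ equals $\tfrac12\sum_P\sum_{Q}\omega_{P,Q}\le\tfrac12\theta|\Omega|$, so the constant in front of $\delta(\mathcal P^{(m)})$ can be taken as $L\theta|\Omega|T$ rather than $2L\theta|\Omega|T$, but this changes nothing in the argument.
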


\section*{Acknowledgements}
We are grateful to K\'evin Gantheil who contributed to the numerical results.

\bibliographystyle{abbrv}
\bibliography{bib}

\end{document}